\newtheorem{theorem}{Theorem}[section]
\newtheorem{lemma}[theorem]{Lemma}
\newtheorem{corollary}[theorem]{Corollary}
\theoremstyle{definition}
\theoremstyle{remark}
\newtheorem{remark}[theorem]{Remark}
\newcommand{\dif}{\mathop{}\!\mathrm{d}}  
\numberwithin{equation}{section}
\def\p{\partial}
\def\N{\mathbb N}
\def\R{\mathbb R}
\DeclareMathOperator*{\argmin}{arg\,min}
\newcommand{\dx}{\, \dif x}
\newcommand{\ds}{\, \dif s}
\begin{document}

\title[{A Computational Method for the Inverse Robin Problem}]{A Computational Method for the Inverse Robin Problem with Convergence Rate}

\author{Erik Burman}
\address{Department of Mathematics, University College London, 807b Gower Street, London, WC1E
6BT, United Kingdom}
\curraddr{}
\email{e.burman@ucl.ac.uk}
\thanks{E.B. was supported by the EPSRC grants EP/T033126/1 and
  EP/V050400/1. For the purpose of open access, the author has applied a Creative Commons Attribution (CC BY) licence to any Author Accepted Manuscript version arising.}

\author{Marvin Kn\"oller}
\address{Department of Mathematics and Statistics, University of Helsinki, P.O 68, 00014, Helsinki,
Finland}
\curraddr{}
\email{marvin.knoller@helsinki.fi}
\thanks{M.K. was supported by the Research Council of Finland (Flagship of Advanced
Mathematics for Sensing, Imaging and Modelling grant 359182).}

\author{Lauri Oksanen}
\address{Department of Mathematics and Statistics, University of Helsinki, P.O 68, 00014, Helsinki,
Finland}
\curraddr{}
\email{lauri.oksanen@helsinki.fi}
\thanks{L.O. was supported by the European
Research Council of the European Union, grant 101086697 (LoCal), and the Reseach Council of
Finland, grants 347715, 353096 and 359182. Views and opinions expressed are those of the authors only and do not necessarily reflect
those of the European Union or the other funding organizations.}

\subjclass[2020]{
65N21, 
65N12
}
\date{}

\begin{abstract}
The inverse Robin problem covers the determination of the Robin parameter in an elliptic partial differential equation posed on a domain $\Omega$.
Given the solution of the Robin problem on a subdomain $\omega \subset \Omega$ together with the elliptic problem's right hand sides, the aim is to solve this inverse Robin problem numerically.
In this work, a computational method for the reconstruction of the Robin parameter inspired by a unique continuation method is established. 
The proposed scheme relies solely on first-order Lagrange finite elements ensuring a straightforward implementation.
Under the main assumption that the Robin parameter is in a finite dimensional space of continuously differentiable functions
it is shown that the numerical method is second order convergent in the finite element's mesh size.
For noisy data this convergence rate is shown to hold true until the noise term dominates the error estimate.
Numerical experiments are presented that highlight the feasibility of the Robin parameter reconstruction and that confirm the theoretical convergence results numerically.

\end{abstract}
\maketitle

\section{Introduction}
The unique continuation principle asserts that the knowledge of an elliptic partial differentiable equation's solution on an open set $\omega \subset \Omega$ is enough to extend it to its entire underlying domain $\Omega$ uniquely.
When linear Robin boundary conditions are imposed on $\partial \Omega$, the Robin parameter can additionally be recovered theoretically under the assumption that the elliptic problem's solution does not vanish on an open subset of the boundary and that the Robin parameter is continuous.
In practice, however, the reconstruction of the Robin parameter is more delicate. 
The introduction of finite dimensional function spaces breaks down arguments about the identification of the Robin parameter, which previously were valid on a continuum level.
The purpose of this work is to introduce and analyze a Newton method for recovering the Robin parameter numerically based on first order Lagrange finite elements only.
Our main result shows that the proposed method converges quadratically in the finite element's mesh size to the true Robin parameter under the assumption that the Robin parameter lies in a known finite dimensional space of continuously differentiable functions and that the corresponding boundary problem's solution vanishes at most in isolated points on $\partial \Omega$. 
Furthermore we show that when the given data on the subdomain $\omega$ is perturbed by some noise, then the quadratic convergence holds true until the noise term dominates the error bound.

The identification of the Robin coefficient in a second order elliptic problem using measured data is an important problem in many applications such as corrosion detection \cite{Kaup1995}.
Early stability estimates of logarithmic type were proposed in \cite{CFJL04, Chou04} and, for finite dimensional spaces, under the assumption that the Robin coefficient can be expressed as a piecewise constant function, in \cite{Sin07}. The inverse problem for the Robin coefficient is intricately linked to the unique continuation problem, which was used for its solution in \cite{Inglese1997}. The quasi reversibility method \cite{LattesLions1969} was applied to the reconstruction of the Robin coefficient in \cite{FI99}. More generally the problem is typically recast as an optimization problem or a least squares problem \cite{CJ99}.  
The regularization of such systems and their solution using conjugate gradient methods was investigated in \cite{Jin07,JZ09,JZ10}.  
Another approach is to apply the Kohn-Vogelius penalty method \cite{KohnVogelius1984}. It was first applied in this context in \cite{CEJ04} and more recently, providing error estimates in the finite dimensional case, in \cite{BurCenJinZhou25}. The questions of uniqueness and stability and global convergence have been studied in \cite{Har21,Har19}. All approximation methods based on control or least squares suffer from the potential existence of local minima. Recently, in \cite{Har22}, an approach to avoid this problem was proposed by recasting the problem into a convex
              non-linear semidefinite programming problem. Finally we can point out that recently also Bayesian methods have been applied for the solution of the identification problem \cite{RSGK24}.
                  
Computational unique continuation was studied for Poisson's problem with Dirichlet and Neumann conditions in \cite{BurOks24} and \cite{BurOksZhi25}, respectively.
In both works the assumption that the respective trace of the solution lies in a finite dimensional space plays an essential role.
By using tailored stability results and by studying variational formulations augmented by some additional stabilization terms, the respective unique continuation problems are approached numerically.

In this work we revisit the control formulation for the reconstruction of a smooth Robin coefficient lying in a finite dimensional space. Drawing on ideas for the finite element approximation of unique continuation problems with finite dimensional trace \cite{BurOks24} we derive a new Lipschitz stability estimate with additional Robin-type constraint. 
Then this estimate is used in the framework of \cite{keller75} to prove a priori error estimates for discrete solutions and second order convergence of the Newton method for initial guesses close enough to the discrete solution.

This work is structured as follows.
In the second section we introduce the Robin problem on polygonal domains and recall well-posedness results. 
We introduce the inverse Robin problem and motivate the function that needs to vanish in order to reconstruct the Robin parameter.
In the third section we recall finite element error bounds and study linearizations of finite element solutions to Robin type problems with respect to the Robin parameter.
The fourth section combines our finite element bounds with the work \cite{keller75} to derive convergence rates and stability of the Newton algorithm.
Section five is about the perturbation analysis, where the measured data is a noisy version of a Robin boundary problem solution.
Finally, in section six, numerical examples are presented. These examples highlight the efficacy of the proposed method and confirm the convergence rates numerically.

\section{The Robin problem on polygonal domains}\label{sec:robinintro}
Let $\Omega \subset \R^2$ be an open, bounded curvilinear
and convex domain of class $C^{1,1}$.
Precisely, we let the boundary $\partial \Omega$ of the domain $\Omega$ be defined by
\begin{align}\label{eq:omegadecomp}
\partial \Omega \, = \, \bigcup_{j=1}^N \overline{(\partial \Omega)_j}\, 
\end{align}
for some $N \in \N$, where each $\overline{(\partial \Omega)_j}$ is a $C^{1,1}$ curve with interior $(\partial \Omega)_j$. We assume that the curve $\overline{(\partial \Omega)_{j+1}}$ follows $\overline{(\partial \Omega)_j}$ according to the positive orientation inherited by $\partial \Omega$.
Functions in Sobolev spaces $H^s(\partial \Omega)$, $s \ge 0$, can be interpreted piecewise as functions in $H^s((\partial \Omega)_j)$ for $j=1,\dots, N$.

Let $f \in (H^{1}(\Omega))'$ and $g\in H^{-1/2}(\partial \Omega)$.
Our considerations center around the Robin-type boundary value problem, which is to determine the (weak) solution
$u \in H^1(\Omega)$ that fulfills
\begin{subequations}\label{eq:robin-gen}
\begin{align}
\Delta u \, &= \, f \quad \text{in } \Omega \, ,\\
\partial_\nu u + au \, &= \, g \quad \text{on } \partial \Omega \, , \label{eq:robin-gen-bd}
\end{align}
\end{subequations}
where $a \in C^1(\partial \Omega)$ in \eqref{eq:robin-gen-bd} denotes the Robin coefficient.
Throughout this paper we assume that $a$ lies in the finite dimensional subspace $V_J \subset C^1(\partial \Omega)$ that is spanned by the functions $\phi_1, \dots, \phi_J$ for some $J\in \N$.
In addition, we impose the standing assumption that $0<a_0 \leq a$.
The variational formulation to \eqref{eq:robin-gen}
is to find $u \in H^1(\Omega)$ such that 
\begin{align*}
b(u,v) \,=\, \ell_{f,g}(v)\quad \text{for all } v \in H^1(\Omega)\, , 
\end{align*}
where $b: H^1(\Omega) \times H^1(\Omega) \to \R$ and $\ell_{f,g}: H^1(\Omega)\to \R$ with
\begin{subequations}\label{eq:bandf}
\begin{align}
b(u,v) \, &= \, \int_\Omega \nabla u \cdot \nabla v \dx + \int_{\partial \Omega} a u v \ds \, ,\\ 
\ell_{f,g}(v) \, &= \, \int_{\partial \Omega} gv \ds-\int_\Omega fv \dx \, .
\end{align}
\end{subequations}
Due to the positivity of $a$, existence and uniqueness of the weak solution $u\in H^1(\Omega)$ follows by the Lax--Milgram theorem. 
This is done in detail, e.g., in \cite[Prop.\@ 31.15]{ErnGue21}, where also the $H^1(\Omega)$ coercivity for $b$
\begin{align}\label{eq:coerc}
\Vert v \Vert_{H^1(\Omega)}^2 \, \lesssim \, b(v,v) \quad \text{for all } v \in H^1(\Omega) 
\end{align}
is proven.
Moreover, the Lax--Milgram theorem provides the bound
\begin{align}\label{eq:wp-bound}
\Vert u \Vert_{H^1(\Omega)} \, \lesssim \, \Vert f \Vert_{(H^{1}(\Omega))'} + \Vert g \Vert_{H^{-1/2}(\partial \Omega)}\, ,
\end{align}
where the implicit constant depends on $\Omega$ and $a_0$ only.
Under the given assumptions on the domain $\Omega$ and additional smoothness requirements on $f$ and $g$ the weak solution to \eqref{eq:robin-gen} is in $H^2(\Omega)$.
This result is proven in \cite[Cor.\@ 3.1]{Mgha92}.
We formulate this result for the convenience of the reader in the upcoming lemma. 
\begin{lemma}\label{lem:H2reg}
Let $f \in L^2(\Omega)$ and let $g \in H^{1/2}(\partial \Omega)$. 
Then, the weak solution to \eqref{eq:robin-gen}
satisfies $u \in H^2(\Omega)$ as well as the bound
\begin{align}\label{eq:uH2bound}
\Vert u \Vert_{H^2(\Omega)} \, \lesssim \, \Vert f \Vert_{L^2(\Omega)} + \Vert g \Vert_{H^{1/2}(\partial \Omega)}\, ,
\end{align}
where the implicit constant depends on $a$.
\end{lemma}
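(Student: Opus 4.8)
The plan is to read Lemma~\ref{lem:H2reg} as the classical $H^2$ a priori estimate for the Neumann Laplacian applied to a reformulation of the Robin problem, and then to strip off the resulting lower-order term using the well-posedness bound \eqref{eq:wp-bound}. First I would rewrite the boundary condition \eqref{eq:robin-gen-bd} as a Neumann condition with modified data,
\[
\partial_\nu u \,=\, \hat g \,:=\, g - a\,(u|_{\partial\Omega}) \quad\text{on } \partial\Omega .
\]
Since $u \in H^1(\Omega)$, the trace theorem gives $u|_{\partial\Omega} \in H^{1/2}(\partial\Omega)$ with $\Vert u|_{\partial\Omega}\Vert_{H^{1/2}(\partial\Omega)} \lesssim \Vert u\Vert_{H^1(\Omega)}$, and since multiplication by the Lipschitz (indeed $C^1$) function $a$ is bounded on $H^{1/2}(\partial\Omega)$, the datum $\hat g$ lies in $H^{1/2}(\partial\Omega)$ with $\Vert \hat g\Vert_{H^{1/2}(\partial\Omega)} \lesssim \Vert g\Vert_{H^{1/2}(\partial\Omega)} + \Vert a\Vert_{C^1(\partial\Omega)}\Vert u\Vert_{H^1(\Omega)}$. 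Thus $u$ is the weak solution of the Neumann problem $\Delta u = f$ in $\Omega$, $\partial_\nu u = \hat g$ on $\partial\Omega$, with $L^2$ interior data and $H^{1/2}$ boundary data.

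The second ingredient would be the $H^2$ a priori estimate for this Neumann problem on the $C^{1,1}$ domain $\Omega$, namely $u \in H^2(\Omega)$ together with
\[
\Vert u\Vert_{H^2(\Omega)} \,\lesssim\, \Vert f\Vert_{L^2(\Omega)} + \Vert \hat g\Vert_{H^{1/2}(\partial\Omega)} + \Vert u\Vert_{H^1(\Omega)},
\]
the last term being unavoidable because constants solve the homogeneous Neumann problem. Inserting the bound on $\Vert\hat g\Vert_{H^{1/2}(\partial\Omega)}$ and then invoking \eqref{eq:wp-bound} together with the trivial embeddings $\Vert f\Vert_{(H^1(\Omega))'} \le \Vert f\Vert_{L^2(\Omega)}$ and $\Vert g\Vert_{H^{-1/2}(\partial\Omega)} \lesssim \Vert g\Vert_{H^{1/2}(\partial\Omega)}$ would absorb $\Vert u\Vert_{H^1(\Omega)}$ and give \eqref{eq:uH2bound}, the implicit constant depending on $\Omega$, on $a_0$ through \eqref{eq:coerc}--\eqref{eq:wp-bound}, and on $\Vert a\Vert_{C^1(\partial\Omega)}$ through the multiplier bound and the Neumann estimate.

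To justify the Neumann $H^2$ estimate on a merely $C^{1,1}$ domain --- which is where the geometric regularity is consumed --- I would argue as usual. Interior regularity $u \in H^2_{\mathrm{loc}}(\Omega)$ is standard; near $\partial\Omega$ one localizes with a finite partition of unity and flattens the boundary in each chart by a $C^{1,1}$ diffeomorphism, so that the variational identity becomes $\int_{B^+} A\nabla\tilde u\cdot\nabla\tilde v \,dx = \int_{B^+}\tilde f\,\tilde v\,dx + \int_\Gamma G\,\tilde v\,ds$ on a half-ball $B^+$ with flat part $\Gamma$, where $A$ is symmetric, uniformly elliptic and only $W^{1,\infty}$, and $\Vert G\Vert_{H^{1/2}(\Gamma)} \lesssim \Vert\hat g\Vert_{H^{1/2}(\partial\Omega)}$. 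Testing with $v = D_k^{-h}\bigl(\chi^2 D_k^h\tilde u\bigr)$ for a cutoff $\chi$ and a difference quotient $D_k^h$ in a direction $k$ tangential to $\Gamma$, and using the discrete product and summation-by-parts rules, the coercivity of $A$, and $\Vert D_k^h A\Vert_{L^\infty} \le \Vert A\Vert_{W^{1,\infty}}$, one gets after a Young's inequality an $h$-uniform bound on $\Vert\chi D_k^h\nabla\tilde u\Vert_{L^2}$ in terms of $\Vert\tilde f\Vert_{L^2}$, $\Vert G\Vert_{H^{1/2}(\Gamma)}$ and $\Vert\tilde u\Vert_{H^1}$. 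Letting $h \to 0$ yields $\partial_k\nabla\tilde u \in L^2$ for every tangential $k$, and the only remaining second derivative, the purely normal one, is read off from $\operatorname{div}(A\nabla\tilde u) = \tilde f$, which is legitimate since $A \in W^{1,\infty}$ and the normal coefficient is bounded below. Transporting back and summing over the partition of unity completes the estimate.

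The main obstacle will be exactly this limited geometric regularity: with $\partial\Omega$ only $C^{1,1}$ the flattened coefficients are merely Lipschitz and cannot be differentiated, so one must commit to Nirenberg's difference-quotient method and check carefully that the commutator contributions (those carrying a factor $D_k^h A$, and, if one prefers to keep the Robin term rather than pass to a Neumann condition, a factor $D_k^h a$) are genuinely lower order, and that the boundary datum costs no more than its $H^{1/2}$ norm. A minor point to dispatch is that the junctions of the arcs $\overline{(\partial\Omega)_j}$ in \eqref{eq:omegadecomp} are not genuine corners --- the boundary is globally $C^{1,1}$ --- so no corner-singularity correction appears and \eqref{eq:omegadecomp} plays no role here. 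Alternatively, one may simply verify that the hypotheses of \cite[Cor.\@ 3.1]{Mgha92} hold and quote that result directly.
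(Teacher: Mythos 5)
Your strategy for the quantitative bound --- rewrite the Robin condition as a Neumann condition with datum $\hat g = g - a\,(u|_{\partial\Omega}) \in H^{1/2}(\partial\Omega)$, invoke an $H^2$ a priori estimate carrying a lower-order term $\Vert u\Vert_{H^1(\Omega)}$, and then eliminate that term via \eqref{eq:wp-bound} --- is sound and genuinely different from the paper's. The paper does not track constants at all: it quotes \cite{Mgha92} only for the qualitative statement $u\in H^2(\Omega)$, and then obtains \eqref{eq:uH2bound} ``for free'' by noting that $Tu=(\Delta u,\partial_\nu u+au)$ is a continuous bijection from $H^2(\Omega)$ onto $L^2(\Omega)\times H^{1/2}(\partial\Omega)$ (continuity from the trace theorems of \cite{Gris85}, bijectivity from well-posedness together with the quoted regularity) and applying the bounded inverse theorem. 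Your route is more explicit and makes the dependence on $\Vert a\Vert_{C^1(\partial\Omega)}$ and $a_0$ visible; either is acceptable once the qualitative $H^2$ regularity is established.

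The genuine gap lies in your self-contained justification of that regularity, specifically in the sentence dismissing \eqref{eq:omegadecomp}. The domain here is a convex \emph{curvilinear polygon} of class $C^{1,1}$: the boundary is a union of $C^{1,1}$ arcs that are allowed to meet at corners --- and in this paper they do, since Section 3 and all the numerics work on $\Omega=[0,1]^2$; the decomposition \eqref{eq:omegadecomp} and the piecewise trace operators $\partial_{\nu_j}$, $\gamma_j$ exist precisely to accommodate those junctions. At a corner there is no $C^{1,1}$ boundary-flattening chart, so the localization/difference-quotient argument you sketch proves $H^2$ regularity only away from the corner points. Regularity up to a convex corner with Neumann/Robin data is true but is not a perturbation argument: it requires the Grisvard--Mghazli corner analysis, where one checks that the corner singular functions (behaving like $r^{\pi/\text{angle}}$) lie in $H^2$ precisely because convexity forces every interior angle to be less than $\pi$. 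This is why the paper defers to \cite[Cor.\@ 3.1]{Mgha92}, and your own closing fallback sentence does the same; with that citation replacing the flattening argument, your proof is complete.
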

\begin{proof}
We refer to \cite{Mgha92} for the full details on the proof that $u\in H^2(\Omega)$.
In fact, the result is formulated for domains $\Omega$, for which $\partial \Omega$ can be decomposed as in \eqref{eq:omegadecomp} and for functions solving
\begin{subequations}\label{eq:robin-gen3}
\begin{align}
\Delta u \, &= \, f \quad \text{in } \Omega \, ,\\
\partial_{\nu_j} u + au \, &= \, g_j \quad \text{on }  (\partial \Omega)_j \; \text{ for all } j=1,\dots,N \, ,
\end{align}
\end{subequations}
where $\nu_j$ denotes the exterior normal to the segment $(\partial \Omega)_j$ and $g_j \in H^{1/2}((\partial \Omega)_j)$.
To obtain the bound in \eqref{eq:uH2bound}, we observe that the operator 
\begin{align*}
T : H^2(\Omega) \to L^2(\Omega)\times H^{1/2}(\Gamma)\, \quad Tu = (\Delta u, \partial_\nu u + au)
\end{align*}
has a closed graph, since,
by \cite[Thm.\@ 1.5.2.1]{Gris85}, the traces 
\begin{align*}
\partial_{\nu_j} : H^2(\Omega) \to H^{1/2}((\partial \Omega)_j)\quad \text{and} \quad \gamma_j : H^1(\Omega) \to H^{1/2}((\partial \Omega)_j)
\end{align*}
are continuous.
Therefore $T$ is continuous.
Furthermore, due to the well-posedness of \eqref{eq:robin-gen3}, the operator $T$ is bijective.
The bounded inverse theorem now implies that $T^{-1}$ is bounded. This yields the bound in \eqref{eq:uH2bound}.
\end{proof}
Let $\omega \subset \Omega$ be a non-empty, open set.
To highlight the dependence of a function $u \in H^1(\Omega)$ solving \eqref{eq:robin-gen} on the Robin parameter $a \in V_J$ we write $u^{(a)}$, whenever this is relevant.
Suppose that we know $f \in L^2(\Omega)$, $g \in H^{1/2}(\partial \Omega)$ as well as $q=u^{(\tilde{a})}|_\omega \in L^2(\Omega)$, where $u^{(\tilde{a})} \in H^1(\Omega)$ solves
\begin{subequations}\label{eq:robin-gen-ex}
\begin{align}
\Delta u^{({a})} \, &= \, f \quad \text{in } \Omega \, ,\\
\partial_\nu u^{({a})} + {a}u^{({a})} \, &= \,  g  \quad \text{on } \partial \Omega \, 
\end{align}
\end{subequations}
with $a$ replaced by $\tilde{a}$. Additionally, assume that 
$u^{(\tilde{a})}$ vanishes at most at isolated points on $\partial \Omega$.
Our intention is to reconstruct the Robin parameter $\tilde{a}$ from the given data $(f, g, q) \in L^2(\Omega) \times H^{1/2}(\partial \Omega) \times H^1(\omega)$.
A sketch of the geometry together with the previously defined functions is found in Figure~\ref{fig:geom}.
\begin{figure}[t!]
\centering 
\includegraphics[scale=.35]{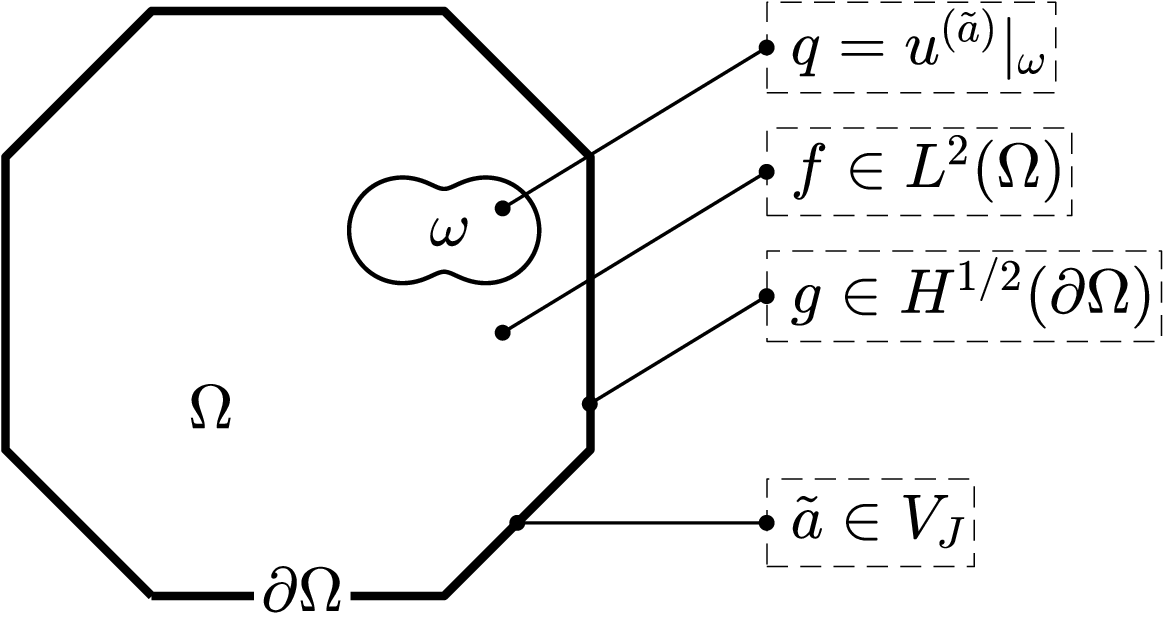}
\caption{Sketch of the geometrical configuration and overview of the functions. For given $q=u^{(\tilde{a})}|_\omega \in L^2(\Omega)$, $f \in L^2(\Omega)$ and $g \in H^{1/2}(\partial \Omega)$ the aim is to reconstruct $\tilde{a} \in V_J$.}
\label{fig:geom}
\end{figure}

For motivational purposes we first suppose that $\tilde{a} \in C^1(\partial \Omega)$ does not lie in the finite dimensional subspace $V_J$.
An idea for the reconstruction of $\tilde{a}$ is to determine 
\begin{align}\label{eq:optcont}
\argmin_{a \in C^1(\partial \Omega)} \Vert u^{(a)} - q \Vert_{L^2(\omega)} \quad \text{under the constraint that } u^{(a)} \text{ solves \eqref{eq:robin-gen-ex}.} 
\end{align}
Clearly, for $a=\tilde{a}$ we have that $\Vert u^{(\tilde{a})} - q \Vert_{L^2(\omega)}=0$.
Using any other function $u^{(\check{a})} \in H^1(\Omega)$ that satisfies $\Vert u^{(\check{a})} - q \Vert_{L^2(\omega)}=0$ and that is a solution to \eqref{eq:robin-gen-ex} with $a$ replaced by $\check{a}$ implies that $u^{(\tilde{a})}-u^{(\check{a})}$ is harmonic and vanishes in $\omega$. The unique continuation principle for harmonic functions yields that $u^{(\tilde{a})}=u^{(\check{a})}$ in all $\Omega$.
The fact that $u^{(\tilde{a})} \in H^2(\Omega)$ and the additional assumption that 
$u^{(\tilde{a})}$ vanishes at most at isolated points on $\partial \Omega$
now yields that $\tilde{a} = \check{a}$.
This means that finding the critical point of \eqref{eq:optcont} uniquely identifies the Robin parameter.
However, in \eqref{eq:optcont}, when we replace $u^{(a)}$ by some finite element approximation $u_h^{(a)}$, where $h$ is the maximal mesh size, this unique identifiability cannot be concluded in the same way anymore.

Following recent works (see \cite{BurOks24, BurOksZhi25}) we introduce a Lagrangian based on \eqref{eq:optcont}, which is $\Theta : H^1(\Omega) \times H^1(\Omega) \times \{a \in C^1(\partial \Omega) \, : \, 0<a_0 \leq a\} \to \R$ with
\begin{align}\label{eq:Theta}
\Theta(u,z,a) \, = \, \frac{1}{2} \Vert u - q \Vert_{L^2(\omega)}^2 + b(u,z) - \ell_{f,g}(z) \, .
\end{align}
The search for saddle points of this Lagrangian leads to three conditions. The first one is to determine $u \in H^1(\Omega)$ satisfying $b(u,v) = \ell_{f,g}(v)$ for all $v \in H^1(\Omega)$ (derive $\Theta$ w.r.t.\@ $z$).
The second one is to determine $z \in H^1(\Omega)$ satisfying $b(z,v) = \ell_{1_\omega(q-u),0}(v)$ for all $v \in H^1(\Omega)$ (derive $\Theta$ w.r.t.\@ $u$).
Finally, the third one is that $L: C^1(\partial \Omega) \to \R$ defined by
\begin{align}\label{eq:defL}
L(\eta) \, = \, \int_{\partial \Omega} \eta u z \ds 
\end{align}
needs to be zero for all $\eta \in C^1(\partial \Omega)$ (derive $\Theta$ w.r.t.\@ $a$).
This means that $u$ and $z$ must be solutions to Robin-type problems, while $L(\eta) = 0$ for all $\eta \in C^1(\partial \Omega)$ needs to be additionally imposed.
The latter requirement can only be satisfied numerically on a finite dimensional space $V_J \subset C^1(\partial \Omega)$ and with finite element approximations $u_h \approx u$ and $z_h \approx z$.
Due to our exposition on the potential loss of unique identifiability of $\tilde{a}$ in this case, we therefore assume in our discussion
(i) that $\tilde{a} \in V_J$ and 
(ii) that the mesh size $h$ in our finite element approximation is sufficiently small.
Under these assumptions we establish a reconstruction scheme based on Newton's method for finding roots for a discretized version of the functional $L$ from \eqref{eq:defL}. 
We prove that retrieving $\tilde{a}$ in this way yields a second order convergent scheme in $h$.
The key in showing this result follows from an application of the work \cite{keller75}.
Details on this are given in Section~\ref{sec:robinrec}.

Denote by $P:H^{-1/2}(\partial \Omega) \to \widetilde{V}_J$ the projection onto $\widetilde{V}_J\subset H^{1/2}(\partial \Omega)$, a finite dimensional subspace. 
Moreover, let $Q = 1-P :  H^{-1/2}(\partial \Omega) \to H^{-1/2}(\partial \Omega)$.
The next theorem is a stability result for unique continuation with additional Robin-type constraint.
\begin{theorem}\label{thm:stability}
For any $v\in H^1(\Omega)$ with 
$\Delta v \in L^2(\Omega)$ it holds that
    \begin{align}\label{stability}
\Vert v \Vert_{H^1(\Omega)} \, \lesssim \, \Vert v \Vert_{L^2(\omega)} + \Vert Q(\p_\nu v + av)\Vert_{H^{-1/2}(\partial \Omega)} + \Vert\Delta v \Vert_{L^2(\Omega)} \, .
    \end{align}
\end{theorem}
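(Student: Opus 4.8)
The plan is to prove \eqref{stability} by splitting $v$ into a part that is directly controlled by the right-hand side via the well-posedness of the Robin problem, and a part that is forced to lie in a fixed finite-dimensional space of harmonic functions, on which the $L^2(\omega)$-norm is equivalent to the $H^1(\Omega)$-norm. This is where the finite-dimensionality of $\widetilde V_J$ is essential and where the estimate departs from the continuum situation.

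First I would fix $f := \Delta v \in L^2(\Omega)$ and $g := \p_\nu v + av$. Since $v \in H^1(\Omega)$ with $\Delta v \in L^2(\Omega)$, the normal trace $\p_\nu v$ is a well-defined element of $H^{-1/2}(\partial\Omega)$ and $v|_{\partial\Omega} \in H^{1/2}(\partial\Omega)$, so $g \in H^{-1/2}(\partial\Omega)$ (using $a \in C^1(\partial\Omega)$), and Green's formula shows that $v$ is the weak solution of \eqref{eq:robin-gen} with data $(f,g)$. Next I would decompose $g = Pg + Qg$, with $Pg \in \widetilde V_J$ and $Qg \in H^{-1/2}(\partial\Omega)$, and let $v_1,v_2 \in H^1(\Omega)$ be the weak solutions of \eqref{eq:robin-gen} associated with the data $(f, Qg)$ and $(0, Pg)$, respectively. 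By linearity and uniqueness $v = v_1 + v_2$, and the well-posedness bound \eqref{eq:wp-bound} gives
\begin{align*}
\Vert v_1 \Vert_{H^1(\Omega)} \,\lesssim\, \Vert f \Vert_{(H^1(\Omega))'} + \Vert Qg \Vert_{H^{-1/2}(\partial\Omega)} \,\lesssim\, \Vert \Delta v \Vert_{L^2(\Omega)} + \Vert Q(\p_\nu v + av) \Vert_{H^{-1/2}(\partial\Omega)},
\end{align*}
so $v_1$, and in particular $\Vert v_1 \Vert_{L^2(\omega)}$, is bounded by the right-hand side of \eqref{stability}.

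For $v_2$ I would argue that it belongs to the space
\begin{align*}
W \,:=\, \bigl\{\, w \in H^1(\Omega) \;:\; \Delta w = 0 \text{ in } \Omega,\ \p_\nu w + aw \in \widetilde V_J \,\bigr\},
\end{align*}
and that, since the Robin problem has for each boundary datum in $\widetilde V_J$ exactly one solution (Lax--Milgram), the solution map restricts to a linear isomorphism from $\widetilde V_J$ onto $W$; hence $\dim W < \infty$. If $w \in W$ vanishes on $\omega$, then $w$ is harmonic on the connected domain $\Omega$ and vanishes on the open set $\omega$, so $w \equiv 0$ by unique continuation for harmonic functions. Therefore $\Vert \cdot \Vert_{L^2(\omega)}$ is a norm on the finite-dimensional space $W$, hence equivalent there to $\Vert \cdot \Vert_{H^1(\Omega)}$, and
\begin{align*}
\Vert v_2 \Vert_{H^1(\Omega)} \,\lesssim\, \Vert v_2 \Vert_{L^2(\omega)} \,\le\, \Vert v \Vert_{L^2(\omega)} + \Vert v_1 \Vert_{L^2(\omega)}.
\end{align*}
Combining this with the bound on $v_1$ and $v = v_1 + v_2$ yields \eqref{stability}.

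The one genuinely non-routine step is this reduction to the finite-dimensional space $W$ together with the appeal to unique continuation on it; everything else is trace theory, Lax--Milgram, and finite-dimensional norm equivalence. I expect the main subtlety to be bookkeeping the implicit constants: the constant produced here depends, through $W$, on $\widetilde V_J$, on $a$, and on $\omega$ and $\Omega$, and it degenerates as $\dim \widetilde V_J$ grows — which is exactly why the continuum version of \eqref{stability} fails. As an alternative one could prove \eqref{stability} by a compactness--contradiction argument: a would-be counterexample sequence $v_n$ with $\Vert v_n \Vert_{H^1(\Omega)} = 1$ and right-hand side tending to zero has a subsequence converging weakly in $H^1(\Omega)$ to a harmonic function with vanishing $L^2(\omega)$-norm and Robin trace in $\widetilde V_J$, i.e.\ an element of $W$ vanishing on $\omega$, hence zero; inserting the weak formulation and using \eqref{eq:coerc} then upgrades this to $\Vert v_n \Vert_{H^1(\Omega)} \to 0$, a contradiction.
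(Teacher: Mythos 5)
Your proof is correct, and its outer structure actually matches the paper's: both split $v$ into a piece controlled by the well-posedness estimate \eqref{eq:wp-bound} (your $v_1$; the paper's auxiliary solution $y$ with data $(\Delta v,\, Q(\partial_\nu v+av))$) and a harmonic remainder whose Robin trace lies in $\widetilde V_J$ (your $v_2$; the paper's $w=y-v$ solving \eqref{eq:up1}). The genuine difference is how the core estimate $\Vert w\Vert_{H^1(\Omega)}\lesssim \Vert w\Vert_{L^2(\omega)}$ is obtained on that remainder class. You note that this class is the finite-dimensional space $W$, the isomorphic image of $\widetilde V_J$ under the harmonic Robin solution map, on which $\Vert\cdot\Vert_{L^2(\omega)}$ is a norm by qualitative unique continuation (using that $\Omega$ is convex, hence connected), and you invoke equivalence of norms on $W$. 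The paper instead runs a quantitative chain: testing the weak formulation with $w$ gives \eqref{eq:boundu}; an inverse inequality on the finite-dimensional range of $P$ combined with the very-weak trace estimates of \cite[Thm.~1.5.3.4]{Gris85} gives \eqref{eq:boundP}; and the logarithmic unique-continuation estimate \eqref{eq:ucpbound} of \cite{Aletal09} closes the loop via $1\lesssim \mu\big(\Vert w\Vert_{L^2(\omega)}/\Vert w\Vert_{H^1(\Omega)}\big)$, which forces \eqref{eq:prebound}. Your route is shorter and avoids both the quantitative continuation result and the dual trace theorems, at the cost of a constant produced purely by finite-dimensional compactness; the paper's constant is in principle traceable in terms of the inverse-inequality constant of $\widetilde V_J$ between $L^2(\partial\Omega)$ and $H^{-3/2-\varepsilon}(\partial\Omega)$ and the exponent in $\mu$, which is what quantifies how the estimate degenerates as $J$ grows (a degeneration the numerics in Example~5 make visible). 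Your concluding compactness--contradiction sketch would also work, but the strong-convergence upgrade there needs the finite rank of $P$ to convert weak convergence of $P(\partial_\nu v_n+av_n)$ into strong convergence; the decomposition argument you actually carry out does not need this and is the cleaner of your two variants.
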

The implicit constant in \eqref{stability} depends on the finite dimensional subspace.
\begin{proof}
First, we show the stability bound \eqref{stability} for $w\in H^1(\Omega)$ satisfying
\begin{subequations}\label{eq:up1}
\begin{align}
\Delta w \, &= \, 0 \, \text{ in } \Omega \, , \\
 Q(\partial_\nu w + aw) \, &= \, 0 \, \text{ on } \partial \Omega\, 
\end{align}
\end{subequations}
in the weak sense.
Since $w$ is harmonic it holds that $\partial_\nu w \in H^{-1/2}(\partial \Omega)$ (see e.g. \cite[Lem.\@ 4.3]{McLean00}).
We consider the variational formulation of \eqref{eq:up1} and use $w\in H^1(\Omega)$ as a test function, which implies that
\begin{align}\label{eq:urep}
\Vert \nabla w \Vert_{L^2(\Omega)}^2 + \Vert a^{1/2} w \Vert_{L^2(\partial \Omega)}^2 \, = \, \int_{\partial \Omega} P(\partial_\nu w + aw) w \ds \, .
\end{align}
The Cauchy--Schwarz inequality shows that
\begin{align}\label{eq:nablaubound}
\Vert \nabla w \Vert_{L^2(\Omega)}^2 \, \lesssim \, \Vert P(\partial_\nu w +aw) \Vert_{L^2(\partial \Omega)} \Vert w \Vert_{L^2(\partial \Omega)} \, .
\end{align}
Furthermore, the Poincar\'e inequality (see e.g.\@ \cite[Eq.\@ (5.3.3)]{BrenSco08}) together with \eqref{eq:urep}, the Cauchy--Schwarz inequality and the assumption that $0<a_0 \leq a$ yields that
\begin{align}\label{eq:ubound}
\begin{split}
\Vert w \Vert_{L^2(\Omega)}^2 \, &\lesssim \, \frac{a_0}{a_0} \Vert w \Vert_{L^2(\partial \Omega)}^2 + \Vert \nabla w \Vert_{L^2(\Omega)}^2 \\
& \lesssim \, \Vert \nabla w \Vert_{L^2(\Omega)}^2 + \Vert a^{1/2} w \Vert_{L^2(\partial \Omega)}^2 \\
&\lesssim \, \Vert P(\partial_\nu w +aw) \Vert_{L^2(\partial \Omega)} \Vert w \Vert_{L^2(\partial \Omega)} \, .
\end{split}
\end{align}
Adding \eqref{eq:nablaubound} and \eqref{eq:ubound} and using 
the boundedness of the trace from $H^1(\Omega)$ to $H^{1/2}((\partial \Omega)_j)$ (see \cite[Thm.\@ 1.5.2.1]{Gris85}) yields
\begin{align}\label{eq:boundu}
\Vert w \Vert_{H^1(\Omega)} \, \lesssim \, \Vert P(\partial_\nu w +aw) \Vert_{L^2(\partial \Omega)} \, .
\end{align}
Now we use that $P$ is a projection from $H^{-1/2}(\partial \Omega)$ to a finite dimensional subspace of $L^2(\partial \Omega)$, and get for any $0<\varepsilon<1/2$ that
\begin{align}\label{eq:boundP}
\begin{split}
\Vert P(\partial_\nu w +aw) \Vert_{L^2(\partial \Omega)} \, &\lesssim \, \Vert P(\partial_\nu w +aw) \Vert_{H^{-3/2-\varepsilon}(\partial \Omega)} \\
\, &= \, \Vert \partial_\nu w +aw \Vert_{H^{-3/2-\varepsilon}(\partial \Omega)} \\
\, &\lesssim \, \Vert \partial_\nu w \Vert_{H^{-3/2-\varepsilon}(\partial \Omega)} +\Vert aw \Vert_{H^{-1/2-\varepsilon}(\partial\Omega)} \\
\, &\lesssim \, \Vert \partial_\nu w \Vert_{H^{-3/2-\varepsilon}(\partial \Omega)} +\Vert a \Vert_{C^1(\partial \Omega)} \Vert w \Vert_{H^{-1/2-\varepsilon}(\partial\Omega)} \\
\, &\lesssim \, \Vert  w \Vert_{L^2(\Omega)} \, ,
\end{split}
\end{align}
where we used that the trace operators $\partial_{\nu_j} w$ and $\gamma_j w$ can be extended to bounded operators from 
$D(\Delta) = \{w \in L^2(\Omega) \, : \, \Delta w \in L^2(\Omega)\}$ to
$H^{-3/2-\varepsilon}((\partial \Omega)_j)$ and $H^{-1/2-\varepsilon}((\partial \Omega)_j)$, respectively (see\@ \cite[Thm.\@ 1.5.3.4]{Gris85}).
Note that the normal trace $\partial_\nu w$ is in $H^{-1/2}(\partial \Omega)$, however we want to relax the norm further to $H^{-3/2-\varepsilon}(\partial \Omega)$ to end up with $\Vert w \Vert_{L^2(\Omega)}$ in \eqref{eq:boundP}.
From now on we can proceed just as in the proof of \cite[Thm.\@ 3.1]{BurOks24}.
The elliptic unique continuation stability result of \cite[Thm.\@ 5.3]{Aletal09} reads
\begin{align}\label{eq:ucpbound}
\Vert w \Vert_{L^2(\Omega)} \, \leq \, \Vert w \Vert_{H^1(\Omega)} \mu\bigg(\frac{\Vert w \Vert_{L^2(\omega)}}{\Vert w \Vert_{H^1(\Omega)}}\bigg)\, ,
\end{align}
where $\mu(t) \leq C/(\log(1/t)^\mu)$ for $t<1$.
Now we combine \eqref{eq:boundP} with \eqref{eq:ucpbound} and \eqref{eq:boundu} and see that
\begin{multline*}
\Vert P(\partial_\nu w +aw) \Vert_{L^2(\partial \Omega)} \, \lesssim \, \Vert  w \Vert_{L^2(\Omega)}  \, \lesssim \Vert w \Vert_{H^1(\Omega)} \mu\bigg(\frac{\Vert w \Vert_{L^2(\omega)}}{\Vert w \Vert_{H^1(\Omega)}}\bigg) \\
\, \lesssim \Vert P(\partial_\nu w +aw) \Vert_{L^2(\partial \Omega)} \mu\bigg(\frac{\Vert w \Vert_{L^2(\omega)}}{\Vert w \Vert_{H^1(\Omega)}}\bigg)\, .
\end{multline*}
This shows that 
$1 \, \lesssim \, \mu({\Vert w \Vert_{L^2(\omega)}}/{\Vert w \Vert_{H^1(\Omega)}})$. 
From the bound of $\mu$ one can now see that
\begin{align}\label{eq:prebound}
\Vert w \Vert_{H^1(\Omega)} \, \lesssim \Vert w \Vert_{L^2(\omega)}\, .
\end{align}
This shows the bound \eqref{stability}.

Now, let $v\in H^1(\Omega)$ with 
$\Delta v \in L^2(\Omega)$ be arbitrary.
We consider the unique (weak) solution $y \in H^1(\Omega)$ of 
\begin{align*}
\Delta y \, = \, \Delta v \, \text{ in } \Omega \, , \quad \partial_\nu y + ay \, = \, Q(\partial_\nu v + av) \, \text{ on } \partial \Omega\, .
\end{align*}
The function $w=y-v \in H^1(\Omega)$ satisfies \eqref{eq:up1}
and therefore, the bound in \eqref{eq:prebound} applies for $w$.
Now, we use the triangle inequality,  \eqref{eq:prebound} as well as the well-posedness estimate in \eqref{eq:wp-bound} and see that
\begin{align*}
\Vert v \Vert_{H^1(\Omega)} \, &\lesssim \, \Vert y \Vert_{H^1(\Omega)} + \Vert w \Vert_{L^2(\omega)} \\
&\lesssim \, \Vert y \Vert_{H^1(\Omega)} + \Vert v \Vert_{L^2(\omega)} \\
\, &\lesssim \, \Vert \Delta v \Vert_{L^2(\Omega)} + \Vert Q(\partial_\nu v+av) \Vert_{H^{-1/2}(\partial \Omega)} + \Vert v \Vert_{L^2(\omega)} \, .
\end{align*}
\end{proof}
\begin{remark}
Our considerations are limited to the two-dimensional setting since both the higher regularity from Lemma~\ref{lem:H2reg} and the stability Theorem~\ref{thm:stability} use results that are only available in two spatial dimensions.
\end{remark}
\begin{remark}
The stability Theorem~\ref{thm:stability} can also be shown with the $(H^{1}(\Omega))'$ norm replacing the $L^2(\Omega)$ norm in the right hand side of \eqref{stability} under the assumption that $\Delta v \in (H^{1}(\Omega))'$ and $\partial_\nu v \in H^{-1/2}(\partial \Omega)$ is meaningful. One might, e.g., define it as
\begin{align*}
\int_{\Omega} \Delta v w \dx \, = \, -\int_{\Omega} \nabla v \cdot \nabla w \dx + \int_{\partial \Omega}\partial_\nu v w \ds \, \quad \text{for } w \in H^1(\Omega)\,.
\end{align*}

\end{remark}

\section{Linearization with respect to the Robin parameter}
From now on let $\Omega$ be a convex open and bounded polygonal domain.
For $h>0$ we denote by $\mathcal{T}_h$ a triangulation of $\Omega$ consisting of simplices that form a simplicial complex. 
We denote by $K\subset \R^2$ a simplex and define $h = \max_{K \in \mathcal{T}_h}\text{diam}(K)$.
We introduce the finite element space
\begin{align*}
V_h \, = \, \{ u \in H^1(\Omega) \, : \, u|_K \in \mathbb{P}^1 \; \text{for all } K \in \mathcal{T}_h \}\,
\end{align*}
and consider the bilinear forms $b$ and $\ell_{f,g}$ from \eqref{eq:bandf}.
The finite element method that approximates a solution to \eqref{eq:robin-gen-ex} consists in determining $u_h^{(a)} \in V_h$ such that
\begin{align}\label{eq:feform}
b(u_h^{(a)}, v) \, = \, \ell_{f,g}(v) \quad \text{for all } v \in V_h\,.
\end{align}
Due to the coercivity bound in \eqref{eq:coerc}, $u_h^{(a)} \in V_h$ exists, is uniquely determined and satisfies the same a-priori bound as $u$ in \eqref{eq:wp-bound} (see also \cite[Lem. 26.3]{ErnGue21}).
We provide two error bounds for this approximation in the next lemma. It follows from the general $H^1$-estimate that can be found, e.g., in \cite[Thm.\@ 3.16]{ErnGue04} and an application of Nitsche's trick.
\begin{lemma}\label{lem:fembound}
Let $u^{(a)} \in H^1(\Omega)$ be the unique solution to \eqref{eq:robin-gen-ex} and let $u_h^{(a)} \in V_h$ be determined by \eqref{eq:feform}. Then, the approximation error can be bounded via
\begin{align*}
\Vert u^{(a)} - u_h^{(a)} \Vert_{L^2(\Omega)} + h\Vert u^{(a)} - u_h^{(a)} \Vert_{H^1(\Omega)} \, \lesssim \, h^2 \Vert u^{(a)} \Vert_{H^2(\Omega)}\, .
\end{align*}
\end{lemma}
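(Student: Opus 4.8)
The plan is to establish the two halves of the estimate in turn: first the $H^1(\Omega)$ bound by a Céa-type argument combined with interpolation, and then the $L^2(\Omega)$ bound by an Aubin--Nitsche duality argument that uses the symmetry of $b$ and the $H^2$ regularity of Lemma~\ref{lem:H2reg}.

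For the energy-norm estimate I would first record that $b$ is bounded on $H^1(\Omega)\times H^1(\Omega)$: the volume term is clearly controlled, and for the boundary term one has $\vert\int_{\partial\Omega}avw\ds\vert\lesssim\Vert a\Vert_{C^1(\partial\Omega)}\Vert v\Vert_{L^2(\partial\Omega)}\Vert w\Vert_{L^2(\partial\Omega)}\lesssim\Vert v\Vert_{H^1(\Omega)}\Vert w\Vert_{H^1(\Omega)}$ by the trace inequality \cite[Thm.\@ 1.5.2.1]{Gris85}. Together with the coercivity \eqref{eq:coerc} and Galerkin orthogonality $b(u^{(a)}-u_h^{(a)},v_h)=0$ for all $v_h\in V_h$, this yields the quasi-optimality bound $\Vert u^{(a)}-u_h^{(a)}\Vert_{H^1(\Omega)}\lesssim\inf_{v_h\in V_h}\Vert u^{(a)}-v_h\Vert_{H^1(\Omega)}$ of \cite[Thm.\@ 3.16]{ErnGue04}. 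Since $u^{(a)}\in H^2(\Omega)$ by Lemma~\ref{lem:H2reg} and $H^2(\Omega)\hookrightarrow C(\overline\Omega)$ in two dimensions, the Lagrange interpolant $I_hu^{(a)}\in V_h$ is well defined and the standard estimate $\Vert u^{(a)}-I_hu^{(a)}\Vert_{H^1(\Omega)}\lesssim h\Vert u^{(a)}\Vert_{H^2(\Omega)}$ applies, which already gives the $H^1$ part, $\Vert u^{(a)}-u_h^{(a)}\Vert_{H^1(\Omega)}\lesssim h\Vert u^{(a)}\Vert_{H^2(\Omega)}$.

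For the $L^2(\Omega)$ estimate I would set $e=u^{(a)}-u_h^{(a)}$ and solve the dual problem: find $z\in H^1(\Omega)$ with $b(v,z)=(e,v)_{L^2(\Omega)}$ for all $v\in H^1(\Omega)$. By symmetry of $b$ this is \eqref{eq:robin-gen-ex} with $f$ replaced by $-e\in L^2(\Omega)$ and $g=0$, so Lemma~\ref{lem:H2reg} gives $z\in H^2(\Omega)$ with $\Vert z\Vert_{H^2(\Omega)}\lesssim\Vert e\Vert_{L^2(\Omega)}$. Testing the dual problem with $v=e$, then using Galerkin orthogonality to subtract the interpolant $I_hz$, then boundedness of $b$ and the interpolation estimate, and finally the energy bound just obtained, one gets
\begin{align*}
\Vert e\Vert_{L^2(\Omega)}^2 \,&=\, b(e,z) \,=\, b(e,z-I_hz) \,\lesssim\, \Vert e\Vert_{H^1(\Omega)}\Vert z-I_hz\Vert_{H^1(\Omega)} \\
\,&\lesssim\, h\Vert e\Vert_{H^1(\Omega)}\Vert z\Vert_{H^2(\Omega)} \,\lesssim\, h\Vert e\Vert_{H^1(\Omega)}\Vert e\Vert_{L^2(\Omega)}\, ,
\end{align*}
so that $\Vert e\Vert_{L^2(\Omega)}\lesssim h\Vert e\Vert_{H^1(\Omega)}\lesssim h^2\Vert u^{(a)}\Vert_{H^2(\Omega)}$. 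Adding this to the $H^1(\Omega)$ bound yields the asserted estimate.

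The point that needs the most care is the $H^2$ regularity invoked in the duality step: Lemma~\ref{lem:H2reg} was stated for curvilinear $C^{1,1}$ domains, while here $\Omega$ is a convex polygon, so I would either note that the same $H^2$ a priori bound holds on convex polygonal domains (Grisvard) or cite the polygonal version directly. Everything else — boundedness and coercivity of $b$, the trace and Lagrange interpolation estimates, and the symmetry of $b$ — is routine. One should also keep track that the implicit constants depend on $a$ through both the continuity of $b$ and the constant in Lemma~\ref{lem:H2reg}, which is harmless since $a$ is fixed.
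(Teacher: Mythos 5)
Your proposal is correct and follows exactly the route the paper indicates for this lemma: the quasi-optimality $H^1$ estimate of \cite[Thm.\@ 3.16]{ErnGue04} combined with Lagrange interpolation, followed by Nitsche's trick using the $H^2$ regularity of Lemma~\ref{lem:H2reg} for the dual problem. Your side remark about regularity on the convex polygon is also resolved as you suspect, since the cited regularity result is formulated for boundaries decomposing into $C^{1,1}$ pieces, which covers convex polygonal domains.
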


We start to derive linearizations of $u_h^{(a)}$ with respect to the Robin parameter $a$.
For $a \in C^1(\partial \Omega)$ with $0<a_0\leq a$ let $a \mapsto u_h^{(a)}$ be the operator that maps the Robin parameter $a$ to the solution $u_h^{(a)} \in V_h$ of \eqref{eq:feform}. 
Note that $b$ from \eqref{eq:bandf} also depends on $a$.
For a fixed $a\in C^1(\partial \Omega)$ let 
\begin{align}\label{def:Lambda}
U_a : D(U_a) \subset C^1(\partial \Omega) \to V_h\, , \quad U_a(\eta) \, = \,  u_h^{(a+\eta)}\, ,
\end{align}
where $D(U_a)$ is a neighborhood of the zero function in $C^1(\partial \Omega)$ that is so small that $u_h^{(a+\eta)}$ is well-defined for any $\eta\in D(U_a)$.
The operator $U_a$ maps a perturbation of the Robin parameter $\eta$ to the solution of the weak formulation with perturbed Robin parameter $a+\eta$.
Note that $U_a(0) = u_h^{(a)}$.
We are interested in characterizing the Fr\'echet derivative of the operator $U_a$ at zero, i.e., the operator $U_a'(0): C^1(\partial \Omega) \to H^1(\Omega)$ that satisfies
\begin{align*}
\frac{1}{\Vert \eta \Vert_{C^1(\partial \Omega)}} \Vert U_a(\eta) - U_a(0) - U_a'(0)\eta \Vert_{H^1(\Omega)} \, \to \, 0 \quad \text{as } \Vert \eta \Vert_{C^1(\partial \Omega)} \to 0\, . 
\end{align*}
We start with a result on the continuity of $U_a$.
\begin{lemma}\label{lem:continuity}
For $a \in C^1(\partial \Omega)$ with $0<a_0\leq a$ the operator $U_a$ from \eqref{def:Lambda} satisfies
\begin{align*}
\Vert U_a(\eta_1) - U_a(\eta_2)\Vert_{H^1(\Omega)}\, \lesssim \, \Vert \eta_1 - \eta_2 \Vert_{C^1(\partial \Omega)}
\end{align*}
for any $\eta_1 \in D(U_a)$ and  $\eta_2 \in D(U_a)$ with $\Vert \eta_2 \Vert_{C^1(\partial \Omega)} \leq \rho$ for some sufficiently small $\rho>0$.
\end{lemma}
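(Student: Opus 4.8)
The plan is to derive the Lipschitz estimate directly from the variational formulation \eqref{eq:feform} by subtracting the equations for the two perturbed Robin parameters and estimating the resulting boundary term. Write $w = U_a(\eta_1) - U_a(\eta_2) = u_h^{(a+\eta_1)} - u_h^{(a+\eta_2)} \in V_h$. Using that $u_h^{(a+\eta_1)}$ solves \eqref{eq:feform} with bilinear form $b$ built from the coefficient $a+\eta_1$, and likewise for $\eta_2$, and that the right-hand side $\ell_{f,g}$ is the same in both cases, the difference satisfies, for all $v \in V_h$,
\begin{align*}
\int_\Omega \nabla w \cdot \nabla v \dx + \int_{\partial \Omega} (a+\eta_2) w v \ds \, = \, -\int_{\partial \Omega} (\eta_1 - \eta_2)\, u_h^{(a+\eta_1)}\, v \ds\, .
\end{align*}
The left-hand side is exactly the bilinear form $b$ associated with the admissible coefficient $a + \eta_2$ (which is bounded below by $a_0/2 > 0$ once $\rho$ is small enough, so coercivity \eqref{eq:coerc} holds with a uniform constant). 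Testing with $v = w$ and applying coercivity gives $\Vert w \Vert_{H^1(\Omega)}^2 \lesssim |\int_{\partial \Omega} (\eta_1 - \eta_2) u_h^{(a+\eta_1)} w \ds|$.

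Next I would bound the right-hand side. Using H\"older's inequality on $\partial \Omega$ and the continuity of the trace operator $H^1(\Omega) \to L^2(\partial \Omega)$, we get
\begin{align*}
\Big| \int_{\partial \Omega} (\eta_1 - \eta_2) u_h^{(a+\eta_1)} w \ds \Big| \, \leq \, \Vert \eta_1 - \eta_2 \Vert_{L^\infty(\partial \Omega)} \Vert u_h^{(a+\eta_1)} \Vert_{L^2(\partial \Omega)} \Vert w \Vert_{L^2(\partial \Omega)} \, \lesssim \, \Vert \eta_1 - \eta_2 \Vert_{C^1(\partial \Omega)} \Vert u_h^{(a+\eta_1)} \Vert_{H^1(\Omega)} \Vert w \Vert_{H^1(\Omega)}\, .
\end{align*}
Combining with the previous display and dividing by $\Vert w \Vert_{H^1(\Omega)}$ yields
$\Vert w \Vert_{H^1(\Omega)} \lesssim \Vert \eta_1 - \eta_2 \Vert_{C^1(\partial \Omega)} \Vert u_h^{(a+\eta_1)} \Vert_{H^1(\Omega)}$. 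It remains to observe that $\Vert u_h^{(a+\eta_1)} \Vert_{H^1(\Omega)}$ is uniformly bounded over $\eta_1 \in D(U_a)$: since $a + \eta_1 \geq a_0/2$, the a-priori bound \eqref{eq:wp-bound} (valid for the finite element solution, as noted after \eqref{eq:feform}) gives $\Vert u_h^{(a+\eta_1)} \Vert_{H^1(\Omega)} \lesssim \Vert f \Vert_{(H^1(\Omega))'} + \Vert g \Vert_{H^{-1/2}(\partial \Omega)}$ with an implicit constant depending only on $\Omega$ and $a_0$, hence independent of $\eta_1$. This finishes the estimate.

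I do not expect a serious obstacle here; the argument is a standard energy estimate. The one point that requires mild care is tracking the dependence of all implicit constants: the coercivity constant, the a-priori bound constant, and the trace constant must all be made uniform in the perturbations $\eta_1,\eta_2$ within $D(U_a)$, which is why one shrinks $D(U_a)$ (and chooses $\rho$) so that $a + \eta_i \geq a_0/2$ throughout. A second small point is that the estimate in fact only uses $\Vert \eta_2 \Vert_{C^1(\partial\Omega)} \leq \rho$ to guarantee well-posedness and uniform coercivity for the "$\eta_2$" problem, while the bound on $\Vert u_h^{(a+\eta_1)}\Vert_{H^1(\Omega)}$ uses that $\eta_1 \in D(U_a)$ already forces $a+\eta_1$ to be an admissible (uniformly positive) coefficient; both are subsumed by choosing $D(U_a)$ small enough, consistently with the definition in \eqref{def:Lambda}.
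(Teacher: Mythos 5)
Your proof is correct and follows essentially the same energy argument as the paper: subtract the two discrete weak formulations, test with the difference, split the boundary term as $(\eta_1-\eta_2)u_h^{(a+\eta_1)}$ plus a term in $w$, and conclude via coercivity, the trace inequality, and the uniform a priori bound on $u_h^{(a+\eta_1)}$. The only (cosmetic) difference is that you keep the full coefficient $a+\eta_2$ on the left and invoke coercivity for the shifted coefficient, whereas the paper coerces with $a$ alone and absorbs the $\eta_2$-term using $\Vert \eta_2\Vert_{C^1(\partial\Omega)}\leq\rho$; both uses of smallness are equivalent.
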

\begin{proof}
We consider the weak formulations for both $U_a(\eta_1) = u_h^{(a+\eta_1)}$ and $U_a(\eta_2) = u_h^{(a+\eta_2)}$, subtract them and obtain
\begin{multline}\label{eq:differencewform}
\int_\Omega \nabla( u_h^{(a+\eta_1)} - u_h^{(a+\eta_2)} ) \cdot \nabla v \dx \\
+ \int_{\partial \Omega} ( (a+\eta_1) u_h^{(a+\eta_1)} - (a+\eta_2) u_h^{(a+\eta_2)} ) v \ds \, = \, 0 \quad \text{for all } v \in V_h\, .
\end{multline}
As a test function we insert $v = u_h^{(a+\eta_1)} - u_h^{(a+\eta_2)}$ and obtain
\begin{align}\label{eq:instest}
\Vert \nabla v \Vert_{L^2(\Omega)}^2 
+ \Vert a^{1/2} v \Vert_{L^2(\partial \Omega)}^2 
\, = \, \int_{\partial \Omega}(\eta_1 u_h^{(a+\eta_1)} - \eta_2 u_h^{(a+\eta_2)})  v \ds  \, .
\end{align}
The left hand side in \eqref{eq:instest} is simply $b(v,v)$.
Moreover, the right hand side in \eqref{eq:instest} can be bounded by
\begin{multline*}
\left|\int_{\partial \Omega}(\eta_1 u_h^{(a+\eta_1)} - \eta_2 u_h^{(a+\eta_2)})  v \ds\right| 
\, = \, \left| \int_{\partial \Omega} ((\eta_1-\eta_2)u^{(a+\eta_1)} + \eta_2 v) v \ds \right| \\
\leq \, \Vert \eta_1 - \eta_2 \Vert_{C^1(\partial \Omega)}\Vert u_h^{(a+\eta_1)} \Vert_{H^1(\Omega)}\Vert v \Vert_{H^1(\Omega)} + \Vert \eta_2 \Vert_{C^1(\partial \Omega)}\Vert v \Vert_{H^1(\Omega)}^2\, .
\end{multline*}
We apply the coercivity result from \eqref{eq:coerc} together with the previous bound and see that \eqref{eq:instest} implies that
\begin{align}\label{eq:vbound}
\Vert v \Vert_{H^1(\Omega)} \, \lesssim \, \Vert \eta_1 - \eta_2 \Vert_{C^1(\partial \Omega)}\Vert u_h^{(a+\eta_1)} \Vert_{H^1(\Omega)} + \Vert \eta_2 \Vert_{C^1(\partial \Omega)}\Vert v \Vert_{H^1(\Omega)}\, .
\end{align} 
We use the assumption that $\Vert \eta_2 \Vert_{C^1(\partial \Omega)}$ is sufficiently small and rearrange \eqref{eq:vbound} to get that
\begin{align}\label{eq:vbound2}
\Vert u_h^{(a+\eta_1)} - u_h^{(a+\eta_2)} \Vert_{H^1(\Omega)} \, \lesssim \, \Vert \eta_1 - \eta_2 \Vert_{C^1(\partial \Omega)}\Vert u_h^{(a+\eta_1)} \Vert_{H^1(\Omega)} \, .
\end{align}
Finally, the weak formulation for $u_h^{(a+\eta_1)}$ gives us that
\begin{align}\label{eq:uhetabound}
\Vert u_h^{(a+\eta_1)} \Vert_{L^2(\partial \Omega)} \, \lesssim \, \Vert f \Vert_{L^2(\Omega)} + \Vert g \Vert_{H^{1/2}(\partial \Omega)}\, .
\end{align}
Combining \eqref{eq:vbound2} and \eqref{eq:uhetabound} now yields the result.
\end{proof}
The Fr\'echet derivative $U_a'(0)$ in direction $\eta$ is characterized through the finite element solution corresponding to the boundary value problem
\begin{subequations}
\begin{align}
\Delta \dot{u}^{(a)}\, &= \, 0 \quad \text{in } \Omega \, ,\\
\partial_\nu \dot{u}^{(a)} + a\dot{u}^{(a)} \, &= \, -\eta u_h^{(a)} \quad \text{on } \partial \Omega \, , \label{eq:bcu}
\end{align}
\end{subequations}
as the next theorem shows.

\begin{theorem}\label{thm:fderu}
Let $\dot{u}_h^{(a)} \in V_h$ denote the unique solution of
\begin{align}\label{eq:uhdot}
b(\dot{u}_h^{(a)}, v) \,=\, \ell_{0,-\eta u_h^{(a)}}(v) \quad \text{for all } v \in V_h\, , 
\end{align}
where $u_h^{(a)}$ is defined by
$b({u}_h^{(a)}, v) = \ell_{f,g}(v)$ for all $v \in V_h$. 
Then,
$\dot{u}_h^{(a)}= U_a'(0)\eta$.
\end{theorem}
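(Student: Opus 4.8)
The plan is to verify the defining relation of the Fréchet derivative directly, after first checking that the map $\eta \mapsto \dot u_h^{(a)}$ is bounded and linear. Linearity is immediate: by the coercivity \eqref{eq:coerc} the problem \eqref{eq:uhdot} has a unique solution in $V_h$, and its data $\ell_{0,-\eta u_h^{(a)}}$ depends linearly on $\eta$, so uniqueness forces $\eta \mapsto \dot u_h^{(a)}$ to be linear. For boundedness I would test \eqref{eq:uhdot} with $v = \dot u_h^{(a)}$, apply \eqref{eq:coerc}, bound $|\eta|$ on $\partial\Omega$ by $\Vert\eta\Vert_{C^1(\partial\Omega)}$, and use the trace inequality of \cite[Thm.\@ 1.5.2.1]{Gris85} on the two boundary factors together with the a-priori bound for $u_h^{(a)}$; this gives $\Vert\dot u_h^{(a)}\Vert_{H^1(\Omega)} \lesssim \Vert\eta\Vert_{C^1(\partial\Omega)}$, so $\eta \mapsto \dot u_h^{(a)}$ is a bounded linear operator into $V_h \subset H^1(\Omega)$.

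The core of the argument is to control the remainder $w_h := U_a(\eta) - U_a(0) - \dot u_h^{(a)} = u_h^{(a+\eta)} - u_h^{(a)} - \dot u_h^{(a)} \in V_h$, which is well-defined once $\Vert\eta\Vert_{C^1(\partial\Omega)}$ is small enough that $\eta \in D(U_a)$. Subtracting the weak formulation \eqref{eq:feform} for $u_h^{(a)}$ from that for $u_h^{(a+\eta)}$ and splitting off the $\eta$-dependent part of the boundary term yields $b(u_h^{(a+\eta)} - u_h^{(a)}, v) = -\int_{\partial\Omega}\eta\, u_h^{(a+\eta)} v \ds$ for all $v \in V_h$, where $b$ here and below always denotes the form with coefficient $a$. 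Subtracting \eqref{eq:uhdot} from this identity gives
\begin{align*}
b(w_h, v) \, = \, -\int_{\partial\Omega}\eta\,\bigl(u_h^{(a+\eta)} - u_h^{(a)}\bigr)\, v \ds \qquad \text{for all } v \in V_h\, .
\end{align*}
Testing with $v = w_h$, using \eqref{eq:coerc}, bounding $|\eta|$ by $\Vert\eta\Vert_{C^1(\partial\Omega)}$ and applying the trace inequality to both boundary factors on the right produces
\begin{align*}
\Vert w_h \Vert_{H^1(\Omega)} \, \lesssim \, \Vert\eta\Vert_{C^1(\partial\Omega)}\, \Vert u_h^{(a+\eta)} - u_h^{(a)}\Vert_{H^1(\Omega)}\, .
\end{align*}

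Finally I would invoke Lemma~\ref{lem:continuity} with $\eta_1 = \eta$ and $\eta_2 = 0$ — permissible since $\Vert 0\Vert_{C^1(\partial\Omega)} = 0 \le \rho$ — to obtain $\Vert u_h^{(a+\eta)} - u_h^{(a)}\Vert_{H^1(\Omega)} \lesssim \Vert\eta\Vert_{C^1(\partial\Omega)}$, hence $\Vert w_h\Vert_{H^1(\Omega)} \lesssim \Vert\eta\Vert_{C^1(\partial\Omega)}^2$. Dividing by $\Vert\eta\Vert_{C^1(\partial\Omega)}$ and letting it tend to zero shows the difference quotient vanishes; together with the linearity and boundedness of $\eta \mapsto \dot u_h^{(a)}$ established above, this is precisely the assertion $U_a'(0)\eta = \dot u_h^{(a)}$.

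I expect the only genuinely nontrivial step to be the upgrade of the factor $\Vert u_h^{(a+\eta)} - u_h^{(a)}\Vert_{H^1(\Omega)}$ from being merely $O(1)$ to being $O(\Vert\eta\Vert_{C^1(\partial\Omega)})$: this quadratic gain is exactly what distinguishes differentiability from plain Lipschitz continuity, and it is furnished by Lemma~\ref{lem:continuity}. The remaining ingredients — unique solvability of \eqref{eq:uhdot}, linearity of its data in $\eta$, and the coercivity and trace estimates — are routine.
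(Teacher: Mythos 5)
Your proposal is correct and follows essentially the same route as the paper: subtract the weak formulations for $u_h^{(a+\eta)}$, $u_h^{(a)}$ and $\dot u_h^{(a)}$ to get $b(w_h,v)=-\int_{\partial\Omega}\eta\,(u_h^{(a+\eta)}-u_h^{(a)})\,v\ds$, test with $v=w_h$, and combine coercivity, the trace inequality and Lemma~\ref{lem:continuity} to obtain the quadratic remainder bound. The extra verification of linearity and boundedness of $\eta\mapsto\dot u_h^{(a)}$ is a harmless addition the paper leaves implicit.
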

\begin{proof}
Similarly to the proof of Lemma~\ref{lem:continuity}, we write down the weak formulation for $u_h^{(a+\eta)}$ and subtract the weak formulation for $u_h^{(a)}$ as well as the one from $\dot{u}_h^{(a)}$ from \eqref{eq:uhdot}. This results in
\begin{multline}\label{eq:differencewform2}
\int_\Omega \nabla( u_h^{(a+\eta)} - u_h^{(a)} - \dot{u}_h^{(a)}) \cdot \nabla v \dx + \int_{\partial \Omega} a (u_h^{(a+\eta)} -  u_h^{(a)} - \dot{u}_h^{(a)} ) v \ds \\
+\int_{\partial \Omega} \eta (u_h^{(a+\eta)} - u_h^{(a)}) v \ds  \, = \, 0 \quad \text{for all } v \in V_h\, .
\end{multline}
We insert the function $v = u_h^{(a+\eta)} - u_h^{(a)} - \dot{u}_h$ for $v$ in \eqref{eq:differencewform2}, apply the Cauchy--Schwarz inequality and subsequently Lemma~\ref{lem:continuity}, which yields that
\begin{align*}
b(v,v) 
\, &\lesssim \, \left| \int_{\partial \Omega} \eta (u_h^{(a+\eta)} - u_h^{(a)}) v \ds \right| \\
\, &\lesssim \, \Vert \eta \Vert_{C^1(\partial \Omega)} \Vert u_h^{(a+\eta)} - u_h^{(a)} \Vert_{L^2(\partial \Omega)} \Vert v\Vert_{L^2(\partial \Omega)}  
\ \lesssim \, \Vert \eta \Vert_{C^1(\partial \Omega)}^2 \Vert v \Vert_{H^1(\Omega)}\, .
\end{align*}
Applying the coercivity \eqref{eq:coerc} and dividing by $\Vert v \Vert_{H^1(\Omega)}$ yields the result.
\end{proof}
Theorem~\ref{thm:stability} yields the following corollary. Recall the definition of the finite dimensional space $V_J$ from the beginning of Section~\ref{sec:robinintro}.
\begin{corollary}\label{cor:dotuh}
Let $\eta \in V_J$ be the function appearing in \eqref{eq:uhdot}. 
Then, the function $\dot{u}_h^{(a)} \in V_h$ defined by \eqref{eq:uhdot} satisfies
\begin{align}\label{eq:uhpbound}
\Vert \dot{u}_h^{(a)} \Vert_{H^1(\Omega)}\, \lesssim \, \Vert \dot{u}_h^{(a)} \Vert_{L^2(\omega)} + h \Vert \eta \Vert_{C^1(\partial \Omega)} \, .
\end{align}
The implicit constant in \eqref{eq:uhpbound} depends on $a$ and on the dimension of the finite dimensional subspace $V_J$.
\end{corollary}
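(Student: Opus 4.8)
The plan is to apply the stability estimate of Theorem~\ref{thm:stability} to the function $v = \dot{u}_h^{(a)}$, and then to control the boundary term $\Vert Q(\partial_\nu v + av)\Vert_{H^{-1/2}(\partial\Omega)}$ that appears on the right-hand side by exploiting that $\dot{u}_h^{(a)}$ is only a finite element solution of the problem with boundary data $-\eta u_h^{(a)}$, not the exact solution. First I would note that $v = \dot{u}_h^{(a)} \in V_h \subset H^1(\Omega)$ has $\Delta v = 0$ in the piecewise sense, but the quantity $\partial_\nu v + av$ on $\partial\Omega$ does not exactly equal $-\eta u_h^{(a)}$; rather, \eqref{eq:uhdot} is the Galerkin approximation. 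So the first key step is to estimate how far $\partial_\nu v + av$ is from $-\eta u_h^{(a)}$, in an appropriate negative norm, using the standard finite element consistency/duality argument together with Lemma~\ref{lem:fembound} applied to the auxiliary continuous problem \eqref{eq:bcu}.

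More concretely, let $\dot{u}^{(a)} \in H^1(\Omega)$ solve \eqref{eq:bcu} exactly, so that $\partial_\nu \dot{u}^{(a)} + a\dot{u}^{(a)} = -\eta u_h^{(a)}$ on $\partial\Omega$. Since $\eta \in V_J \subset C^1(\partial\Omega)$ and $u_h^{(a)} \in H^1(\Omega)$, the trace $\eta u_h^{(a)}$ lies in $H^{1/2}(\partial\Omega)$ with norm bounded by $\Vert\eta\Vert_{C^1(\partial\Omega)}\Vert u_h^{(a)}\Vert_{H^1(\Omega)} \lesssim \Vert\eta\Vert_{C^1(\partial\Omega)}$ using the a priori bound for $u_h^{(a)}$; hence by Lemma~\ref{lem:H2reg}, $\dot{u}^{(a)} \in H^2(\Omega)$ with $\Vert\dot{u}^{(a)}\Vert_{H^2(\Omega)} \lesssim \Vert\eta\Vert_{C^1(\partial\Omega)}$. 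Then Lemma~\ref{lem:fembound} gives $\Vert \dot{u}^{(a)} - \dot{u}_h^{(a)}\Vert_{L^2(\Omega)} + h\Vert \dot{u}^{(a)} - \dot{u}_h^{(a)}\Vert_{H^1(\Omega)} \lesssim h^2 \Vert\eta\Vert_{C^1(\partial\Omega)}$. Now I apply Theorem~\ref{thm:stability} to $v = \dot{u}_h^{(a)}$: the $\Vert\Delta v\Vert_{L^2(\Omega)}$ term vanishes (piecewise, $v$ is affine, so this is zero — or one applies the theorem after observing $v \in V_h$ is harmonic elementwise; if a global statement is needed I would instead apply it to $\dot u^{(a)}$ and use the triangle inequality with the $L^2(\Omega)$ and $L^2(\omega)$ error bounds from Lemma~\ref{lem:fembound}). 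This yields
\begin{align*}
\Vert \dot{u}_h^{(a)} \Vert_{H^1(\Omega)} \, \lesssim \, \Vert \dot{u}_h^{(a)} \Vert_{L^2(\omega)} + \Vert Q(\partial_\nu \dot{u}_h^{(a)} + a\dot{u}_h^{(a)})\Vert_{H^{-1/2}(\partial \Omega)} \, .
\end{align*}

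It remains to bound the projected normal-flux term by $h\Vert\eta\Vert_{C^1(\partial\Omega)}$. The idea is: $Q(\partial_\nu \dot{u}^{(a)} + a\dot{u}^{(a)}) = Q(-\eta u_h^{(a)})$, and whether or not this vanishes (it does precisely if $\eta u_h^{(a)} \in \widetilde V_J$, which we cannot assume), it is at least bounded in $H^{-1/2}(\partial\Omega)$ by $\Vert\eta\Vert_{C^1(\partial\Omega)}$ — so by itself it is $O(1)$, not $O(h)$. Therefore the genuine gain must come from the Galerkin orthogonality: $\partial_\nu \dot{u}_h^{(a)} + a\dot{u}_h^{(a)}$, tested against functions in $\widetilde V_J$ (or rather the discrete flux defined variationally), agrees with $-\eta u_h^{(a)}$ when paired against $V_h$-functions, and $Q$ annihilates the $\widetilde V_J$ part; combining the $H^{-1/2}$-duality characterization of the discrete normal flux error with the $H^1(\Omega)$-error bound $\Vert\dot{u}^{(a)} - \dot{u}_h^{(a)}\Vert_{H^1(\Omega)} \lesssim h\Vert\eta\Vert_{C^1(\partial\Omega)}$ — and exploiting that $\widetilde V_J$ is a \emph{fixed finite-dimensional} space so that on it the $H^{-1/2}$ and weaker norms are equivalent up to the constant depending on $\dim V_J$ — produces the factor $h$. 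I would carry this out via: write $Q(\partial_\nu\dot u_h^{(a)} + a\dot u_h^{(a)}) = Q\big((\partial_\nu\dot u_h^{(a)} + a\dot u_h^{(a)}) - (\partial_\nu\dot u^{(a)} + a\dot u^{(a)})\big)$ since $Q$ kills the extra piece only after projecting — more carefully, one uses that for the exact solution the flux is $-\eta u_h^{(a)}$ and that $Q$ is bounded on $H^{-1/2}$, reducing everything to bounding the \emph{difference} of discrete and continuous fluxes, which by a standard duality/trace argument (integrate by parts against a harmonic lifting) is controlled by $\Vert\dot u^{(a)} - \dot u_h^{(a)}\Vert_{H^1(\Omega)} \lesssim h\Vert\eta\Vert_{C^1(\partial\Omega)}$, plus the term $Q(-\eta u_h^{(a)})$ which one absorbs by noting it is evaluated inside $Q$ and estimated in the same finite-dimensional-range framework. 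The main obstacle is exactly this last term: making precise why $Q(\partial_\nu\dot u_h^{(a)} + a\dot u_h^{(a)})$ is $O(h)$ rather than $O(1)$ requires care in the definition of the discrete normal flux and in how $Q = 1 - P$ interacts with Galerkin orthogonality against $V_h$ — this is where the hypothesis $\eta \in V_J$ and the relation between $V_J$, $\widetilde V_J$, and $V_h$ (presumably $\widetilde V_J$ contains the relevant boundary traces) must be used, and I expect the cleanest route is to mimic the corresponding step in the proof of the analogous estimate in \cite{BurOks24}.
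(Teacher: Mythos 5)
There is a genuine gap, and it is exactly the point you flag as "the main obstacle." The paper's proof hinges on an idea your proposal never uses: the projection $P$ in Theorem~\ref{thm:stability} is not fixed in advance — the prover gets to \emph{choose} the finite-dimensional space $\widetilde V_J$. In Corollary~\ref{cor:dotuh} it is taken to be $\widetilde V_J^{(u^{(a)})}=\mathrm{span}\{\phi_1 u^{(a)},\dots,\phi_J u^{(a)}\}$, so that for $\eta\in V_J$ the boundary datum $-\eta u^{(a)}$ lies \emph{exactly} in the range of $P$ and $Q(-\eta u^{(a)})=0$. You write that the vanishing of the $Q$-term is something "we cannot assume"; in fact it is arranged by construction. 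Consequently the paper does not need any gain from Galerkin orthogonality or from a discrete normal flux: it introduces the auxiliary \emph{continuous} solution $u^{\prime(a)}$ of \eqref{eq:robinfrech3} with boundary datum $-\eta u^{(a)}$ (note: the exact $u^{(a)}$, not $u_h^{(a)}$), applies Theorem~\ref{thm:stability} to it to get $\Vert u^{\prime(a)}\Vert_{H^1(\Omega)}\lesssim\Vert u^{\prime(a)}\Vert_{L^2(\omega)}$ outright, and then recovers \eqref{eq:uhpbound} by two triangle inequalities, bounding $\Vert \dot u_h^{(a)}-u^{\prime(a)}\Vert_{H^1}$ via $\Vert\dot u^{(a)}-\dot u_h^{(a)}\Vert_{H^1}+\Vert\dot u^{(a)}-u^{\prime(a)}\Vert_{H^1}\lesssim h\Vert\eta\Vert_{C^1(\partial\Omega)}$, where the first term is the FEM error of Lemma~\ref{lem:fembound} and the second is a well-posedness estimate for two continuous Robin problems whose boundary data differ by $-\eta(u_h^{(a)}-u^{(a)})$.

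Your route of applying the stability theorem directly to $v=\dot u_h^{(a)}$ also does not work as stated: for a piecewise affine $H^1$ function the distributional Laplacian contains jump contributions across interior edges and is not in $L^2(\Omega)$, so the hypothesis of Theorem~\ref{thm:stability} fails (elementwise harmonicity is not enough), and $\partial_\nu\dot u_h^{(a)}$ is not the variational flux $-\eta u_h^{(a)}$. Your fallback of applying the theorem to $\dot u^{(a)}$ is salvageable, but only after the correct choice of $\widetilde V_J$ is made, since then $Q(-\eta u_h^{(a)})=Q(-\eta(u_h^{(a)}-u^{(a)}))$ is $O(h)$ by Lemma~\ref{lem:fembound}; without that choice the term remains $O(1)$ and the subsequent Galerkin-orthogonality argument you sketch cannot rescue it, as you concede. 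So the proposal is missing the one idea that makes the corollary true, and the machinery it proposes in its place would not close the gap.
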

\begin{proof}
To apply Theorem~\ref{thm:stability}, let $P: H^{-1/2}(\partial \Omega) \to \widetilde{V}_J^{(u^{(a)})}$, where $\widetilde{V}_J^{(u^{(a)})}$ is the space spanned by $\phi_1u^{(a)}, \dots, \phi_Ju^{(a)}$ and $Q=1-P$.
We introduce the auxiliary function $u^{\prime (a)} \in H^1(\Omega)$ satisfying
\begin{subequations}\label{eq:robinfrech3}
\begin{align}
\Delta u^{\prime (a)} \, &= \, 0 \quad \text{in } \Omega \, ,\\
\partial_\nu u^{\prime (a)}  + au^{\prime (a)}  \, &= \, -\eta u^{(a)} \quad \text{on } \partial \Omega \, . 
\end{align}
\end{subequations}
Since $\eta u^{(a)} \in \widetilde{V}_J^{(u^{(a)})}$ Theorem~\ref{thm:stability} applied to $u^{\prime (a)} $ from \eqref{eq:robinfrech3} gives that
\begin{align}\label{eq:stabapp}
\Vert u^{\prime (a)}  \Vert_{H^1(\Omega)} \, \lesssim \, \Vert u^{\prime (a)}  \Vert_{L^2(\omega)}\, .
\end{align}
Applying the triangle inequality twice and using \eqref{eq:stabapp} gives
\begin{align}\label{eq:estuhdot}
\begin{split}
\Vert \dot{u}_h^{(a)} \Vert_{H^1(\Omega)} \, &\leq \, \Vert \dot{u}_h^{(a)} - u^{\prime (a)} \Vert_{H^1(\Omega)} + \Vert  u^{\prime (a)} \Vert_{H^1(\Omega)} \\
\, &\lesssim \, \Vert \dot{u}_h^{(a)} - u^{\prime (a)} \Vert_{H^1(\Omega)} + \Vert  \dot{u}_h^{(a)} \Vert_{L^2(\omega)} \, .
\end{split}
\end{align}
Another use of the triangle inequality shows that
\begin{align}\label{eq:uprimeest}
\Vert \dot{u}_h^{(a)} - u^{\prime (a)} \Vert_{H^1(\Omega)} \, \leq \Vert \dot{u}^{(a)} - \dot{u}_h^{(a)}\Vert_{H^1(\Omega)} + \Vert \dot{u}^{(a)}  - u^{\prime (a)} \Vert_{H^1(\Omega)} \, .
\end{align}
The $H^2$-regularity estimate from Lemma~\ref{lem:H2reg} together with a twofold application of Lemma~\ref{lem:fembound} yields 
\begin{align*}
\Vert \dot{u}^{(a)} - \dot{u}_h^{(a)}\Vert_{H^1(\Omega)}\, \lesssim \, h\Vert \eta \Vert_{C^1(\partial \Omega)} \Vert u_h^{(a)} \Vert_{H^{1/2}(\partial \Omega)} \, \lesssim \, h \Vert \eta \Vert_{C^1(\partial \Omega)} \Vert u^{(a)} \Vert_{H^2(\Omega)} \, .
\end{align*}
In the same way, Lemma~\ref{lem:H2reg} together with Lemma~\ref{lem:fembound} show that
\begin{align*}
\Vert \dot{u}^{(a)}  - u^{\prime (a)} \Vert_{H^1(\Omega)} \, \lesssim \, h \Vert \eta \Vert_{C^1(\partial \Omega)} \Vert u^{(a)} \Vert_{H^2(\Omega)}\, .
\end{align*}
Using these two estimates in \eqref{eq:uprimeest} and combining this with \eqref{eq:estuhdot} finally yields \eqref{eq:uhpbound}.
\end{proof}
We return to the setting of our inverse Robin problem introduced in Section~\ref{sec:robinintro}, in which we know 
$q= u^{(\tilde{a})}|_\omega$, together with $f\in L^2(\Omega)$ and $g\in H^{1/2}(\partial \Omega)$. 
In our notation we distinguish between the Robin parameters $\tilde{a}$ and $a$. The first one is the true Robin parameter in the inverse Robin problem, whereas the latter one is some arbitrary $a \in C^1(\partial \Omega)$ with $0<a_0 \leq a$. The corresponding solutions
$u^{(\tilde{a})}$ and $u^{({a})}$ solve \eqref{eq:robin-gen-ex} with parameters $\tilde{a}$ and $a$, respectively. 

Based on the motivation from Section~\ref{sec:robinintro} we introduce a second Robin type boundary value problem, which is to find $z^{(a)} \in H^1(\Omega)$ such that
\begin{subequations}\label{eq:za}
\begin{align}
\Delta z^{(a)} \, &= \, 1_\omega(q-u^{(a)}) \quad \text{in } \Omega \, ,\\
\partial_\nu z^{(a)} + az^{(a)} \, &= \, 0 \quad \text{on } \partial \Omega \, .
\end{align}
\end{subequations}
Clearly, for $a = \widetilde{a}$, we find that $z^{(\widetilde{a})}=0$.
Furthermore, let $z_h^{(a)} \in V_h$ be the unique solution to 
\begin{align}\label{eq:zh}
b(z_h^{(a)}, v ) \, = \, \ell_{1_\omega(q-u_h^{(a)}),0}(v) \quad \text{for all } v \in V_h\, . 
\end{align}
Note that in the right hand side we impose $u_h^{(a)}$ instead of $u^{(a)}$.
For $z_h^{(\tilde{a})} \in V_h$ we obtain the following bound.
\begin{lemma}\label{lem:zhbound}
Let $z_h^{(\tilde{a})} \in V_h$ be the unique solution to \eqref{eq:zh} for $a=\tilde{a}$. Then,
\begin{align}\label{eq:zhtildebound}
\Vert z_h^{(\tilde{a})} \Vert_{H^1(\Omega)} \, \lesssim \, h^2\, .
\end{align}
The implicit constant in \eqref{eq:zhtildebound}depends on $\tilde{a}$.
\end{lemma}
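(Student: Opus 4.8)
The plan is to estimate $\Vert z_h^{(\tilde a)}\Vert_{H^1(\Omega)}$ by comparing the discrete problem \eqref{eq:zh} at $a=\tilde a$ with the continuous problem \eqref{eq:za} at $a=\tilde a$, whose solution is $z^{(\tilde a)}=0$. Since the right-hand side of \eqref{eq:zh} uses $u_h^{(\tilde a)}$ rather than $u^{(\tilde a)}$, the discrete solution $z_h^{(\tilde a)}$ is \emph{not} the Galerkin approximation of the zero function, so I cannot simply quote Lemma \ref{lem:fembound}. Instead, I would introduce the auxiliary function $\zeta_h\in V_h$ solving $b(\zeta_h,v)=\ell_{1_\omega(q-u^{(\tilde a)}),0}(v)$ for all $v\in V_h$, i.e. the genuine Galerkin approximation of $z^{(\tilde a)}=0$; by uniqueness this forces $\zeta_h=0$. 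Then $z_h^{(\tilde a)}=z_h^{(\tilde a)}-\zeta_h$ solves $b(z_h^{(\tilde a)},v)=\ell_{1_\omega(u^{(\tilde a)}-u_h^{(\tilde a)}),0}(v)$ for all $v\in V_h$.

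Next I would test this identity with $v=z_h^{(\tilde a)}$ and use the coercivity \eqref{eq:coerc} on the left. On the right, $\ell_{1_\omega(u^{(\tilde a)}-u_h^{(\tilde a)}),0}(z_h^{(\tilde a)}) = -\int_\Omega 1_\omega\,(u^{(\tilde a)}-u_h^{(\tilde a)})\,z_h^{(\tilde a)}\dx$, which the Cauchy--Schwarz inequality bounds by $\Vert u^{(\tilde a)}-u_h^{(\tilde a)}\Vert_{L^2(\omega)}\,\Vert z_h^{(\tilde a)}\Vert_{L^2(\Omega)} \le \Vert u^{(\tilde a)}-u_h^{(\tilde a)}\Vert_{L^2(\Omega)}\,\Vert z_h^{(\tilde a)}\Vert_{H^1(\Omega)}$. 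Dividing through by $\Vert z_h^{(\tilde a)}\Vert_{H^1(\Omega)}$ yields
\begin{align*}
\Vert z_h^{(\tilde a)}\Vert_{H^1(\Omega)} \,\lesssim\, \Vert u^{(\tilde a)}-u_h^{(\tilde a)}\Vert_{L^2(\Omega)}\,.
\end{align*}
Now Lemma \ref{lem:fembound} applied to $u^{(\tilde a)}$ gives $\Vert u^{(\tilde a)}-u_h^{(\tilde a)}\Vert_{L^2(\Omega)}\lesssim h^2\Vert u^{(\tilde a)}\Vert_{H^2(\Omega)}$, and Lemma \ref{lem:H2reg} bounds $\Vert u^{(\tilde a)}\Vert_{H^2(\Omega)}$ by $\Vert f\Vert_{L^2(\Omega)}+\Vert g\Vert_{H^{1/2}(\partial\Omega)}$, with a constant depending on $\tilde a$. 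Combining these gives $\Vert z_h^{(\tilde a)}\Vert_{H^1(\Omega)}\lesssim h^2$, as claimed, with the implicit constant depending on $\tilde a$ through the $H^2$-regularity estimate (and on $f$, $g$, which are fixed data).

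I do not expect a serious obstacle here; the only subtlety is recognizing that $z^{(\tilde a)}=0$ makes the Galerkin error vanish exactly, so the entire size of $z_h^{(\tilde a)}$ comes from the data perturbation $u^{(\tilde a)}\mapsto u_h^{(\tilde a)}$ in the source term. One should double-check the sign and the restriction to $\omega$ in $\ell_{1_\omega(\cdot),0}$, and note that the $L^2(\omega)\subset L^2(\Omega)$ embedding is used to absorb the indicator. An alternative, essentially equivalent, route is to invoke Theorem \ref{thm:stability} for $v=z_h^{(\tilde a)}$, but since we already have a clean $H^1$-coercivity argument this is unnecessary.
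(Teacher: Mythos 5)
Your proof is correct, and it takes a genuinely more direct route than the paper. The paper introduces a continuous auxiliary function $\check{z}^{(\tilde a)}$ solving \eqref{eq:checkz} with source $1_\omega(q-u_h^{(\tilde a)})$, bounds it by well-posedness via $\Vert q-u_h^{(\tilde a)}\Vert_{L^2(\omega)}\lesssim h^2$, and then controls $\Vert \check{z}^{(\tilde a)}-z_h^{(\tilde a)}\Vert_{H^1(\Omega)}\lesssim h^3$ by a second application of Lemma~\ref{lem:fembound} (which requires the $H^2$-regularity of $\check{z}^{(\tilde a)}$ from Lemma~\ref{lem:H2reg}); the triangle inequality then gives the claim. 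You instead observe that $q=u^{(\tilde a)}|_\omega$ makes the exact right-hand side vanish, so that testing the discrete equation with $z_h^{(\tilde a)}$ itself and invoking coercivity \eqref{eq:coerc} reduces everything to $\Vert u^{(\tilde a)}-u_h^{(\tilde a)}\Vert_{L^2(\omega)}\lesssim h^2$ in one step — no auxiliary continuous problem, no extra regularity, and in fact the detour through $\zeta_h=0$ is not even needed since $1_\omega(q-u_h^{(\tilde a)})=1_\omega(u^{(\tilde a)}-u_h^{(\tilde a)})$ identically. The only thing your shortcut loses is that the paper's intermediate objects and bounds \eqref{eq:zcheckest} and \eqref{eq:zhapprox} are reused later (in Corollary~\ref{cor:dotzh} and Lemma~\ref{lem:Fh}), so the authors get additional mileage out of setting up $\check{z}^{(\tilde a)}$ here; as a standalone proof of the lemma, yours is cleaner and equally rigorous.
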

\begin{proof}
We introduce the function $\check{z}^{(\tilde{a})} \in H^1(\Omega)$ solving
\begin{subequations}\label{eq:checkz}
\begin{align}
\Delta \check{z}^{(\tilde{a})} \, &= \, 1_\omega(q-u_h^{(\tilde{a})}) \quad \text{in } \Omega \, ,\\
\partial_\nu \check{z}^{(\tilde{a})} + \tilde{a}\check{z}^{(\tilde{a})} \, &= \, 0 \quad \text{on } \partial \Omega \, 
\end{align}
\end{subequations}
and find by using the well-posedness estimate \eqref{eq:wp-bound} and Lemma~\ref{lem:fembound} that
\begin{align}\label{eq:zcheckest}
\Vert \check{z}^{(\tilde{a})} \Vert_{H^1(\Omega)} \, \lesssim \, \Vert q-u_h^{(\tilde{a})} \Vert_{L^2(\omega)} \, \lesssim \, h^2 \Vert u^{(\tilde{a})} \Vert_{H^2(\Omega)}\, .
\end{align}
Moreover, another application of Lemma~\ref{lem:fembound} and the $H^2$-bound \eqref{eq:uH2bound} give
\begin{align}\label{eq:zhapprox}
\Vert \check{z}^{(\tilde{a})} - z_h^{(\tilde{a})} \Vert_{H^1(\Omega)} \, \lesssim \, h \Vert \check{z}^{(\tilde{a})} \Vert_{H^2(\Omega)} \, \lesssim \, h \Vert q-u_h^{(\tilde{a})} \Vert_{L^2(\omega)} \, \lesssim \, h^3 \Vert u^{(\tilde{a})} \Vert_{H^2(\Omega)} \, .
\end{align}
Combining \eqref{eq:zcheckest} and \eqref{eq:zhapprox} now yields
\begin{align*}
\Vert z_h^{(\tilde{a})} \Vert_{H^1(\Omega)} \, \lesssim \, \Vert \check{z}^{(\tilde{a})} \Vert_{H^1(\Omega)} + \Vert \check{z}^{(\tilde{a})} - z_h^{(\tilde{a})} \Vert_{H^1(\Omega)}  
\, \lesssim \, h^2 \Vert u^{(\tilde{a})} \Vert_{H^2(\Omega)}\, .
\end{align*}
\end{proof}

We use the same framework as before and consider the operator $ a \mapsto z_h^{(a)}$ and 
for a fixed $a$, the operator 
\begin{align}\label{def:Lambdaz}
Z_a : D(Z_a) \subset C^1(\partial \Omega) \to V_h\, , \quad Z_a(\eta) \, = \,  z_h^{(a+\eta)}\, ,
\end{align}
where $D(Z_a)$ is a neighborhood of the zero function in $C^1(\partial \Omega)$ that is so small that $z_h^{(a+\eta)}$ is well-defined for any $\eta\in D(Z_a)$.
We can show the following theorem on the continuity and Fr\'echet derivative of $Z_a(0)$.
\begin{theorem}\label{thm:frechz}
For $a \in C^1(\partial \Omega)$ with $0<a_0\leq a$ the operator $Z_a$ from \eqref{def:Lambdaz} satisfies
\begin{align}\label{eq:contdotz}
\Vert Z_a(\eta_1) - Z_a(\eta_2)\Vert_{H^1(\Omega)}\, \lesssim \, \Vert \eta_1 - \eta_2 \Vert_{C^1(\partial \Omega)}
\end{align}
for any $\eta_1 \in D(Z_a)$ and  $\eta_2 \in D(Z_a)$ with $\Vert \eta_2 \Vert_{C^1(\partial \Omega)} \leq \rho$ for some sufficiently small $\rho>0$.
Furthermore, $Z_a'(0)\eta = \dot{z}_h^{(a)}$, where $\dot{z}_h^{(a)} \in V_h$ is the unique solution to 
\begin{align}\label{eq:zhdot}
b(\dot{z}_h^{(a)}, v) \, = \, \ell_{-1_\omega\dot{u}_h^{(a)}, - \eta z_h^{(a)}}(v) \, \quad \text{for all } v \in V_h\, , 
\end{align}
where $\dot{u}_h^{(a)} \in V_h$ is the function defined in Theorem~\ref{thm:fderu}.
\end{theorem}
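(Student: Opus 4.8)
The plan is to prove the two assertions separately, each mirroring the corresponding argument for the operator $U_a$: the Lipschitz bound \eqref{eq:contdotz} follows the pattern of Lemma~\ref{lem:continuity}, while the identification of $Z_a'(0)$ follows that of Theorem~\ref{thm:fderu}. The one new feature is that the right-hand side defining $z_h^{(a)}$ in \eqref{eq:zh} itself depends on $a$ through $u_h^{(a)}$, so every difference and every linearization picks up an additional contribution transmitted through $u_h^{(a)}$; these contributions are controlled by the results already established for $U_a$. Throughout, $D(Z_a)$ is shrunk so that $a+\eta\ge a_0/2$ for all $\eta\in D(Z_a)$, which makes \eqref{eq:coerc} and the a priori estimate for \eqref{eq:zh} hold with constants uniform in $\eta$.

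\emph{Lipschitz continuity.} I would subtract the defining relations \eqref{eq:zh} for $z_h^{(a+\eta_1)}$ and $z_h^{(a+\eta_2)}$ (carrying the Robin coefficients $a+\eta_1$ and $a+\eta_2$, respectively) and test with $w := z_h^{(a+\eta_1)} - z_h^{(a+\eta_2)}\in V_h$. Writing $(a+\eta_1)z_h^{(a+\eta_1)} - (a+\eta_2)z_h^{(a+\eta_2)} = aw + (\eta_1-\eta_2)z_h^{(a+\eta_1)} + \eta_2 w$ for the boundary term and noting that the difference of the volume right-hand sides equals $\int_\omega(u_h^{(a+\eta_1)}-u_h^{(a+\eta_2)})w\dx$, one obtains
\begin{align*}
b(w,w) \, = \, &-\int_{\partial\Omega}(\eta_1-\eta_2)z_h^{(a+\eta_1)} w \ds - \int_{\partial\Omega}\eta_2 w^2 \ds \\
&+ \int_\omega (u_h^{(a+\eta_1)} - u_h^{(a+\eta_2)}) w \dx \, ,
\end{align*}
where $b$ carries the coefficient $a$. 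The Cauchy--Schwarz inequality and the trace bound control the first term by $\Vert\eta_1-\eta_2\Vert_{C^1(\partial\Omega)}\Vert z_h^{(a+\eta_1)}\Vert_{H^1(\Omega)}\Vert w\Vert_{H^1(\Omega)}$, the second by $\Vert\eta_2\Vert_{C^1(\partial\Omega)}\Vert w\Vert_{H^1(\Omega)}^2$, and the third, via Lemma~\ref{lem:continuity}, by $\Vert\eta_1-\eta_2\Vert_{C^1(\partial\Omega)}\Vert w\Vert_{H^1(\Omega)}$. Applying \eqref{eq:coerc}, absorbing the term $\Vert\eta_2\Vert_{C^1(\partial\Omega)}\Vert w\Vert_{H^1(\Omega)}^2$ on the left using that $\rho$ is small, and dividing by $\Vert w\Vert_{H^1(\Omega)}$, it remains to bound $\Vert z_h^{(a+\eta_1)}\Vert_{H^1(\Omega)}$ uniformly: the a priori estimate for \eqref{eq:zh} gives $\Vert z_h^{(a+\eta_1)}\Vert_{H^1(\Omega)}\lesssim\Vert q-u_h^{(a+\eta_1)}\Vert_{L^2(\omega)}\lesssim\Vert q\Vert_{L^2(\omega)}+\Vert f\Vert_{L^2(\Omega)}+\Vert g\Vert_{H^{1/2}(\partial\Omega)}$, the last step using the uniform bound \eqref{eq:wp-bound} for $u_h^{(a+\eta_1)}$. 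This yields \eqref{eq:contdotz}.

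\emph{Fr\'echet derivative.} With $\dot z_h^{(a)}$ as in \eqref{eq:zhdot}, set $r := z_h^{(a+\eta)} - z_h^{(a)} - \dot z_h^{(a)}\in V_h$. Subtracting from the weak formulation of $z_h^{(a+\eta)}$ (coefficient $a+\eta$) those of $z_h^{(a)}$ and of $\dot z_h^{(a)}$, using $(a+\eta)z_h^{(a+\eta)} - a z_h^{(a)} - a\dot z_h^{(a)} = a r + \eta z_h^{(a+\eta)}$ on the boundary and collecting the volume terms, leads to
\begin{align*}
b(r,v) \, = \, \int_\omega (u_h^{(a+\eta)} - u_h^{(a)} - \dot u_h^{(a)}) v \dx + \int_{\partial\Omega} \eta\,(z_h^{(a)} - z_h^{(a+\eta)}) v \ds \quad \text{for all } v\in V_h,
\end{align*}
where $\dot u_h^{(a)}$ is the function from Theorem~\ref{thm:fderu} and $b$ carries the coefficient $a$. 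Choosing $v=r$, applying Cauchy--Schwarz and the trace bound, and then inserting the quadratic remainder estimate $\Vert u_h^{(a+\eta)} - u_h^{(a)} - \dot u_h^{(a)}\Vert_{H^1(\Omega)}\lesssim\Vert\eta\Vert_{C^1(\partial\Omega)}^2$ that is produced by the proof of Theorem~\ref{thm:fderu} together with the continuity bound $\Vert z_h^{(a)} - z_h^{(a+\eta)}\Vert_{H^1(\Omega)}\lesssim\Vert\eta\Vert_{C^1(\partial\Omega)}$ just proven, one arrives at $b(r,r)\lesssim\Vert\eta\Vert_{C^1(\partial\Omega)}^2\Vert r\Vert_{H^1(\Omega)}$. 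Coercivity \eqref{eq:coerc} then gives $\Vert r\Vert_{H^1(\Omega)}\lesssim\Vert\eta\Vert_{C^1(\partial\Omega)}^2 = o(\Vert\eta\Vert_{C^1(\partial\Omega)})$, which is precisely Fr\'echet differentiability of $Z_a$ at $0$ with $Z_a'(0)\eta=\dot z_h^{(a)}$.

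\emph{Main obstacle.} The step needing most care is the bookkeeping in the second part: the linearization of $z_h^{(a)}$ receives a direct contribution $-\eta z_h^{(a)}$ and an indirect contribution $-1_\omega\dot u_h^{(a)}$ coming through the linearization of $u_h^{(a)}$, and one must track which of the three subtracted weak formulations generates which residual term so that the couplings cancel correctly. It is also essential that the proof of Theorem~\ref{thm:fderu} actually delivers the \emph{quadratic} remainder bound rather than mere differentiability, and that the constants in Lemma~\ref{lem:continuity}, in \eqref{eq:coerc} and in the a priori estimates are uniform over the small neighborhood $D(Z_a)$; both points are straightforward. Apart from this, the proof is a routine adaptation of the two preceding arguments.
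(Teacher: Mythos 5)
Your proposal is correct and follows essentially the same route as the paper's (much terser) proof: subtract the weak formulations, test with the difference, observe that the only new contribution relative to Lemma~\ref{lem:continuity} is the volume term $\int_\omega (u_h^{(a+\eta_1)}-u_h^{(a+\eta_2)})w\dx$ controlled by Lemma~\ref{lem:continuity}, and handle the Fr\'echet derivative by the analogous three-way subtraction using the quadratic remainder from Theorem~\ref{thm:fderu}. Your version simply spells out the bookkeeping that the paper leaves implicit.
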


\begin{proof}
The proof can be done similarly to the proofs of Lemma~\ref{lem:continuity} and Theorem~\ref{thm:fderu}:
To prove \eqref{eq:contdotz} one subtracts the weak formulations for $z_h^{(a+\eta_1)}$ and $z_h^{(a+\eta_2)}$ and uses $v = z_h^{(a+\eta_1)} - z_h^{(a+\eta_2)}$ as a test function.
One obtains an expression as the one in \eqref{eq:differencewform} with an additional right hand side, which is 
\begin{align*}
\int_{\omega} (u_h^{(a+\eta_1)} - u_h^{(a+\eta_2)})(z_h^{(a+\eta_1)} - z_h^{(a+\eta_2)}) \dx \, .
\end{align*}
Using Lemma~\ref{lem:continuity} this term can be bounded by
\begin{align*}
\left| \int_{\omega} (u_h^{(a+\eta_1)} - u_h^{(a+\eta_2)})v \dx\right| \,\lesssim \, \Vert \eta_1-\eta_2 \Vert_{C^1(\partial \Omega)} \Vert v \Vert_{H^1(\Omega)}
\end{align*}
and one can proceed as in the proof Lemma~\ref{lem:continuity}, which proves \eqref{eq:contdotz}.
A similar additional consideration easily proves that $Z_a'(0)\eta = \dot{z}_h$.
\end{proof}
\begin{corollary}\label{cor:dotzh}
The function $\dot{z}^{(\tilde{a})}_h \in V_h$ defined in \eqref{eq:zhdot} satisfies
\begin{align}\label{eq:dotzhbound}
\Vert \dot{z}^{(\tilde{a})}_h \Vert_{H^1(\Omega)} \, \lesssim \, \Vert \dot{u}^{(\tilde{a})}_h \Vert_{L^2(\omega)} + h^2 \Vert \eta \Vert_{C^1(\partial \Omega)} \, .
\end{align}
The implicit constant in \eqref{eq:dotzhbound} depends on $a$.
\end{corollary}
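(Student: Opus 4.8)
The plan is to mimic the proof of Corollary~\ref{cor:dotuh} by comparing the discrete function $\dot z_h^{(\tilde a)}$ to a continuous auxiliary function on which the stability estimate of Theorem~\ref{thm:stability} applies. First I would introduce $z^{\prime(\tilde a)} \in H^1(\Omega)$ as the solution of the boundary value problem
\begin{align*}
\Delta z^{\prime(\tilde a)} \, = \, -1_\omega \dot u_h^{(\tilde a)} \quad \text{in } \Omega\, , \qquad \partial_\nu z^{\prime(\tilde a)} + \tilde a z^{\prime(\tilde a)} \, = \, -\eta z_h^{(\tilde a)} \quad \text{on } \partial \Omega\, .
\end{align*}
To put ourselves in the situation of Theorem~\ref{thm:stability} I would set $P$ to be the projection onto the finite dimensional subspace spanned by $\phi_1 z_h^{(\tilde a)},\dots,\phi_J z_h^{(\tilde a)}$, so that the Robin data $-\eta z_h^{(\tilde a)}$ lies in the range of $P$ and hence $Q(\partial_\nu z^{\prime(\tilde a)} + \tilde a z^{\prime(\tilde a)}) = 0$. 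Then the theorem gives
\begin{align*}
\Vert z^{\prime(\tilde a)}\Vert_{H^1(\Omega)} \, \lesssim \, \Vert z^{\prime(\tilde a)}\Vert_{L^2(\omega)} + \Vert 1_\omega \dot u_h^{(\tilde a)}\Vert_{L^2(\Omega)} \, \lesssim \, \Vert z^{\prime(\tilde a)}\Vert_{L^2(\omega)} + \Vert \dot u_h^{(\tilde a)}\Vert_{L^2(\omega)}\, .
\end{align*}

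Next I would insert $\dot z_h^{(\tilde a)}$ by the triangle inequality, writing $\Vert \dot z_h^{(\tilde a)}\Vert_{H^1(\Omega)} \le \Vert \dot z_h^{(\tilde a)} - z^{\prime(\tilde a)}\Vert_{H^1(\Omega)} + \Vert z^{\prime(\tilde a)}\Vert_{H^1(\Omega)}$ and using the stability bound together with $\Vert z^{\prime(\tilde a)}\Vert_{L^2(\omega)} \le \Vert z^{\prime(\tilde a)} - \dot z_h^{(\tilde a)}\Vert_{H^1(\Omega)} + \Vert \dot z_h^{(\tilde a)}\Vert_{L^2(\omega)}$, exactly as in \eqref{eq:estuhdot}. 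It then remains to bound the finite element consistency error $\Vert \dot z_h^{(\tilde a)} - z^{\prime(\tilde a)}\Vert_{H^1(\Omega)}$. I would split it as in \eqref{eq:uprimeest} into $\Vert \dot z^{(\tilde a)} - \dot z_h^{(\tilde a)}\Vert_{H^1(\Omega)} + \Vert \dot z^{(\tilde a)} - z^{\prime(\tilde a)}\Vert_{H^1(\Omega)}$, where $\dot z^{(\tilde a)}$ is the continuous solution of the problem whose discretization is \eqref{eq:zhdot} (with $u_h^{(a)}$ and $z_h^{(a)}$ kept in the data). The first term is handled by Lemma~\ref{lem:fembound} and Lemma~\ref{lem:H2reg}: one gets $\Vert \dot z^{(\tilde a)} - \dot z_h^{(\tilde a)}\Vert_{H^1(\Omega)} \lesssim h\Vert \dot z^{(\tilde a)}\Vert_{H^2(\Omega)} \lesssim h(\Vert \dot u_h^{(\tilde a)}\Vert_{L^2(\omega)} + \Vert \eta z_h^{(\tilde a)}\Vert_{H^{1/2}(\partial\Omega)})$, and since by Corollary~\ref{cor:dotuh} $\Vert \dot u_h^{(\tilde a)}\Vert_{L^2(\omega)} \le \Vert \dot u_h^{(\tilde a)}\Vert_{H^1(\Omega)} \lesssim \Vert \dot u_h^{(\tilde a)}\Vert_{L^2(\omega)} + h\Vert\eta\Vert_{C^1(\partial\Omega)}$ and by Lemma~\ref{lem:zhbound} $\Vert z_h^{(\tilde a)}\Vert_{H^1(\Omega)} \lesssim h^2$, this contributes terms of order $h\Vert \dot u_h^{(\tilde a)}\Vert_{L^2(\omega)}$ and $h^2\Vert\eta\Vert_{C^1(\partial\Omega)}$ (absorbing the $h^2\Vert\dot u_h^{(\tilde a)}\Vert$ and $h^3\Vert\eta\Vert$ pieces into these for $h$ small).

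For the difference $\Vert \dot z^{(\tilde a)} - z^{\prime(\tilde a)}\Vert_{H^1(\Omega)}$ I would subtract the two continuous problems: their Laplacians differ only in that one uses $1_\omega \dot u_h^{(\tilde a)}$ (same in both here, since \eqref{eq:zhdot} already has $\dot u_h$), so actually the only discrepancy is whether $z_h^{(\tilde a)}$ versus… wait, here both carry $z_h^{(\tilde a)}$ in the boundary data, so in fact $\dot z^{(\tilde a)}$ and $z^{\prime(\tilde a)}$ solve the \emph{same} continuous problem and this term vanishes; the only genuine finite element error is the first one. This makes the estimate cleaner than in Corollary~\ref{cor:dotuh}. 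Combining the pieces yields $\Vert \dot z_h^{(\tilde a)}\Vert_{H^1(\Omega)} \lesssim \Vert \dot u_h^{(\tilde a)}\Vert_{L^2(\omega)} + h\Vert \dot u_h^{(\tilde a)}\Vert_{L^2(\omega)} + h^2\Vert\eta\Vert_{C^1(\partial\Omega)}$, and absorbing the $h$-small middle term gives \eqref{eq:dotzhbound}. The main obstacle I anticipate is bookkeeping the correct choice of the projection $P$ (it must be the one adapted to $z_h^{(\tilde a)}$ so that the Robin data lies in its range) and making sure the $H^2$-regularity of the auxiliary problems is available — this requires the right-hand side $1_\omega \dot u_h^{(\tilde a)}$ to be in $L^2(\Omega)$, which holds since $\dot u_h^{(\tilde a)} \in V_h \subset H^1(\Omega)$, and the Robin data $-\eta z_h^{(\tilde a)}$ to be in $H^{1/2}(\partial\Omega)$, which holds since $\eta \in V_J \subset C^1(\partial\Omega)$ and $z_h^{(\tilde a)}|_{\partial\Omega} \in H^{1/2}(\partial\Omega)$.
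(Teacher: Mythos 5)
Your plan transplants the unique-continuation machinery of Corollary~\ref{cor:dotuh} into a situation where it is not needed, and in doing so it develops two genuine gaps. First, the projection $P$ you propose maps onto $\mathrm{span}\{\phi_1 z_h^{(\tilde a)},\dots,\phi_J z_h^{(\tilde a)}\}$, which is an $h$-dependent space (indeed $z_h^{(\tilde a)}\to 0$ as $h\to 0$). The implicit constant in Theorem~\ref{thm:stability} depends on the finite-dimensional subspace --- the inverse inequality $\Vert P(\cdot)\Vert_{L^2(\partial\Omega)}\lesssim\Vert P(\cdot)\Vert_{H^{-3/2-\varepsilon}(\partial\Omega)}$ in \eqref{eq:boundP} is a norm equivalence on that space --- so nothing guarantees uniformity in $h$. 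In Corollary~\ref{cor:dotuh} this issue does not arise because the space $\mathrm{span}\{\phi_j u^{(\tilde a)}\}$ is fixed. Second, after your triangle-inequality step the term $\Vert \dot z_h^{(\tilde a)}\Vert_{L^2(\omega)}$ survives on the right-hand side; it does not appear in \eqref{eq:dotzhbound}, it cannot be absorbed into the left-hand side, and your final ``combining the pieces'' silently drops it. The only way to control it is through the a priori (well-posedness) bound for \eqref{eq:zhdot} --- but that bound already proves the corollary outright, which makes the entire stability detour circular.

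That a priori bound is in fact the paper's whole proof: the discrete problem \eqref{eq:zhdot} has volume data $-1_\omega\dot u_h^{(\tilde a)}$, whose $(H^1(\Omega))'$ norm is at most $\Vert\dot u_h^{(\tilde a)}\Vert_{L^2(\omega)}$ because the source is supported in $\omega$, and boundary data $-\eta z_h^{(\tilde a)}$, which satisfies $\Vert \eta z_h^{(\tilde a)}\Vert_{H^{-1/2}(\partial\Omega)}\lesssim \Vert\eta\Vert_{C^1(\partial\Omega)}\Vert z_h^{(\tilde a)}\Vert_{H^{1/2}(\partial\Omega)}\lesssim h^2\Vert\eta\Vert_{C^1(\partial\Omega)}$ by the trace theorem together with Lemma~\ref{lem:zhbound} (via the intermediate function $\check z^{(\tilde a)}$ and the estimates \eqref{eq:zcheckest}--\eqref{eq:zhapprox}). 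Coercivity \eqref{eq:coerc} then gives \eqref{eq:dotzhbound} in one line. The structural point you missed is that, unlike $\dot u_h^{(\tilde a)}$, the function $\dot z_h^{(\tilde a)}$ is driven entirely by data that are either observable on $\omega$ or already known to be $O(h^2)$, so no conditional stability estimate is required.
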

\begin{proof}
We recall the function $\check{z}^{(\tilde{a})} \in H^1(\Omega)$ from \eqref{eq:checkz}.
Now we use the well-posedness estimate for $\dot{z}^{(\tilde{a})}_h$ and get that
\begin{align*}
\Vert \dot{z}^{(\tilde{a})}_h \Vert_{H^1(\Omega)} \, &\lesssim \, \Vert \dot{u}^{(\tilde{a})}_h \Vert_{L^2(\omega)} + \Vert \eta \Vert_{C^1(\partial \Omega)} \Vert z_h^{(\tilde{a})} \Vert_{H^{1/2}(\partial \Omega)} \\
& \lesssim \, \Vert \dot{u}^{(\tilde{a})}_h \Vert_{L^2(\omega)} +  \Vert \eta \Vert_{C^1(\partial \Omega)} ( \Vert z_h^{(\tilde{a})} - \check{z}^{(\tilde{a})} \Vert_{H^1(\Omega)} + \Vert \check{z}^{(\tilde{a})} \Vert_{H^1(\Omega)} )\, .
\end{align*}
Finally, \eqref{eq:zcheckest} and \eqref{eq:zhapprox} yield the desired estimate.
\end{proof}
\section{Stable recovery of the Robin parameter}\label{sec:robinrec}
For any $a \in V_J$ with $0 < a_0 \leq a$ we define the functional 
\begin{align}\label{eq:F}
F(a) \, = \,  (F_1(a), \dots, F_J(a) )\, , \; \; \text{where } F_j(a) \, = \, \int_{\partial \Omega} \phi_j u^{(a)} z^{(a)} \ds\, ,
\end{align}
where $u^{(a)} \in H^1(\Omega)$ and $z^{(a)} \in H^1(\Omega)$ solve \eqref{eq:robin-gen-ex} and \eqref{eq:za}, respectively. 
This functional arises from the study of saddle points of the Lagrangian $\Theta$ defined in \eqref{eq:Theta}, precisely, from the condition that the functional $L$ from \eqref{eq:defL} needs to vanish for all functions in $V_J$.
Moreover, we define its discretized counterpart
\begin{align}\label{eq:Fh}
F_h(a) \, = \,  (F_{h,1}(a), \dots, F_{h,J}(a) )\, , \; \; \text{where } F_{h,j}(a) \, = \, \int_{\partial \Omega} \phi_j u_h^{(a)} z_h^{(a)} \ds\, ,
\end{align}
where $u_h^{(a)} \in V_h$ and $z_h^{(a)} \in V_h$ solve \eqref{eq:feform} 
and \eqref{eq:zh}, respectively.
Note that $F_h$ can be computed given $f, q \in L^2(\Omega)$ and $g \in H^{1/2}(\partial \Omega)$. 
Using the framework from the previous section, we can define the Fr\'echet derivative $\dot{F}_h[a]$ of $F_h$ at $a$, which is given by
\begin{subequations}\label{eq:dotFh}
\begin{align}
&\dot{F}_h[a]\eta \, = \,  (\dot{F}_{h,1}[a]\eta, \dots, \dot{F}_{h,J}[a]\eta )\, ,\quad  \text{where} \\
&\dot{F}_{h,j}[a]\eta \, = \, \int_{\partial \Omega} \phi_j (\dot{u}_h^{(a)} z_h^{(a)} + u_h^{(a)} \dot{z}_h^{(a)}) \ds\, 
\end{align}
\end{subequations}
and where $\dot{u}_h^{(a)} \in V_h$ and $\dot{z}_h^{(a)} \in V_h$ are defined in \eqref{eq:uhdot} and \eqref{eq:zhdot}, respectively.

We follow the framework of \cite{keller75} and show that under the assumption that $\tilde{a} \in V_J$ and 
that $u^{(\tilde{a})}$ vanishes at most at isolated points on $\partial \Omega$
and with $B_\rho(a) = \{\alpha \in V_J \, : \, \Vert a-\alpha \Vert_{C^1(\partial \Omega)}\leq \rho\}$ there holds
\begin{itemize}
\item[(i)] $\dot{F}_h$ is uniformly Lipschitz continuous on $B_\rho(\tilde{a})$ for some $\rho>0$,
\item[(ii)] $F_h$ is consistent with $F$ of order 2 at $\tilde{a}$, that is, 
\begin{align}\label{eq:F_hcons2}
\left| F_h(\tilde{a}) \right|_{\R^J} \, \lesssim \, h^2\, .
\end{align}
\item[(iii)] the Fr\'echet derivatives $\dot{F}_h[\tilde{a}]$ have uniformly bounded inverses for mesh sizes $0<h\leq h_0$.
\end{itemize}
Then, by \cite[Thm.\@ 3.6]{keller75} and \cite[Thm.\@ 3.7]{keller75} the following theorem on the stable reconstruction of $\tilde{a}$ follows.
\begin{theorem}\label{thm:main}
Let $\tilde{a} \in V_J$ and let $u^{(\tilde{a})} \in H^1(\Omega)$ denote the solution to \eqref{eq:robin-gen-ex} for $a=\tilde{a}$ and for some $f\in L^2(\Omega)$ and $g \in H^{1/2}(\partial \Omega)$. 
Assume that
$u^{(\tilde{a})}$ vanishes at most at isolated points on $\partial \Omega$.
Moreover, let $q=u^{(\tilde{a})}|_\omega$ for some $\omega \subset \Omega$ and $f$ and $g$ be known. 
Then, 
\begin{itemize}
\item[(a)] For some sufficiently small $0<h \leq h_0$ and $\rho>0$, there is a unique $a_h \in B_\rho(\tilde{a})$ such that $F_h(a_h) = 0$. Furthermore,
\begin{align}\label{eq:h2bound}
\Vert \tilde{a} - a_h \Vert_{C^1(\partial \Omega)} \, \lesssim \, h^2\, .
\end{align}
The implicit constant in \eqref{eq:h2bound} depends on $\tilde{a}$ and on the dimension of the finite dimensional subspace $J$.
\item[(b)] For a sufficiently good initial guess the Newton iterates corresponding to the equation $F_h(x) = 0$ converge quadratically to the unique root $a_h \in B_\rho(\tilde{a})$ from part (a).
\end{itemize}
\end{theorem}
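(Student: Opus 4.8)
The plan is to verify the three structural properties (i)--(iii) stated above and then to invoke \cite[Thm.~3.6]{keller75} for part (a) and \cite[Thm.~3.7]{keller75} for part (b); essentially all the work lies in (i)--(iii), with (iii) being the crux. Property (ii) is immediate: since $q=u^{(\tilde a)}|_\omega$, the source in \eqref{eq:za} vanishes for $a=\tilde a$, so $z^{(\tilde a)}=0$ and $F(\tilde a)=0$, and for the discrete version one bounds, for each $j$, $|F_{h,j}(\tilde a)|\leq\Vert\phi_j\Vert_{C^0(\partial\Omega)}\Vert u_h^{(\tilde a)}\Vert_{L^2(\partial\Omega)}\Vert z_h^{(\tilde a)}\Vert_{L^2(\partial\Omega)}$, then uses the trace inequality, the a priori bound \eqref{eq:wp-bound} for $u_h^{(\tilde a)}$, and Lemma~\ref{lem:zhbound} (which gives $\Vert z_h^{(\tilde a)}\Vert_{H^1(\Omega)}\lesssim h^2$) to reach \eqref{eq:F_hcons2}.

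For property (i) I would first record that $a\mapsto\dot u_h^{(a)}$ and $a\mapsto\dot z_h^{(a)}$ are Lipschitz from $B_\rho(\tilde a)$ into $H^1(\Omega)$, with constants uniform in $h$. This follows by the argument behind Lemma~\ref{lem:continuity} and Theorem~\ref{thm:frechz}: subtract the defining weak formulations \eqref{eq:uhdot}, respectively \eqref{eq:zhdot}, at two nearby parameters, test with the difference, and invoke the coercivity \eqref{eq:coerc}, whose constant is uniform on $B_\rho(\tilde a)$ since there $a\geq a_0$ and $\Vert a\Vert_{C^1(\partial\Omega)}$ is bounded. Given these, the uniform Lipschitz continuity of $\dot F_h$ on $B_\rho(\tilde a)$ follows from the product structure \eqref{eq:dotFh}: a difference $\dot F_h[a_1]\eta-\dot F_h[a_2]\eta$ splits into boundary integrals, each with a factor of the form (one of $u_h,z_h,\dot u_h,\dot z_h$ at $a_1$, minus the same at $a_2$) times another such factor; bounding by Cauchy--Schwarz and the trace inequality and using the Lipschitz estimates together with Lemma~\ref{lem:continuity}, Theorem~\ref{thm:frechz}, Corollary~\ref{cor:dotuh}, Corollary~\ref{cor:dotzh} and the uniform $H^1(\Omega)$-bounds on the four families yields $|\dot F_h[a_1]\eta-\dot F_h[a_2]\eta|_{\R^J}\lesssim\Vert a_1-a_2\Vert_{C^1(\partial\Omega)}\Vert\eta\Vert_{C^1(\partial\Omega)}$.

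Property (iii) is the heart of the matter. As $\dot F_h[\tilde a]\colon V_J\to\R^J$ is linear between spaces of equal finite dimension, it suffices to prove $\Vert\eta\Vert_{C^1(\partial\Omega)}\lesssim|\dot F_h[\tilde a]\eta|_{\R^J}$ uniformly for $0<h\leq h_0$. The key is an energy identity: writing $\eta=\sum_{j}\eta_j\phi_j$, testing \eqref{eq:uhdot} with $\dot z_h^{(\tilde a)}$, testing \eqref{eq:zhdot} with $\dot u_h^{(\tilde a)}$, and using the symmetry of $b$, one obtains
\begin{align*}
\sum_{j=1}^{J}\eta_j\,\dot F_{h,j}[\tilde a]\eta \;=\; 2\int_{\partial\Omega}\eta\,z_h^{(\tilde a)}\dot u_h^{(\tilde a)}\ds \;-\; \Vert\dot u_h^{(\tilde a)}\Vert_{L^2(\omega)}^2.
\end{align*}
By Lemma~\ref{lem:zhbound} and the trace inequality the first term is $\lesssim h^2\Vert\eta\Vert_{C^1(\partial\Omega)}\Vert\dot u_h^{(\tilde a)}\Vert_{H^1(\Omega)}$; inserting $\Vert\dot u_h^{(\tilde a)}\Vert_{H^1(\Omega)}\lesssim\Vert\dot u_h^{(\tilde a)}\Vert_{L^2(\omega)}+h\Vert\eta\Vert_{C^1(\partial\Omega)}$ from Corollary~\ref{cor:dotuh} and applying Young's inequality gives $\big|\sum_{j}\eta_j\dot F_{h,j}[\tilde a]\eta\big|\geq\tfrac12\Vert\dot u_h^{(\tilde a)}\Vert_{L^2(\omega)}^2-Ch^3\Vert\eta\Vert_{C^1(\partial\Omega)}^2$. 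It then remains to prove the observability bound $\Vert\dot u_h^{(\tilde a)}\Vert_{L^2(\omega)}\gtrsim\Vert\eta\Vert_{C^1(\partial\Omega)}$ uniformly in $h$. I would prove this first at the continuum level, for the harmonic function $\dot u^{(\tilde a)}$ with $\partial_\nu\dot u^{(\tilde a)}+\tilde a\dot u^{(\tilde a)}=-\eta u^{(\tilde a)}$ on $\partial\Omega$: the map $\eta\mapsto\dot u^{(\tilde a)}$ is linear from $V_J$ to $H^1(\Omega)$, and if $\dot u^{(\tilde a)}=0$ on $\omega$ then unique continuation for harmonic functions forces $\dot u^{(\tilde a)}\equiv0$ in $\Omega$, hence $\eta\,u^{(\tilde a)}=\partial_\nu\dot u^{(\tilde a)}+\tilde a\dot u^{(\tilde a)}=0$ on $\partial\Omega$; since $u^{(\tilde a)}$ vanishes at most at isolated points of $\partial\Omega$ and $\eta$ is continuous, $\eta\equiv0$. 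Thus $\eta\mapsto\Vert\dot u^{(\tilde a)}\Vert_{L^2(\omega)}$ is a norm on the finite-dimensional space $V_J$, equivalent to $\Vert\cdot\Vert_{C^1(\partial\Omega)}$; combining with the finite element bound $\Vert\dot u_h^{(\tilde a)}-\dot u^{(\tilde a)}\Vert_{H^1(\Omega)}\lesssim h\Vert\eta\Vert_{C^1(\partial\Omega)}$ (used in the proof of Corollary~\ref{cor:dotuh}, via Lemma~\ref{lem:H2reg} and Lemma~\ref{lem:fembound}) and shrinking $h_0$ transfers the bound to $\dot u_h^{(\tilde a)}$. Consequently $\big|\sum_j\eta_j\dot F_{h,j}[\tilde a]\eta\big|\gtrsim\Vert\eta\Vert_{C^1(\partial\Omega)}^2$ for $h$ small, and since $\big|\sum_j\eta_j\dot F_{h,j}[\tilde a]\eta\big|\leq|\eta|_{\R^J}\,|\dot F_h[\tilde a]\eta|_{\R^J}\lesssim\Vert\eta\Vert_{C^1(\partial\Omega)}\,|\dot F_h[\tilde a]\eta|_{\R^J}$, property (iii) follows.

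With (i)--(iii) at hand, \cite[Thm.~3.6]{keller75}, applied with $F$, its approximations $F_h$ and the exact solution $\tilde a$ of $F(\tilde a)=0$, yields for $h\leq h_0$ and $\rho$ small a unique $a_h\in B_\rho(\tilde a)$ with $F_h(a_h)=0$ and $\Vert\tilde a-a_h\Vert_{C^1(\partial\Omega)}\lesssim|F_h(\tilde a)|_{\R^J}\lesssim h^2$, which is (a); and \cite[Thm.~3.7]{keller75}, the Newton--Kantorovich statement for the discrete equation $F_h(x)=0$ --- using that $\dot F_h$ is Lipschitz and $\dot F_h[a_h]$ is invertible, the latter since $a_h$ is $O(h^2)$ close to $\tilde a$ and $\dot F_h[\tilde a]$ is uniformly invertible --- gives the local quadratic convergence of the Newton iterates to $a_h$, which is (b). The main obstacle is clearly (iii): the energy identity above, and above all the passage from continuum unique continuation together with the hypothesis that $u^{(\tilde a)}$ has only isolated boundary zeros to a discrete observability estimate that is uniform in the mesh size; once that is in place, everything else is bookkeeping with the finite element estimates already established.
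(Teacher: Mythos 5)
Your proposal is correct and follows the same overall strategy as the paper: verify the Keller framework conditions (i)--(iii) and invoke \cite[Thm.\@ 3.6, 3.7]{keller75}; your treatments of consistency (ii) and Lipschitz continuity (i) coincide with Corollary~\ref{cor:cons} and Corollary~\ref{cor:contF}. Where you genuinely diverge is in the proof of the uniform invertibility (iii), i.e.\ the analogue of Lemma~\ref{lem:Fh}, and your route is cleaner in two respects. First, your energy identity
$\underline{\eta}\cdot\dot F_h[\tilde a]\eta = 2\int_{\partial\Omega}\eta\, z_h^{(\tilde a)}\dot u_h^{(\tilde a)}\ds - \Vert\dot u_h^{(\tilde a)}\Vert_{L^2(\omega)}^2$
is exact at the discrete level (test \eqref{eq:uhdot} with $\dot z_h^{(\tilde a)}$, test \eqref{eq:zhdot} with $\dot u_h^{(\tilde a)}$, use symmetry of $b$); I checked it against the definitions of $\ell_{f,g}$ and it is correct. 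The paper instead integrates by parts against the \emph{continuum} adjoint states $\dot u^{(\tilde a)}$, $\dot z^{(\tilde a)}$ in \eqref{eq:pidotuh}, which generates the error terms $T_1, T_2$ and the $\varepsilon$-Young manipulations; your identity makes those terms collapse into the single boundary term $2\int_{\partial\Omega}\eta z_h^{(\tilde a)}\dot u_h^{(\tilde a)}\ds$, which is $O(h^2)\Vert\eta\Vert_{C^1(\partial\Omega)}\Vert\dot u_h^{(\tilde a)}\Vert_{H^1(\Omega)}$ by Lemma~\ref{lem:zhbound} and is absorbed exactly as you say. Second, for the observability bound $\Vert\eta\Vert_{C^1(\partial\Omega)}\lesssim\Vert\dot u_h^{(\tilde a)}\Vert_{L^2(\omega)}$ you argue directly that $\eta\mapsto\Vert u^{\prime(\tilde a)}\Vert_{L^2(\omega)}$ (continuum solution with boundary datum $-\eta u^{(\tilde a)}$, the paper's auxiliary function from \eqref{eq:robinfrech3}) is a norm on the finite-dimensional $V_J$ via the unique continuation principle and the isolated-zeros hypothesis, then transfer to $\dot u_h^{(\tilde a)}$ by the $O(h)$ finite element bounds from the proof of Corollary~\ref{cor:dotuh}; the paper reaches the same conclusion by the longer chain through $\Vert\eta u^{(\tilde a)}\Vert_{H^{-1/2}(\partial\Omega)}$ (Lemma~\ref{lem:normequ}) and the $H^{-1/2}$ normal-trace estimate. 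Both routes rest on the same two pillars --- finite dimensionality of $V_J$ and qualitative unique continuation plus isolated boundary zeros --- and both constants legitimately depend on $V_J$; your version buys a shorter proof of (iii) (it can even sidestep the quantitative stability Theorem~\ref{thm:stability} for this particular lemma, though Corollary~\ref{cor:dotuh}, which you cite, still uses it), while the paper's $H^{-1/2}$-based formulation is the one that generalizes to its perturbed setting in Lemma~\ref{lem:Fhdelta}.
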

The next result is a consequence of Lemma~\ref{lem:continuity} and Theorem~\ref{thm:frechz} and treats the uniform Lipschitz continuity of $\dot{F}_h$.
\begin{corollary}\label{cor:contF}
For $a \in V_J$ with $0<a_0\leq a$ the function $\dot{F}_h[\cdot]$ is uniformly Lipschitz continuous on 
$B_\rho(a)$ for some $\rho>0$, that is,
\begin{align*}
\Vert \dot{F}_h[\alpha_1] - \dot{F}_h[\alpha_2] \Vert_{\R^J \leftarrow V_J} \, \lesssim \, \Vert \alpha_1 - \alpha_2 \Vert_{C^1(\partial \Omega)} 
\end{align*} 
for any $\alpha_1, \alpha_2 \in B_{\rho}(a)$.
\end{corollary}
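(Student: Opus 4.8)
The plan is to reduce the operator-norm Lipschitz bound for $\dot F_h[\cdot]$ to four $H^1(\Omega)$-Lipschitz bounds — one for each of the fields $a\mapsto u_h^{(a)}$, $a\mapsto z_h^{(a)}$, $a\mapsto\dot u_h^{(a)}$, $a\mapsto\dot z_h^{(a)}$ entering the boundary integrals \eqref{eq:dotFh} — and then to combine them by telescoping. Fix $\eta\in V_J$ with $\Vert\eta\Vert_{C^1(\partial\Omega)}\le 1$ and $\alpha_1,\alpha_2\in B_\rho(a)$, with $\rho$ small enough that Lemma~\ref{lem:continuity} and Theorem~\ref{thm:frechz} apply with base point $a$ and perturbations $\alpha_i-a$. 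From the coercivity \eqref{eq:coerc}, the well-posedness estimate \eqref{eq:wp-bound} applied to \eqref{eq:feform}, \eqref{eq:zh}, \eqref{eq:uhdot}, \eqref{eq:zhdot}, and the elementary bound $\Vert\eta v\Vert_{H^{-1/2}(\partial\Omega)}\le\Vert\eta\Vert_{L^\infty(\partial\Omega)}\Vert v\Vert_{L^2(\partial\Omega)}\lesssim\Vert\eta\Vert_{C^1(\partial\Omega)}\Vert v\Vert_{H^1(\Omega)}$, one obtains the \emph{a priori} bounds $\Vert u_h^{(\alpha_i)}\Vert_{H^1(\Omega)}+\Vert z_h^{(\alpha_i)}\Vert_{H^1(\Omega)}\lesssim 1$ and $\Vert\dot u_h^{(\alpha_i)}\Vert_{H^1(\Omega)}+\Vert\dot z_h^{(\alpha_i)}\Vert_{H^1(\Omega)}\lesssim\Vert\eta\Vert_{C^1(\partial\Omega)}$, the implicit constants depending on $f$, $g$, $q$, $a_0$, $\Vert a\Vert_{C^1(\partial\Omega)}$ and $\rho$. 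Moreover Lemma~\ref{lem:continuity} and \eqref{eq:contdotz} already give $\Vert u_h^{(\alpha_1)}-u_h^{(\alpha_2)}\Vert_{H^1(\Omega)}+\Vert z_h^{(\alpha_1)}-z_h^{(\alpha_2)}\Vert_{H^1(\Omega)}\lesssim\Vert\alpha_1-\alpha_2\Vert_{C^1(\partial\Omega)}$.

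Next I would establish the two remaining Lipschitz estimates
\[
\Vert\dot u_h^{(\alpha_1)}-\dot u_h^{(\alpha_2)}\Vert_{H^1(\Omega)}+\Vert\dot z_h^{(\alpha_1)}-\dot z_h^{(\alpha_2)}\Vert_{H^1(\Omega)}\lesssim\Vert\alpha_1-\alpha_2\Vert_{C^1(\partial\Omega)}\Vert\eta\Vert_{C^1(\partial\Omega)}
\]
by the device used in the proofs of Lemma~\ref{lem:continuity} and Theorem~\ref{thm:fderu}. Writing $b_\alpha$ for the bilinear form \eqref{eq:bandf} with coefficient $\alpha$, subtract the defining identities \eqref{eq:uhdot} (resp.\ \eqref{eq:zhdot}) for $\alpha_1$ and $\alpha_2$, use $(b_{\alpha_1}-b_{\alpha_2})(w,v)=\int_{\partial\Omega}(\alpha_1-\alpha_2)\,wv\ds$ to bring the coefficient difference to the right-hand side, test with the difference of the two discrete solutions, and invoke coercivity of $b_{\alpha_1}$ together with Cauchy--Schwarz, the trace inequality $H^1(\Omega)\hookrightarrow L^2(\partial\Omega)$, the \emph{a priori} bounds above, and the Lipschitz bounds for $u_h^{(\cdot)}$ and $z_h^{(\cdot)}$. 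For $\dot z_h$ one additionally feeds in the $\dot u_h$-estimate, since $\dot u_h^{(\alpha)}$ sits in the source term of \eqref{eq:zhdot}; hence the steps must be carried out in the order $u_h$ and $z_h$ first, then $\dot u_h$, then $\dot z_h$.

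To conclude, telescope each product in \eqref{eq:dotFh}, e.g.\ $\dot u_h^{(\alpha_1)}z_h^{(\alpha_1)}-\dot u_h^{(\alpha_2)}z_h^{(\alpha_2)}=(\dot u_h^{(\alpha_1)}-\dot u_h^{(\alpha_2)})z_h^{(\alpha_1)}+\dot u_h^{(\alpha_2)}(z_h^{(\alpha_1)}-z_h^{(\alpha_2)})$ and similarly for $u_h^{(\alpha_1)}\dot z_h^{(\alpha_1)}-u_h^{(\alpha_2)}\dot z_h^{(\alpha_2)}$. Each of the resulting four boundary integrals contributing to $\dot F_{h,j}[\alpha_1]\eta-\dot F_{h,j}[\alpha_2]\eta$ is bounded via H\"older's inequality with $\Vert\phi_j\Vert_{L^\infty(\partial\Omega)}$ and the trace inequality, using the bounds of the previous two paragraphs, which gives $|\dot F_{h,j}[\alpha_1]\eta-\dot F_{h,j}[\alpha_2]\eta|\lesssim\Vert\alpha_1-\alpha_2\Vert_{C^1(\partial\Omega)}\Vert\eta\Vert_{C^1(\partial\Omega)}$; taking the supremum over $\Vert\eta\Vert_{C^1(\partial\Omega)}\le 1$ — legitimate because all norms on the finite dimensional $V_J$ are equivalent — and the maximum over $j$ yields the claim. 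I expect the main obstacle to be the bookkeeping in the two intermediate estimates, where both the bilinear form and the source term of \eqref{eq:uhdot}, \eqref{eq:zhdot} depend on the base point (and for $\dot z_h$ the source also depends on $\dot u_h^{(\alpha)}$), forcing the chained order above; once these are in hand, the boundary-integral step is a routine Cauchy--Schwarz and trace argument.
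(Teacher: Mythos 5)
Your proposal is correct and follows essentially the same route as the paper: the paper likewise reduces the claim to Lipschitz estimates for $u_h^{(\cdot)}$, $z_h^{(\cdot)}$ (from Lemma~\ref{lem:continuity} and Theorem~\ref{thm:frechz}) and then for $\dot u_h^{(\cdot)}$, $\dot z_h^{(\cdot)}$ (derived in the same chained order by the subtract-and-test device), and concludes with the identical four-term telescoping of the boundary integrals in \eqref{eq:dotFh} followed by Cauchy--Schwarz and the trace inequality. The additional detail you supply for the intermediate estimates is exactly the ``same procedure'' the paper invokes by reference.
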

\begin{proof}
We can write $\alpha_1 = a + \eta_1$ and $\alpha_2 = a+\eta_2$ for some $\eta_1, \eta_2 \in B_\rho(0)$.
Using the same procedure as in Lemma~\ref{lem:continuity} and Theorem~\ref{thm:frechz} we can see by using Lemma~\ref{lem:continuity} that
\begin{align}\label{eq:contdotu}
\Vert \dot{u}_h^{(a+\eta_1)} - \dot{u}_h^{(a+\eta_2)} \Vert_{H^1(\Omega)} \, \lesssim \, \Vert \alpha_1 - \alpha_2 \Vert_{C^1(\partial \Omega)} \,.
\end{align}
Similarly, by using Theorem~\ref{thm:frechz} and \eqref{eq:contdotu} one finds that
\begin{align}\label{eq:contdotz2}
\Vert \dot{z}_h^{(a+\eta_1)} - \dot{z}_h^{(a+\eta_2)} \Vert_{H^1(\Omega)} \, \lesssim \, \Vert \alpha_1 - \alpha_2 \Vert_{C^1(\partial \Omega)} \,.
\end{align}
Furthermore, we can write
\begin{multline*}
(\dot{F}_{h,j}[\alpha_1] - \dot{F}_{h,j}[\alpha_2])\eta \, = \, \\
\int_{\partial \Omega}\phi_j (\dot{u}_h^{(a+\eta_1)}-\dot{u}_h^{(a+\eta_2)}) z_h^{(a+\eta_1)} \ds +  \int_{\partial \Omega}\phi_j \dot{u}_h^{(a+\eta_2)} (z_h^{(a+\eta_1)} - z_h^{(a+\eta_2)}) \ds  \\
+ \int_{\partial \Omega}\phi_j (u_h^{(a+\eta_1)}-u_h^{(a+\eta_2)}) \dot{z}_h^{(a+\eta_1)} \ds +  \int_{\partial \Omega}\phi_j u_h^{(a+\eta_2)} (\dot{z}_h^{(a+\eta_1)} - \dot{z}_h^{(a+\eta_2)}) \ds \, .
\end{multline*}
The Lipschitz continuity now follows by using the Cauchy--Schwarz inequality together with Lemma~\ref{lem:continuity}, Theorem~\ref{thm:frechz}, \eqref{eq:contdotu} and \eqref{eq:contdotz2}.
\end{proof}

Similarly, one can show consistency of $F_h$ with $F$ on $B_\rho(a)$, but in particular, a refined consistency on $B_0(\tilde{a})$ (see \cite[Def.\@ 3.3]{keller75}).
\begin{corollary}\label{cor:cons}
The family $F_h$ is consistent of order $2$ with $F$ on $B_0(\tilde{a})$.
\end{corollary}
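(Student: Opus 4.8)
The plan is to reduce the corollary to the single estimate \eqref{eq:F_hcons2}. Since $B_0(\tilde a)$ is the singleton $\{\tilde a\}$ and the exact adjoint state satisfies $z^{(\tilde a)} = 0$ (as noted right after \eqref{eq:za}), we have $F(\tilde a) = 0$; hence consistency of order $2$ with $F$ on $B_0(\tilde a)$ in the sense of \cite[Def.\@ 3.3]{keller75} reduces to showing $\vert F_h(\tilde a) - F(\tilde a)\vert_{\R^J} = \vert F_h(\tilde a)\vert_{\R^J} \lesssim h^2$.

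To establish this, fix $j \in \{1,\dots,J\}$ and apply the Cauchy--Schwarz inequality on $\partial\Omega$:
\begin{equation*}
\vert F_{h,j}(\tilde a)\vert \, = \, \Big\vert \int_{\partial\Omega} \phi_j\, u_h^{(\tilde a)}\, z_h^{(\tilde a)} \ds \Big\vert \, \leq \, \Vert \phi_j \Vert_{C^1(\partial\Omega)}\, \Vert u_h^{(\tilde a)} \Vert_{L^2(\partial\Omega)}\, \Vert z_h^{(\tilde a)} \Vert_{L^2(\partial\Omega)}\, .
\end{equation*}
For the middle factor I would use the boundedness of the trace from $H^1(\Omega)$ to $L^2(\partial\Omega)$ together with the a priori bound \eqref{eq:wp-bound} for the finite element solution $u_h^{(\tilde a)}$, giving $\Vert u_h^{(\tilde a)} \Vert_{L^2(\partial\Omega)} \lesssim \Vert f \Vert_{L^2(\Omega)} + \Vert g \Vert_{H^{1/2}(\partial\Omega)}$, a bound independent of $h$. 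For the last factor, the same trace bound and Lemma~\ref{lem:zhbound} give $\Vert z_h^{(\tilde a)} \Vert_{L^2(\partial\Omega)} \lesssim \Vert z_h^{(\tilde a)} \Vert_{H^1(\Omega)} \lesssim h^2$. Multiplying and summing over $j$ yields $\vert F_h(\tilde a)\vert_{\R^J} \lesssim h^2$, with an implicit constant depending on $\tilde a$ and on $J$ through the norms $\Vert \phi_j \Vert_{C^1(\partial\Omega)}$.

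For the coarser consistency of $F_h$ with $F$ on a full ball $B_\rho(\tilde a)$ mentioned just before the corollary, one instead splits $F_j(a) - F_{h,j}(a) = \int_{\partial\Omega}\phi_j\big((u^{(a)}-u_h^{(a)})z^{(a)} + u_h^{(a)}(z^{(a)}-z_h^{(a)})\big)\ds$ and controls the boundary errors $\Vert u^{(a)}-u_h^{(a)} \Vert_{L^2(\partial\Omega)}$ and $\Vert z^{(a)}-z_h^{(a)} \Vert_{L^2(\partial\Omega)}$ via Lemma~\ref{lem:fembound} (interpolating between the $L^2(\Omega)$ and $H^1(\Omega)$ error estimates to bound the traces). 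I do not expect a genuine obstacle here: the only essential new ingredient is Lemma~\ref{lem:zhbound}, whose content is that at $a = \tilde a$ the right-hand side datum $1_\omega(q-u_h^{(\tilde a)})$ of the discrete adjoint problem \eqref{eq:zh} is already of size $h^2$ in $L^2(\Omega)$ because $q = u^{(\tilde a)}\vert_\omega$, and it is precisely this that upgrades the consistency order from one to two at the exact parameter.
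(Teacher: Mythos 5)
Your proposal is correct and follows essentially the same route as the paper: the paper's proof is exactly the Cauchy--Schwarz bound $|F_{h,j}(\tilde a)| \leq \Vert \phi_j \Vert_{C^1(\partial\Omega)} \Vert u_h^{(\tilde a)}\Vert_{L^2(\partial\Omega)} \Vert z_h^{(\tilde a)}\Vert_{L^2(\partial\Omega)} \lesssim h^2$, invoking Lemma~\ref{lem:zhbound} for the last factor, just as you do. Your additional observations (that $F(\tilde a)=0$ since $z^{(\tilde a)}=0$, and the sketch of coarser consistency on $B_\rho(\tilde a)$) are consistent with the paper but not needed for its one-line argument.
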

\begin{proof}
We use Lemma~\ref{lem:zhbound} and obtain that
\begin{align*}
|F_{h,j}(\tilde{a})| \, \leq \, \Vert \phi_j \Vert_{C^1(\partial \Omega)} \Vert u_h^{(\tilde{a})} \Vert_{L^2(\partial \Omega)} \Vert z_h^{(\tilde{a})} \Vert_{L^2(\partial \Omega)} 
\, \lesssim \, h^2 \Vert \phi_j \Vert_{C^1(\partial \Omega)} \Vert u^{(\tilde{a})} \Vert_{H^2(\Omega)}^2\, .
\end{align*}
\end{proof}

\begin{lemma}\label{lem:normequ}
Suppose that 
$u^{(\tilde{a})}$ vanishes at most in isolated points on $\partial \Omega$.
Then, $\Vert a \Vert_* = \Vert u^{(\tilde{a})}a \Vert_{H^{-1/2}(\p \Omega)}$ is a norm on $C^1(\partial \Omega)$. 
\end{lemma}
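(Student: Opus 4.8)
The plan is to verify the three defining properties of a norm for $\Vert \cdot \Vert_*$ on the finite-dimensional space $C^1(\partial\Omega)$ (or more precisely on $V_J$, where it will be used). The only nontrivial property is definiteness: I must show that $\Vert u^{(\tilde a)} a \Vert_{H^{-1/2}(\partial\Omega)} = 0$ forces $a = 0$. Absolute homogeneity, $\Vert \lambda a \Vert_* = |\lambda|\,\Vert a\Vert_*$, is immediate from linearity of multiplication by the fixed function $u^{(\tilde a)}$ and homogeneity of the $H^{-1/2}$ norm; the triangle inequality follows likewise from $u^{(\tilde a)}(a+b) = u^{(\tilde a)}a + u^{(\tilde a)}b$ and the triangle inequality in $H^{-1/2}(\partial\Omega)$. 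Note also that the product $u^{(\tilde a)}a$ is a well-defined element of $H^{1/2}(\partial\Omega) \subset H^{-1/2}(\partial\Omega)$: by Lemma~\ref{lem:H2reg} we have $u^{(\tilde a)} \in H^2(\Omega)$, so its trace lies in $H^{1/2}(\partial\Omega)$, which (in dimension two, on each $C^{1,1}$ arc) is an algebra closed under multiplication by $C^1$ functions; hence the expression makes sense.

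For definiteness, suppose $\Vert u^{(\tilde a)} a \Vert_{H^{-1/2}(\partial\Omega)} = 0$. Then $u^{(\tilde a)} a = 0$ as an element of $H^{-1/2}(\partial\Omega)$, hence as an $L^2(\partial\Omega)$ function (it lies in $H^{1/2}$), hence pointwise almost everywhere on $\partial\Omega$. Now use the hypothesis that $u^{(\tilde a)}$ vanishes at most at isolated points on $\partial\Omega$: the set $\{u^{(\tilde a)} = 0\}$ has measure zero, so on its complement — which is a set of full measure in $\partial\Omega$ — we must have $a = 0$ almost everywhere. Since $a \in C^1(\partial\Omega)$ is continuous and vanishes on a dense subset of $\partial\Omega$, it vanishes identically. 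This proves $\Vert a \Vert_* = 0 \implies a = 0$, and together with the two algebraic properties establishes that $\Vert \cdot \Vert_*$ is a norm.

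The main (mild) obstacle is the bookkeeping around what ``vanishes at most at isolated points'' gives us and the fact that $u^{(\tilde a)}$ a priori is only $H^{3/2}$-regular up to the boundary with possibly limited pointwise control near corners of the polygon; but on each $C^{1,1}$ arc $(\partial\Omega)_j$ the trace of $u^{(\tilde a)}$ is $H^{3/2}$ hence continuous, so the zero set being isolated makes its complement open and dense on each arc, and continuity of $a$ then forces $a\equiv 0$ on that arc. Taking the union over $j=1,\dots,N$ closes the argument. No deep input is needed beyond the trace and Sobolev-algebra facts already invoked in the excerpt; this lemma is essentially a routine consequence of the non-vanishing assumption, and its real purpose is to provide, on the finite-dimensional space $V_J$, an equivalence of $\Vert\cdot\Vert_*$ with $\Vert\cdot\Vert_{C^1(\partial\Omega)}$, which will be used to transfer the stability estimate of Theorem~\ref{thm:stability} into an invertibility statement for $\dot F_h[\tilde a]$ in the proof of property~(iii).
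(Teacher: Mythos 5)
Your proposal is correct and follows essentially the same route as the paper: the only substantive step is definiteness, which you settle by observing that $u^{(\tilde a)}a \in H^{1/2}(\partial\Omega) \subset L^2(\partial\Omega)$ and that vanishing of the $H^{-1/2}$ norm forces the $L^2$ function to vanish — this is exactly the paper's duality computation $\Vert u^{(\tilde a)}a\Vert_{L^2}^2 \le \Vert u^{(\tilde a)}a\Vert_{H^{1/2}}\Vert u^{(\tilde a)}a\Vert_{H^{-1/2}} = 0$ — followed by the same continuity argument using that the zero set of $u^{(\tilde a)}$ is isolated, hence has dense complement. The explicit verification of homogeneity and the triangle inequality is a harmless addition the paper leaves implicit.
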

\begin{proof}
To get a contradiction, suppose that $a \ne 0$ but $\Vert a \Vert_* = 0$.

Since $u^{(\tilde{a})} \in H^{3/2}(\partial \Omega) \subset C(\partial \Omega)$ and $a \in C^1(\partial \Omega)$, it holds that $u^{(\tilde{a})}a \in H^{1/2}(\partial \Omega)$ and 
$\Vert u^{(\tilde{a})}a \Vert_{H^{1/2}(\partial \Omega)} \lesssim \Vert u^{(\tilde{a})} \Vert_{H^{3/2}(\partial \Omega)} \Vert a \Vert_{C^1(\partial \Omega)}$.
Then
 \begin{align*}
\frac{|(u^{(\tilde{a})}a, u^{(\tilde{a})}a)_{L^2(\partial \Omega})|}{\Vert u^{(\tilde{a})}a \Vert_{H^{1/2}(\p \Omega)}}
\, \leq \, 
\sup_{\phi \ne 0} \frac{|(u^{(\tilde{a})}a, \phi)_{L^2(\partial \Omega)}|}{\Vert \phi \Vert_{H^{1/2}(\p \Omega)}}
\, = \, \Vert u^{(\tilde{a})}a \Vert_{H^{-1/2}(\p \Omega)} \, = \, 0\, .
    \end{align*}
We get $u^{(\tilde{a})}a = 0$, 
however, since $u^{(\tilde{a})}a$ is continuous and $u^{(\tilde{a})}$ vanishes at isolated points only, $a$ must vanish, a contradiction.
\end{proof}

\begin{lemma}\label{lem:Fh}
Suppose that 
$u^{(\tilde{a})}$ vanishes at most at isolated points on $\partial \Omega$.
For a sufficiently small maximal mesh size $h$
the Fr\'echet derivative $\dot{F}_h[\tilde{a}]$ of $F_h$ at $\tilde{a}$ satisfies
    \begin{align}\label{eq:dotFhbound}
\Vert \eta \Vert_{C^1(\partial \Omega)}\, \lesssim \, \big| \dot{F}_h[\tilde{a}]\eta \big|_{\R^J} \, \quad \text{for all } \eta \in V_J\, \,.
    \end{align}
This implies that the inverse $(\dot{F}_h[\tilde{a}])^{-1}$ exists with a uniform bound in $0<h\leq h_0$ for some sufficiently small $h_0$.
The implicit constant in \eqref{eq:dotFhbound} depends on $\tilde{a}$ and on the dimension of the finite dimensional subspace $J$.
\end{lemma}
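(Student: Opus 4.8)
The plan is to prove the coercivity estimate \eqref{eq:dotFhbound} by pairing the two discrete sensitivity equations \eqref{eq:uhdot} and \eqref{eq:zhdot} (both for the direction $\eta$) against one another and exploiting the symmetry of $b$. First I would write $\eta = \sum_{j=1}^{J} c_j \phi_j$ with coordinate vector $c \in \R^J$ and, recalling \eqref{eq:dotFh}, test \eqref{eq:uhdot} with $v = \dot{z}_h^{(\tilde{a})}$, test \eqref{eq:zhdot} with $v = \dot{u}_h^{(\tilde{a})}$, and equate $b(\dot{u}_h^{(\tilde{a})},\dot{z}_h^{(\tilde{a})}) = b(\dot{z}_h^{(\tilde{a})},\dot{u}_h^{(\tilde{a})})$; summing against $c$ this produces the identity
\begin{align*}
\sum_{j=1}^{J} c_j \dot{F}_{h,j}[\tilde{a}]\eta \,=\, -\Vert \dot{u}_h^{(\tilde{a})} \Vert_{L^2(\omega)}^2 + 2\int_{\partial \Omega} \eta z_h^{(\tilde{a})} \dot{u}_h^{(\tilde{a})} \ds\, .
\end{align*}
(On the continuum level $z^{(\tilde{a})} = 0$ and the same manipulation gives $-\Vert u^{\prime(\tilde{a})} \Vert_{L^2(\omega)}^2$, with $u^{\prime(\tilde{a})}$ the solution of \eqref{eq:robinfrech3} for $a = \tilde{a}$.) Next I would bound the cross term: Lemma~\ref{lem:zhbound} gives $\Vert z_h^{(\tilde{a})} \Vert_{H^1(\Omega)} \lesssim h^2$, the a priori estimate for \eqref{eq:uhdot} together with Lemma~\ref{lem:H2reg} gives $\Vert \dot{u}_h^{(\tilde{a})} \Vert_{H^1(\Omega)} \lesssim \Vert \eta u_h^{(\tilde{a})} \Vert_{H^{-1/2}(\partial \Omega)} \lesssim \Vert \eta \Vert_{C^1(\partial \Omega)}$, and the boundary trace bounds then yield $\big| \int_{\partial \Omega} \eta z_h^{(\tilde{a})} \dot{u}_h^{(\tilde{a})} \ds \big| \lesssim h^2 \Vert \eta \Vert_{C^1(\partial \Omega)}^2$.

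The heart of the proof is the matching lower bound $\Vert \eta \Vert_{C^1(\partial \Omega)} \lesssim \Vert \dot{u}_h^{(\tilde{a})} \Vert_{L^2(\omega)}$ for $0 < h \le h_0$, and this is where the hypothesis that $u^{(\tilde{a})}$ vanishes at most at isolated points enters. I would apply Theorem~\ref{thm:stability} to $u^{\prime(\tilde{a})}$ from \eqref{eq:robinfrech3} with $P$ the projection onto the span of $\phi_1 u^{(\tilde{a})}, \dots, \phi_J u^{(\tilde{a})}$, so that $Q(\partial_\nu u^{\prime(\tilde{a})} + \tilde{a} u^{\prime(\tilde{a})}) = Q(-\eta u^{(\tilde{a})}) = 0$ and $\Delta u^{\prime(\tilde{a})} = 0$, which gives $\Vert u^{\prime(\tilde{a})} \Vert_{H^1(\Omega)} \lesssim \Vert u^{\prime(\tilde{a})} \Vert_{L^2(\omega)}$ exactly as in \eqref{eq:stabapp}. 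Since $u^{\prime(\tilde{a})}$ is harmonic, the boundedness of its normal trace (cf.\@ the remark following Theorem~\ref{thm:stability} and \cite[Lem.\@ 4.3]{McLean00}) and the ordinary trace bound give $\Vert \eta u^{(\tilde{a})} \Vert_{H^{-1/2}(\partial \Omega)} = \Vert \partial_\nu u^{\prime(\tilde{a})} + \tilde{a} u^{\prime(\tilde{a})} \Vert_{H^{-1/2}(\partial \Omega)} \lesssim \Vert u^{\prime(\tilde{a})} \Vert_{H^1(\Omega)} \lesssim \Vert u^{\prime(\tilde{a})} \Vert_{L^2(\omega)}$. By Lemma~\ref{lem:normequ} the map $\eta \mapsto \Vert u^{(\tilde{a})}\eta \Vert_{H^{-1/2}(\partial \Omega)}$ is a norm on $C^1(\partial \Omega)$, hence equivalent to $\Vert \cdot \Vert_{C^1(\partial \Omega)}$ on the finite dimensional space $V_J$, so $\Vert \eta \Vert_{C^1(\partial \Omega)} \lesssim \Vert u^{\prime(\tilde{a})} \Vert_{L^2(\omega)}$. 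Finally, just as in the proof of Corollary~\ref{cor:dotuh}, Lemmas~\ref{lem:H2reg} and \ref{lem:fembound} give $\Vert u^{\prime(\tilde{a})} - \dot{u}_h^{(\tilde{a})} \Vert_{H^1(\Omega)} \lesssim h \Vert \eta \Vert_{C^1(\partial \Omega)}$, and absorbing this term for $h \le h_0$ small enough yields $\Vert \eta \Vert_{C^1(\partial \Omega)} \lesssim \Vert \dot{u}_h^{(\tilde{a})} \Vert_{L^2(\omega)}$.

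To conclude, I would combine the identity, the $h^2$ cross-term bound and the lower bound: for $h$ small the $h^2 \Vert \eta \Vert_{C^1(\partial \Omega)}^2$ contribution is absorbed into the left-hand side, leaving
\begin{align*}
\Vert \eta \Vert_{C^1(\partial \Omega)}^2 \,\lesssim\, \Vert \dot{u}_h^{(\tilde{a})} \Vert_{L^2(\omega)}^2 \,\lesssim\, \Big| \sum_{j=1}^{J} c_j \dot{F}_{h,j}[\tilde{a}]\eta \Big| \,\le\, |c|_{\R^J} \big| \dot{F}_h[\tilde{a}]\eta \big|_{\R^J} \,\lesssim\, \Vert \eta \Vert_{C^1(\partial \Omega)} \big| \dot{F}_h[\tilde{a}]\eta \big|_{\R^J}\, ,
\end{align*}
where the Cauchy--Schwarz inequality in $\R^J$ and the equivalence of $|\cdot|_{\R^J}$ and $\Vert \cdot \Vert_{C^1(\partial \Omega)}$ on $V_J$ were used. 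Dividing by $\Vert \eta \Vert_{C^1(\partial \Omega)}$ gives \eqref{eq:dotFhbound}. Since $\dot{F}_h[\tilde{a}] : V_J \to \R^J$ is then an injective linear map between spaces of equal finite dimension, it is invertible, and \eqref{eq:dotFhbound} bounds $(\dot{F}_h[\tilde{a}])^{-1}$ uniformly for $0 < h \le h_0$, which is the statement required in item (iii) preceding Theorem~\ref{thm:main}. The hard part is the middle paragraph: one has to invoke the unique continuation stability of Theorem~\ref{thm:stability} in the direction that lower-bounds $\Vert u^{\prime(\tilde{a})} \Vert_{L^2(\omega)}$ and then convert the resulting $H^{-1/2}$ control of $\eta u^{(\tilde{a})}$ into a $C^1$ bound on $\eta$ via Lemma~\ref{lem:normequ}, for which the isolated-zeros assumption on $u^{(\tilde{a})}$ is indispensable.
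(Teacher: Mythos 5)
Your proof is correct, and it reaches the same destination as the paper's — both arguments rest on the combination of Theorem~\ref{thm:stability}, Lemma~\ref{lem:normequ} and the finite element error bounds to obtain $\Vert \eta \Vert_{C^1(\partial\Omega)} \lesssim \Vert \dot{u}_h^{(\tilde a)}\Vert_{L^2(\omega)}$ for small $h$, and both then relate $\Vert \dot{u}_h^{(\tilde a)}\Vert_{L^2(\omega)}^2$ to $\underline{\eta}\cdot\dot{F}_h[\tilde a]\eta$ up to an absorbable $O(h^2)\Vert\eta\Vert_{C^1(\partial\Omega)}^2$ remainder. The difference lies in how that second relation is obtained, and your route is genuinely cleaner. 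The paper performs the duality pairing at the continuum level: it introduces the auxiliary solutions $\dot u^{(\tilde a)}$ and $\dot z^{(\tilde a)}$, integrates by parts as in \eqref{eq:pidotuh}, and then has to control the mismatch between continuum and discrete sensitivities through the error estimates \eqref{eq:dotz-dotzh}--\eqref{eq:dotu-dotuh}, a Young-inequality argument with a free parameter $\varepsilon$, and the extra boundary term $T_1(\tilde a)$. You instead test \eqref{eq:uhdot} with $\dot z_h^{(\tilde a)}$ and \eqref{eq:zhdot} with $\dot u_h^{(\tilde a)}$ and use the symmetry of $b$, which yields the \emph{exact} discrete identity
\begin{align*}
\underline{\eta}\cdot\dot{F}_h[\tilde a]\eta \;=\; -\Vert \dot{u}_h^{(\tilde a)}\Vert_{L^2(\omega)}^2 + 2\int_{\partial\Omega}\eta\, z_h^{(\tilde a)}\dot{u}_h^{(\tilde a)}\ds\,
\end{align*}
(I verified the signs against \eqref{eq:bandf} and \eqref{eq:fhid}; it checks out). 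This eliminates $T_1$ and the $\varepsilon$-juggling entirely, leaving only the single cross term $2\int_{\partial\Omega}\eta z_h^{(\tilde a)}\dot u_h^{(\tilde a)}\ds$, which you correctly bound by $h^2\Vert\eta\Vert_{C^1(\partial\Omega)}^2$ via Lemma~\ref{lem:zhbound}, the trace inequality and the discrete a priori bound for $\dot u_h^{(\tilde a)}$. Your lower bound on $\Vert\dot u_h^{(\tilde a)}\Vert_{L^2(\omega)}$ also differs mildly in organization — you pass through $u'^{(\tilde a)}$ from \eqref{eq:robinfrech3} and bound $\Vert\eta u^{(\tilde a)}\Vert_{H^{-1/2}(\partial\Omega)}$ directly, whereas the paper passes through $\Vert\eta u_h^{(\tilde a)}\Vert_{H^{-1/2}(\partial\Omega)}$ and the continuum $\dot u^{(\tilde a)}$ before invoking Corollary~\ref{cor:dotuh} — but the content (stability theorem with $P$ onto $\mathrm{span}\{\phi_j u^{(\tilde a)}\}$, the harmonic normal-trace bound, Lemma~\ref{lem:normequ} with finite-dimensional norm equivalence, and absorption of the $h\Vert\eta\Vert_{C^1(\partial\Omega)}$ discretization error) is the same. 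What your approach buys is a shorter proof with fewer intermediate error estimates; what the paper's buys is that the integration-by-parts computation makes the structural analogy with the continuum optimality system explicit.
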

\begin{proof}
To each function $\eta \in V_J$ we associate a vector $\underline{\eta} \in \R^J$ with components $\eta_j$, $j=1,\dots, J$, for which holds
\begin{align}\label{eq:ne}
\eta \, = \, \sum_{j=1}^J \eta_j \phi_j \quad \text{with equivalent norms } \Vert \eta \Vert_{C^1(\partial \Omega)} \, \sim \, |\underline{\eta} |_{\R^J}\, .
\end{align}
We note that 
\begin{align}\label{eq:fhid}
\underline{\eta} \cdot \dot{F}_h[a] \eta \, = \, \int_{\partial \Omega} \eta (\dot{u}_h^{(a)} z_h^{(a)} + u_h^{(a)} \dot{z}_h^{(a)}) \ds \quad \text{for any } \eta \in V_J\, .
\end{align}
We consider the continuum counterpart to $\dot{z}_h^{(a)}$, which is 
\begin{subequations}
\begin{align}
\Delta \dot{z}^{(a)}\, &= \, -1_\omega \dot{u}_h^{(a)} \quad \text{in } \Omega \, ,\\
\partial_\nu \dot{z}^{(a)} + a\dot{z}^{(a)} \, &= \, -\eta z_h^{(a)} \quad \text{on } \partial \Omega \, . \label{eq:bcz}
\end{align}
\end{subequations}
Using Lemma~\ref{lem:fembound}, the $H^2$-regularity of Lemma~\ref{lem:H2reg} for $\dot{z}^{(\tilde{a})}$ as well as Lemma~\ref{lem:zhbound} yields
\begin{align}\label{eq:dotz-dotzh}
\begin{split}
\Vert \dot{z}^{(\tilde{a})} - \dot{z}_h^{(\tilde{a})} \Vert_{H^1(\Omega)} \, &\lesssim \, h \Vert \dot{z}^{(\tilde{a})} \Vert_{H^2(\Omega)} \\
\, &\lesssim \, h \big( \Vert \dot{u}_h^{(\tilde{a})} \Vert_{L^2(\omega)} + \Vert \eta \Vert_{C^1(\partial \Omega)} \Vert z_h^{(\tilde{a})} \Vert_{H^1(\Omega)} \big) \\
\, &\lesssim \, h \big( \Vert \dot{u}_h^{(\tilde{a})} \Vert_{L^2(\omega)} + h^2\Vert \eta \Vert_{C^1(\partial \Omega)} \big) \, .
\end{split}
\end{align}
In the same way one finds that
\begin{align}\label{eq:dotu-dotuh}
\Vert \dot{u}^{(\tilde{a})} - \dot{u}_h^{(\tilde{a})} \Vert_{H^1(\Omega)} \, \lesssim \, h\Vert \eta \Vert_{C^1(\partial \Omega)} \, . 
\end{align}
Using again that both functions $\dot{u}^{(\tilde{a})}$ and $\dot{z}^{(\tilde{a})}$ are in $H^2(\Omega)$
together with integration by parts yields
\begin{align}\label{eq:pidotuh}
\begin{split}
\Vert \dot{u}_h^{(\tilde{a})} \Vert_{L^2(\omega)}^2 \, &= \, \int_\Omega \Delta \dot{u}^{(\tilde{a})} \dot{z}_h^{(\tilde{a})} \dx - \int_\Omega \dot{u}_h^{(\tilde{a})} \Delta \dot{z}^{(\tilde{a})} \dx \\
&= \, - \int_{\Omega} \nabla \dot{u}^{(\tilde{a})} \cdot  \nabla \dot{z}_h^{(\tilde{a})} \dx + \int_{\Omega} \nabla \dot{u}_h^{(\tilde{a})} \cdot  \nabla \dot{z}^{(\tilde{a})} \dx \\
&\phantom{=\, } +\int_{\partial \Omega} \partial_{\nu}\dot{u}^{(\tilde{a})}  \dot{z}_h^{(\tilde{a})} \ds - \int_{\partial \Omega} \dot{u}_h^{(\tilde{a})}  \partial_{\nu}\dot{z}^{(\tilde{a})} \ds \, .
\end{split}
\end{align}
We continue by estimating the volume integrals. We use the Cauchy--Schwarz inequality and see that 
\begin{multline*}
\left| - \int_{\Omega} \nabla \dot{u}^{(\tilde{a})} \cdot  \nabla \dot{z}_h^{(\tilde{a})} \dx + \int_{\Omega} \nabla \dot{u}_h^{(\tilde{a})} \cdot  \nabla \dot{z}^{(\tilde{a})} \dx \right| \\
\, \leq \, \Vert \dot{u}_h^{(\tilde{a})} \Vert_{H^1(\Omega)} \Vert \dot{z}^{(\tilde{a})} - \dot{z}_h^{(\tilde{a})} \Vert_{H^1(\Omega)} + \Vert \dot{u}^{(\tilde{a})} - \dot{u}_h^{(\tilde{a})} \Vert_{H^1(\Omega)} \Vert \dot{z}_h^{(\tilde{a})} \Vert_{H^1(\Omega)} \, .
\end{multline*}
Next, we use that
for any $\varepsilon>0$ and any $a,b\in \R$ we have that $ab \leq 1/2(\varepsilon^2 a^2 + \varepsilon^{-2}b^2)$, the bounds \eqref{eq:dotz-dotzh} and \eqref{eq:dotu-dotuh}, Corollary~\ref{cor:dotuh} and Corollary~\ref{cor:dotzh} to see that
\begin{align}\label{eq:T1}
\begin{split}
\Vert \dot{u}_h^{(\tilde{a})} &\Vert_{H^1(\Omega)} \Vert \dot{z}^{(\tilde{a})} - \dot{z}_h^{(\tilde{a})} \Vert_{H^1(\Omega)} + \Vert \dot{u}^{(\tilde{a})} - \dot{u}_h^{(\tilde{a})} \Vert_{H^1(\Omega)} \Vert \dot{z}_h^{(\tilde{a})} \Vert_{H^1(\Omega)} \\
\, & \lesssim \, \varepsilon^2\big(\Vert \dot{u}_h^{(\tilde{a})} \Vert_{H^1(\Omega)}^2 + \Vert \dot{z}_h^{(\tilde{a})} \Vert_{H^1(\Omega)}^2\big) \\
&\phantom{\leq \, }+ \varepsilon^{-2}\big( \Vert \dot{z}^{(\tilde{a})} - \dot{z}_h^{(\tilde{a})} \Vert_{H^1(\Omega)}^2 + \Vert \dot{u}^{(\tilde{a})} - \dot{u}_h^{(\tilde{a})} \Vert_{H^1(\Omega)}^2 \big) \\
\, & \lesssim \, \varepsilon^2 \Vert \dot{u}_h^{(\tilde{a})} \Vert_{L^2(\omega)}^2 + \varepsilon^{-2}h^2 \Vert \dot{u}_h^{(\tilde{a})} \Vert_{L^2(\omega)}^2 + h^2 \Vert \eta \Vert_{C^1(\partial \Omega)}^2 \, .
\end{split}
\end{align}
Here, we do not emphasize the $\varepsilon^{-2}$ contribution in the last term.
We return to \eqref{eq:pidotuh} use the previous estimate and rearrange for a sufficiently small constant $\varepsilon$ and mesh size $h$ to obtain that
\begin{align}\label{eq:newdotubound}
\Vert \dot{u}_h^{(\tilde{a})} \Vert_{L^2(\omega)}^2 \, \lesssim \, \left| \int_{\partial \Omega} \partial_{\nu}\dot{u}^{(\tilde{a})}  \dot{z}_h^{(\tilde{a})} \ds - \int_{\partial \Omega} \dot{u}_h^{(\tilde{a})}  \partial_{\nu}\dot{z}^{(\tilde{a})} \ds \right| 
+ h^2 \Vert \eta \Vert_{C^1(\partial \Omega)}^2  \, .
\end{align}
We apply the boundary conditions in \eqref{eq:bcu} and \eqref{eq:bcz}, use the representation in \eqref{eq:fhid} and use the triangle inequality to see that
\begin{align*}
& \left| \int_{\partial \Omega} \partial_{\nu}\dot{u}^{(\tilde{a})}  \dot{z}_h^{(\tilde{a})} \ds - \int_{\partial \Omega} \dot{u}_h^{(\tilde{a})}  \partial_{\nu}\dot{z}^{(\tilde{a})} \ds \right| \\
&= \, \left| \int_{\partial \Omega} \tilde{a} \dot{u}_h^{(\tilde{a})} \dot{z}^{(\tilde{a})} \ds - \int_{\partial \Omega} \tilde{a} \dot{u}^{(\tilde{a})} \dot{z}_h^{(\tilde{a})}\ds + 2 \int_{\partial \Omega} \eta \dot{u}_h^{(\tilde{a})} z_h^{(\tilde{a})} \ds - \underline{\eta} \cdot \dot{F}_h[a] \eta\right| \\
&\leq \, \left| \int_{\partial \Omega} \tilde{a} \dot{u}_h^{(\tilde{a})} \dot{z}^{(\tilde{a})} \ds - \int_{\partial \Omega} \tilde{a} \dot{u}^{(\tilde{a})} \dot{z}_h^{(\tilde{a})}\ds\right| + 2 \left|\int_{\partial \Omega} \eta \dot{u}_h^{(\tilde{a})} z_h^{(\tilde{a})} \ds \right| 
+ \big|\underline{\eta} \cdot \dot{F}_h[\tilde{a}] \eta \big| \\
&=: \, T_{1}(\tilde{a})  + T_{2}(\tilde{a})  + \big|\underline{\eta} \cdot \dot{F}_h[\tilde{a}] \eta \big| \, .
\end{align*}
The trace estimate and \eqref{eq:T1} show that
\begin{align*}
T_{1}(\tilde{a}) 
\, &\lesssim \, \big( \Vert \dot{u}_h^{(\tilde{a})} \Vert_{L^2(\partial \Omega)} \Vert \dot{z}^{(\tilde{a})} - \dot{z}_h^{(\tilde{a})} \Vert_{L^2(\partial \Omega)} + \Vert \dot{z}_h^{(\tilde{a})} \Vert_{L^2(\partial \Omega)} \Vert \dot{u}^{(\tilde{a})} - \dot{u}_h^{(\tilde{a})} \Vert_{L^2(\partial \Omega)} \big) \\
\, & \lesssim \, \varepsilon^2 \Vert \dot{u}_h^{(\tilde{a})} \Vert_{L^2(\omega)}^2 + \varepsilon^{-2}h^2 \Vert \dot{u}_h^{(\tilde{a})} \Vert_{L^2(\omega)}^2 + h^2 \Vert \eta \Vert_{C^1(\partial \Omega)}^2 \, .
\end{align*}
Furthermore, by Corollary~\ref{cor:dotuh} and Lemma~\ref{lem:zhbound}
\begin{align*}
T_2(\tilde{a}) \, &\lesssim \, \Vert \eta \Vert_{C^1(\partial \Omega)} \Vert \dot{u}_h^{(\tilde{a})} \Vert_{H^1(\Omega)} \Vert z_h^{(\tilde{a})} \Vert_{H^1(\Omega)} \\
\, &\lesssim \, h^2 \Vert \eta \Vert_{C^1(\partial \Omega)}\Vert \dot{u}_h^{(\tilde{a})} \Vert_{L^2(\omega)} + h^3 \Vert \eta \Vert_{C^1(\partial \Omega)}^2 \, .
\end{align*}
Combining the bounds for $T_1(\tilde{a})$ and $T_2(\tilde{a})$ and rearranging \eqref{eq:newdotubound} now shows that for small enough $h$
\begin{align*}
\Vert \dot{u}_h^{(\tilde{a})} \Vert_{L^2(\omega)}^2 \, \lesssim \, \big|\underline{\eta} \cdot \dot{F}_h[\tilde{a}] \eta \big|
+ h^2 \Vert \eta \Vert_{C^1(\partial \Omega)}^2 \, 
\end{align*}
and a combination with Corollary~\ref{cor:dotuh} yields
\begin{align*}
\Vert \dot{u}_h^{(\tilde{a})} \Vert_{H^1(\Omega)}^2 \, \lesssim \, \big|\underline{\eta} \cdot \dot{F}_h[\tilde{a}] \eta \big|
+ h^2 \Vert \eta \Vert_{C^1(\partial \Omega)}^2 \, .
\end{align*}
For $\eta \in V_J$ we now use Lemma~\ref{lem:normequ} and Lemma~\ref{lem:fembound} and get that
\begin{align*}
\Vert \eta \Vert_{C^1(\partial \Omega)}^2 \, \lesssim \, \Vert \eta \Vert_{*}^2 
\, &\lesssim \, \Vert \eta( u^{(\tilde{a})} - u_h^{(\tilde{a})})\Vert_{H^{-1/2}(\partial \Omega)}^2 + \Vert \eta u_h^{(\tilde{a})}\Vert_{H^{-1/2}(\partial \Omega)}^2 \\
& \lesssim \, \Vert \eta u_h^{(\tilde{a})}\Vert_{H^{-1/2}(\partial \Omega)}^2 +  h^2 \Vert \eta \Vert_{C^1(\partial \Omega)}^2 \, .
\end{align*}
Now we apply the boundary condition in \eqref{eq:bcu} as well as the fact that 
\begin{align*}
\Vert \partial_\nu\dot{u}^{(\tilde{a})} \Vert_{H^{-1/2}(\partial \Omega)} \lesssim \Vert \dot{u}^{(\tilde{a})} \Vert_{H^{1}(\Omega)}
\end{align*}
(see e.g. \cite[Lem.\@ 4.3]{McLean00}), which gives that
\begin{align*}
\Vert \eta u_h^{(\tilde{a})}\Vert_{H^{-1/2}(\partial \Omega)}^2 \, = \, \Vert \partial_\nu \dot{u}^{(\tilde{a})} + \tilde{a}\dot{u}^{(\tilde{a})}\Vert_{H^{-1/2}(\partial \Omega)}^2 \, \lesssim \, \Vert \dot{u}^{(\tilde{a})} \Vert_{H^1(\Omega)}^2 
\end{align*}
and by \eqref{eq:dotu-dotuh},
\begin{align*}
\Vert \dot{u}^{(\tilde{a})} \Vert_{H^1(\Omega)}^2  \, \lesssim \, \Vert \dot{u}_h^{(\tilde{a})} \Vert_{H^1(\Omega)}^2 + h^2 \Vert \eta \Vert_{C^1(\partial \Omega)}^2 \, .
\end{align*}
Combining the last inequalities finally yields
\begin{align*}
\Vert \eta \Vert_{C^1(\partial \Omega)}^2 \, \lesssim \, \big|\underline{\eta} \cdot \dot{F}_h[\tilde{a}] \eta \big|
+ h^2 \Vert \eta \Vert_{C^1(\partial \Omega)}^2\, .
\end{align*}
For small enough $h$, the last term can be absorbed into the left side. Finally, the Cauchy-Schwarz inequality in $\R^J$ together with the norm equivalence in \eqref{eq:ne} yields
\begin{align*}
\Vert \eta \Vert_{C^1(\partial \Omega)}\, \lesssim \, \big| \dot{F}_h[\tilde{a}]\eta \big|_{\R^J}\, .
    \end{align*}
\end{proof}
\begin{remark}
Theorem~\ref{thm:main} guarantees convergence of the Newton method for initial guesses, which are sufficiently close to $a_h$.
This is a purely local convergence property.
Choosing $a = \rho$ with $\rho>0$ in the Robin problem \eqref{eq:robin-gen} and letting $\rho \to \infty$ turns the Robin boundary condition into a homogeneous Dirichlet condition, i.e., $u=0$ on $\partial \Omega$. With this property, the function $F$ from \eqref{eq:F} vanishes. This shows that global convergence cannot be expected from the Newton method. 
\end{remark}
\section{Perturbation Analysis}
In this short section we study the situation, in which $q = u^{(\tilde{a})}|_\omega$ is perturbed by some noise.
For this purpose, let $q^\delta = q + \delta$, where $\delta \in L^2(\omega)$ is a perturbation with $\Vert \delta \Vert_{L^2(\omega)}$ sufficiently small.
Such a perturbation affects $z_h^{(a)}$, the solution of \eqref{eq:zh}. We denote by $z_h^{(a),\delta}$ the solution of \eqref{eq:zh}, when $q$ is replaced by $q^\delta$.
Using Lemma~\ref{lem:zhbound} and the well-posedness of the problem \eqref{eq:zh} yields that
\begin{align*}
\Vert z_h^{(\tilde{a}),\delta} \Vert_{H^1(\Omega)} 
\, \lesssim \, h^2 \Vert u^{(\tilde{a})} \Vert_{H^2(\Omega)} + \Vert \delta \Vert_{L^2(\omega)}\, .
\end{align*}
We define a function $F_h^\delta$ that involves noisy data by
\begin{align}\label{eq:Fhdelta}
F_h^\delta (a) \, = \,  (F_{h,1}^\delta(a), \dots, F_{h,J}^\delta(a) )\, , \; \; \text{where } F_{h,j}^\delta(a) \, = \, \int_{\partial \Omega} \phi_j u_h^{(a)} z_h^{(a),\delta} \ds\, \, .
\end{align}

Using the very same framework as in Theorem~\ref{thm:frechz}, the function $\dot{z}_h^{(a),\delta}$, the Gateaux derivative of $z_h^{(a),\delta}$ with respect to the Robin parameter, can be established.
With this function, the Fr\'echet derivative $\dot{F}_h^\delta[a]$ of \eqref{eq:Fhdelta} is given as in \eqref{eq:dotFh} with $z_h^{(a)}$ and $\dot{z}_h^{(a)}$ replaced by $z_h^{(a),\delta}$ and $\dot{z}_h^{(a),\delta}$, respectively.
We see that Corollary~\ref{cor:contF} still holds when $\dot{F}_h$ is replaced by $\dot{F}_h^\delta$, while \eqref{eq:F_hcons2} needs to be replaced by
\begin{align}\label{eq:Fhdeltabound}
\left|F_h^\delta (\tilde{a})\right|_{\R^J} \, \lesssim \, h^2 + \Vert \delta \Vert_{L^2(\omega)}\, .
\end{align}
We can show the following Lemma, which is similar to the unperturbed case in Lemma~\eqref{lem:Fh}.
\begin{lemma}\label{lem:Fhdelta}
Suppose that
$u^{(\tilde{a})}$ vanishes at most at isolated points on $\partial \Omega$
and that $\Vert \delta \Vert_{L^2(\omega)}$ is sufficiently small. For a sufficiently small maximal mesh size $h$
the Fr\'echet derivative $\dot{F}_h^\delta[\tilde{a}]$ of $F_h^\delta$ at $\tilde{a}$ satisfies
    \begin{align}\label{eq:dotFhdeltabound}
\Vert \eta \Vert_{C^1(\partial \Omega)}\, \lesssim \, \big| \dot{F}_h^\delta[\tilde{a}]\eta \big|_{\R^J} \, \quad \text{for all } \eta \in V_J\, \,.
    \end{align}
This implies that the inverse $(\dot{F}_h^\delta[\tilde{a}])^{-1}$ exists with a uniform bound in $0<h\leq h_0$ for some sufficiently small $h_0$.
The implicit constant in \eqref{eq:dotFhdeltabound} depends on $\tilde{a}$ and on the dimension of the finite dimensional subspace $J$.
\end{lemma}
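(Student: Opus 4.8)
The plan is to mimic closely the proof of Lemma~\ref{lem:Fh}, keeping careful track of the additional contributions coming from the noise term $\delta$. Set $P:H^{-1/2}(\partial\Omega)\to\widetilde V_J^{(u^{(\tilde a)})}$ as in the proof of Corollary~\ref{cor:dotuh}, and introduce the continuum counterparts $\dot u^{(\tilde a)}$ of \eqref{eq:uhdot} and the perturbed $\dot z^{(\tilde a),\delta}$ solving $\Delta\dot z^{(\tilde a),\delta}=-1_\omega\dot u_h^{(\tilde a)}$ in $\Omega$, $\partial_\nu\dot z^{(\tilde a),\delta}+\tilde a\dot z^{(\tilde a),\delta}=-\eta z_h^{(\tilde a),\delta}$ on $\partial\Omega$. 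First I would reprove the finite element difference estimates: \eqref{eq:dotu-dotuh} is unchanged, while the analogue of \eqref{eq:dotz-dotzh} now reads
\begin{align*}
\Vert \dot z^{(\tilde a),\delta}-\dot z_h^{(\tilde a),\delta}\Vert_{H^1(\Omega)}\,\lesssim\, h\big(\Vert\dot u_h^{(\tilde a)}\Vert_{L^2(\omega)}+\Vert\eta\Vert_{C^1(\partial\Omega)}(h^2+\Vert\delta\Vert_{L^2(\omega)})\big)\,,
\end{align*}
because $\Vert z_h^{(\tilde a),\delta}\Vert_{H^1(\Omega)}\lesssim h^2\Vert u^{(\tilde a)}\Vert_{H^2(\Omega)}+\Vert\delta\Vert_{L^2(\omega)}$ from the displayed bound preceding \eqref{eq:Fhdelta}. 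Corollary~\ref{cor:dotuh} still applies verbatim to $\dot u_h^{(\tilde a)}$, and the analogue of Corollary~\ref{cor:dotzh} gives $\Vert\dot z_h^{(\tilde a),\delta}\Vert_{H^1(\Omega)}\lesssim\Vert\dot u_h^{(\tilde a)}\Vert_{L^2(\omega)}+(h^2+\Vert\delta\Vert_{L^2(\omega)})\Vert\eta\Vert_{C^1(\partial\Omega)}$.

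Next I would run the integration-by-parts identity \eqref{eq:pidotuh} with $\dot z_h^{(\tilde a)}$ replaced by $\dot z_h^{(\tilde a),\delta}$ and $\dot z^{(\tilde a)}$ replaced by $\dot z^{(\tilde a),\delta}$. The volume-integral estimate \eqref{eq:T1} goes through with $h^2\Vert\eta\Vert_{C^1(\partial\Omega)}^2$ upgraded to $(h^2+\Vert\delta\Vert_{L^2(\omega)})^2\Vert\eta\Vert_{C^1(\partial\Omega)}^2$ after using Young's inequality; the boundary terms are handled exactly as before, applying the boundary conditions \eqref{eq:bcu} and the perturbed version of \eqref{eq:bcz} and recognizing $\underline\eta\cdot\dot F_h^\delta[\tilde a]\eta=\int_{\partial\Omega}\eta(\dot u_h^{(\tilde a)}z_h^{(\tilde a),\delta}+u_h^{(\tilde a)}\dot z_h^{(\tilde a),\delta})\ds$. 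The terms $T_1(\tilde a)$ and $T_2(\tilde a)$ acquire the same $\Vert\delta\Vert_{L^2(\omega)}$ corrections, and after absorbing the $\varepsilon^2\Vert\dot u_h^{(\tilde a)}\Vert_{L^2(\omega)}^2$ and $\varepsilon^{-2}h^2\Vert\dot u_h^{(\tilde a)}\Vert_{L^2(\omega)}^2$ pieces into the left side for small $\varepsilon$ and $h$, I obtain
\begin{align*}
\Vert\dot u_h^{(\tilde a)}\Vert_{H^1(\Omega)}^2\,\lesssim\,\big|\underline\eta\cdot\dot F_h^\delta[\tilde a]\eta\big|+(h^2+\Vert\delta\Vert_{L^2(\omega)})^2\Vert\eta\Vert_{C^1(\partial\Omega)}^2\,.
\end{align*}

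The final step is unchanged in structure: by Lemma~\ref{lem:normequ} and Lemma~\ref{lem:fembound}, $\Vert\eta\Vert_{C^1(\partial\Omega)}^2\lesssim\Vert\eta u_h^{(\tilde a)}\Vert_{H^{-1/2}(\partial\Omega)}^2+h^2\Vert\eta\Vert_{C^1(\partial\Omega)}^2$, then $\Vert\eta u_h^{(\tilde a)}\Vert_{H^{-1/2}(\partial\Omega)}=\Vert\partial_\nu\dot u^{(\tilde a)}+\tilde a\dot u^{(\tilde a)}\Vert_{H^{-1/2}(\partial\Omega)}\lesssim\Vert\dot u^{(\tilde a)}\Vert_{H^1(\Omega)}\lesssim\Vert\dot u_h^{(\tilde a)}\Vert_{H^1(\Omega)}+h\Vert\eta\Vert_{C^1(\partial\Omega)}$ via \eqref{eq:dotu-dotuh} and \cite[Lem.\@ 4.3]{McLean00}. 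Chaining everything yields $\Vert\eta\Vert_{C^1(\partial\Omega)}^2\lesssim|\underline\eta\cdot\dot F_h^\delta[\tilde a]\eta|+(h^2+\Vert\delta\Vert_{L^2(\omega)})^2\Vert\eta\Vert_{C^1(\partial\Omega)}^2$, and for $h$ and $\Vert\delta\Vert_{L^2(\omega)}$ small enough the last term is absorbed, giving \eqref{eq:dotFhdeltabound} after Cauchy--Schwarz in $\R^J$ and the norm equivalence \eqref{eq:ne}; uniform invertibility of $\dot F_h^\delta[\tilde a]$ follows as in Lemma~\ref{lem:Fh}. The main obstacle is purely bookkeeping: one must verify that every place where $h^2$ appeared as the "smallness parameter" multiplying $\Vert\eta\Vert_{C^1(\partial\Omega)}$ in the proof of Lemma~\ref{lem:Fh} is now replaced consistently by $h^2+\Vert\delta\Vert_{L^2(\omega)}$ (in particular in the bound for $\Vert z_h^{(\tilde a),\delta}\Vert_{H^1(\Omega)}$, which propagates into $T_2$ and into the analogue of \eqref{eq:dotz-dotzh}), and that the absorption arguments still work, which requires $\Vert\delta\Vert_{L^2(\omega)}$ — not merely $h$ — to be small; no new analytic input beyond what is already in the excerpt is needed.
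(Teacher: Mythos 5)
Your proposal is correct and follows essentially the same route as the paper: rerun the proof of Lemma~\ref{lem:Fh} with $z_h^{(\tilde a)},\dot z_h^{(\tilde a)}$ replaced by their perturbed versions, track the extra $\Vert\delta\Vert_{L^2(\omega)}$ contributions through the key intermediate estimate, and absorb them using the smallness of both $h$ and $\Vert\delta\Vert_{L^2(\omega)}$. The only (harmless) difference is that you compare $\dot z_h^{(\tilde a),\delta}$ to a perturbed continuum counterpart $\dot z^{(\tilde a),\delta}$ via the FEM error bound, whereas the paper keeps the unperturbed $\dot z^{(\tilde a)}$ and inserts $\dot z_h^{(\tilde a)}$ by the triangle inequality plus well-posedness, yielding an equivalent (in fact slightly weaker) bound.
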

\begin{proof}
The proof can be done as the proof of Lemma~\ref{lem:Fh} with some small modifications. We require the estimate
\begin{align*}
&\Vert \dot{z}^{(\tilde{a})} - \dot{z}_h^{(\tilde{a}),\delta} \Vert_{H^1(\Omega)} \\
\, &\lesssim \, \Vert \dot{z}^{(\tilde{a})} - \dot{z}_h^{(\tilde{a})} \Vert_{H^1(\Omega)} + \Vert \dot{z}_h^{(\tilde{a})}- \dot{z}_h^{(\tilde{a}),\delta}  \Vert_{H^1(\Omega)}  \\
& \lesssim \, 
h \Vert \dot{u}_h^{(\tilde{a})} \Vert_{L^2(\omega)} + \Vert \eta \Vert_{C^1(\partial \Omega)} \Vert z_h^{(\tilde{a})} - z_h^{(\tilde{a}),\delta} \Vert_{H^1(\Omega)} 
+  h^3\Vert \eta \Vert_{C^1(\partial \Omega)}  \\
& \lesssim \, h \Vert \dot{u}_h^{(\tilde{a})} \Vert_{L^2(\omega)} + h^3\Vert \eta \Vert_{C^1(\partial \Omega)} + \Vert \eta \Vert_{C^1(\partial \Omega)} \Vert \delta \Vert_{L^2(\omega)} \, ,
\end{align*} 
that we deduced by using the bound from \eqref{eq:dotz-dotzh} as well as the well-posedness estimate of $\dot{z}_h^{(\tilde{a})}- \dot{z}_h^{(\tilde{a}),\delta}$.
Finally, all the rearrangements that appear in the proof of Lemma~\ref{lem:Fh} can still be done under the assumption that $\Vert \delta \Vert_{L^2(\omega)}$ is small enough.
\end{proof}
The existence of the element $a_h \in B_\rho(\tilde{a})$ from Theorem~\ref{thm:main} is guaranteed by \cite[Thm.\@ 3.6]{keller75}, which uses the Banach fixed point theorem and the consistency from Corollary~\ref{cor:cons}. 
Under the assumption that $\Vert \delta \Vert_{L^2(\omega)}$ is sufficiently small, we can achieve a similar result that we summarize in the next theorem.
\begin{theorem}
Let $\tilde{a} \in V_J$ and let $u^{(\tilde{a})} \in H^1(\Omega)$ denote the solution to \eqref{eq:robin-gen-ex} for $a=\tilde{a}$ and for some $f\in L^2(\Omega)$ and $g\in H^{1/2}(\partial \Omega)$. Moreover, let $\delta \in L^2(\omega)$ with $\Vert \delta \Vert_{L^2(\omega)}$ sufficiently small.
Assume that
$u^{(\tilde{a})}$ vanishes at most at isolated points
on $\partial \Omega$ and that $\tilde{a} \in V_J$. Moreover, let $q^\delta=u^{(\tilde{a})}|_\omega + \delta$ for some $\omega \subset \Omega$ and $f$ and $g$ be known. 
Then, 
\begin{itemize}
\item[(a)] For some sufficiently small $0<h \leq h_0$ and $\rho>0$, there is a unique $a_h^\delta \in B_\rho(\tilde{a})$ such that $F_h^\delta(a_h^\delta) = 0$. Furthermore,
\begin{align}\label{eq:h2boundpert}
\Vert \tilde{a} - a_h^\delta \Vert_{C^1(\partial \Omega)} \, \lesssim \, h^2 + \Vert \delta \Vert_{L^2(\omega)}\, .
\end{align}
The implicit constant in \eqref{eq:h2boundpert} depends on $\tilde{a}$ and on the dimension of the finite dimensional subspace $J$.
\item[(b)] For a sufficiently good initial guess the Newton iterates corresponding to the equation $F_h^\delta(x) = 0$ converge quadratically to the unique root $a_h^\delta \in B_\rho(\tilde{a})$ from part (a).
\end{itemize}
\end{theorem}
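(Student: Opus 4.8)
The plan is to check that the perturbed family $\{F_h^\delta\}_{0<h\le h_0}$ satisfies the same three structural hypotheses (i)--(iii) that were used in the proof of Theorem~\ref{thm:main} to invoke \cite[Thm.\@ 3.6]{keller75} and \cite[Thm.\@ 3.7]{keller75}, the only change being that the consistency order $h^2$ is replaced by $h^2+\Vert\delta\Vert_{L^2(\omega)}$. Once these hypotheses are in place, Keller's abstract results apply essentially verbatim and yield (a) and (b).

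First I would assemble the three ingredients. \textbf{(i) Uniform Lipschitz continuity.} As already noted in this section, passing from $F_h$ to $F_h^\delta$ only replaces $z_h^{(a)}$ and $\dot{z}_h^{(a)}$ by $z_h^{(a),\delta}$ and $\dot{z}_h^{(a),\delta}$, which solve the same equations with a right-hand side modified by the fixed datum $\delta$; since $\delta$ enters affinely, the difference estimates \eqref{eq:contdotu} and \eqref{eq:contdotz2} (and their $u$-counterparts) carry over unchanged, so the argument of Corollary~\ref{cor:contF} shows that $\dot{F}_h^\delta[\cdot]$ is Lipschitz on a ball $B_\rho(\tilde{a})$ with a constant independent of $h$ for $0<h\le h_0$ and of $\delta$ for $\Vert\delta\Vert_{L^2(\omega)}$ small. \textbf{(ii) Consistency of order $h^2+\Vert\delta\Vert_{L^2(\omega)}$.} Bounding $|F_{h,j}^\delta(\tilde{a})|\le\Vert\phi_j\Vert_{C^1(\partial\Omega)}\Vert u_h^{(\tilde{a})}\Vert_{L^2(\partial\Omega)}\Vert z_h^{(\tilde{a}),\delta}\Vert_{L^2(\partial\Omega)}$ and using the displayed estimate $\Vert z_h^{(\tilde{a}),\delta}\Vert_{H^1(\Omega)}\lesssim h^2\Vert u^{(\tilde{a})}\Vert_{H^2(\Omega)}+\Vert\delta\Vert_{L^2(\omega)}$ together with the trace inequality and the a priori bound on $u_h^{(\tilde{a})}$ gives \eqref{eq:Fhdeltabound}. \textbf{(iii) Uniform invertibility of $\dot{F}_h^\delta[\tilde{a}]$.} This is precisely Lemma~\ref{lem:Fhdelta}, whose proof follows that of Lemma~\ref{lem:Fh} up to extra terms of the form $\Vert\eta\Vert_{C^1(\partial\Omega)}\Vert\delta\Vert_{L^2(\omega)}$ that are absorbed once $\Vert\delta\Vert_{L^2(\omega)}$ is small.

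With (i)--(iii) at hand, part (a) is \cite[Thm.\@ 3.6]{keller75}: its Banach fixed point (contraction mapping) argument produces, for $0<h\le h_0$ and a radius $\rho>0$ depending only on the Lipschitz constant from (i) and the invertibility bound from (iii)---hence on neither $h$ nor $\delta$---a unique zero $a_h^\delta\in B_\rho(\tilde{a})$ of $F_h^\delta$, together with $\Vert\tilde{a}-a_h^\delta\Vert_{C^1(\partial\Omega)}\lesssim|F_h^\delta(\tilde{a})|_{\R^J}$; combined with (ii) this is \eqref{eq:h2boundpert}. Part (b) is \cite[Thm.\@ 3.7]{keller75}: the Lipschitz continuity of $\dot{F}_h^\delta$ on $B_\rho(\tilde{a})$ from (i) and a uniform bound on $(\dot{F}_h^\delta[a])^{-1}$ for $a$ near $\tilde{a}$ (obtained from (iii) and (i) by a perturbation argument) place us in the classical local convergence setting for Newton's method, so iterates started sufficiently close to $a_h^\delta$ converge quadratically to it.

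The main obstacle is bookkeeping of the smallness condition on $\Vert\delta\Vert_{L^2(\omega)}$: one must verify that the threshold below which $\delta$ counts as ``sufficiently small'' can be chosen uniformly in $h\in(0,h_0]$, so that the absorption steps---in the analogue of Lemma~\ref{lem:Fh} and in Keller's contraction estimate---do not degenerate as $h\to 0$. This holds because, as in the sketch of Lemma~\ref{lem:Fhdelta}, $\delta$ enters only through terms $\Vert\eta\Vert_{C^1(\partial\Omega)}\Vert\delta\Vert_{L^2(\omega)}$ (or $\Vert\delta\Vert_{L^2(\omega)}^2$) with $h$-independent implicit constants, while the remaining estimates repeat those of Section~\ref{sec:robinrec} word for word.
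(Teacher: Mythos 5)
Your proposal is correct and follows essentially the same route as the paper: verify the Lipschitz continuity of $\dot{F}_h^\delta$, the modified consistency bound \eqref{eq:Fhdeltabound}, and the uniform invertibility from Lemma~\ref{lem:Fhdelta}, then run Keller's contraction argument with the consistency order $h^2$ replaced by $h^2+\Vert\delta\Vert_{L^2(\omega)}$. The only cosmetic difference is that the paper re-derives the Banach fixed point step explicitly via the map $G(a)=a-\dot{F}_h^\delta[\tilde{a}]^{-1}F_h^\delta(a)$ (since the noisy consistency term does not fit Keller's hypotheses verbatim), a point you anticipate when you note the needed uniformity of the smallness condition on $\Vert\delta\Vert_{L^2(\omega)}$.
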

\begin{proof}
We follow the proofs of \cite[Thm.\@ 3.4, Thm.\@ 3.6]{keller75}.
We define the mapping $G(a) = a - \dot{F}_h^\delta[\tilde{a}]^{-1}F_h^\delta(a)$ and note that for a sufficiently small $\rho>0$ and for $a_1,a_2 \in B_\rho(\tilde{a})$ there holds that
\begin{align*}
G(a_1) - G(a_2) \, = \, \dot{F}_h^\delta[\tilde{a}]^{-1}\left(\dot{F}_h^\delta[\tilde{a}] - \check{F}_h^\delta(a_1,a_2) \right)(a_1-a_2)\, ,
\end{align*}
where 
\begin{align*}
\check{F}_h^\delta(a_1,a_2) \, = \, \int_0^1 \dot{F}_h^\delta[ta_1 + (1-t)a_2)] \dif t \, .
\end{align*}
One can now use the Lipschitz continuity of $\dot{F}_h^\delta$ as well as Lemma~\ref{lem:Fhdelta} to see that 
\begin{align}\label{eq:Glip}
\Vert G(a_1) - G(a_2)\Vert_{C^1(\partial \Omega)} \, \leq \, C \rho \Vert a_1 - a_2 \Vert_{C^1(\partial \Omega)}\, . 
\end{align}
for some $C>0$. Assuming that $\rho$ is small enough yields that $G$ is a contraction.
Moreover, by using \eqref{eq:Fhdeltabound} we see that
\begin{align*}
\Vert \tilde{a} - G(\tilde{a}) \Vert_{C^1(\partial \Omega)} \, \lesssim \,  |F_h^\delta(\tilde{a})|_{\R^J} \, \lesssim \,  h^2 + \Vert \delta \Vert_{L^2(\omega)}\, .
\end{align*}
We now need to assume that both $h$ and $\Vert \delta\Vert_{L^2(\omega)}$ are so small that
\begin{align}\label{eq:acont}
\Vert \tilde{a} - G(\tilde{a}) \Vert_{C^1(\partial \Omega)} \, \leq \,  (1-C\rho) \rho \, ,
\end{align}
where $C>0$ is as in \eqref{eq:Glip}. In this case $G: B_\rho(\tilde{a}) \to B_\rho(\tilde{a})$, since for any $a\in B_\rho(\tilde{a})$, by using \eqref{eq:Glip} and \eqref{eq:acont} we get that
\begin{align*}
\Vert \tilde{a} - G(a) \Vert_{C^1(\partial \Omega)} \, \leq \, \Vert \tilde{a} - G(\tilde{a}) \Vert_{C^1(\partial \Omega)} + \Vert G(\tilde{a}) - G(a) \Vert_{C^1(\partial \Omega)} \, \leq \, \rho\, .
\end{align*}
Part (b) can be shown as \cite[Thm.\@ 3.7]{keller75} by additionally using similar considerations as before.
\end{proof}

\section{Numerical examples}
In this section, the aim is to numerically reconstruct the Robin function $\tilde{a}$ from the given $q = u^{(\tilde{a})}|_{\omega}$ (or likewise from $q^\delta = q+\delta$), where $\omega \subset \Omega$ and $u$ solves \eqref{eq:robin-gen-ex}. 
Throughout this section we set $\Omega = [0,1]^2$ and denote by $p : [0,4] \to \partial \Omega$ the positively oriented curve that parametrizes $\partial \Omega$ by arc length starting in the bottom left corner of the domain.
We assume that the function $\tilde{a}$ with $0 < a_0 \leq \tilde{a}$ lies in the $J = J_1+J_2$ dimensional space $V_{J_1,J_2} \subset C^1(\partial \Omega)$, which is the span of trigonometric functions 
\begin{align}\label{eq:phifun}
\phi_{1,m-1}(t) \, = \, \frac{1}{2} \cos\left( (m-1) \frac{\pi}{2}t\right)\, ,\quad \phi_{2,n}(t) \, = \, \frac{1}{2} \sin\left( n \frac{\pi}{2}t\right)\, ,
\end{align}
where $m=1,\dots,J_1$ and $n=1,\dots, J_2$ and $t \in [0,4]$ satisfies $p(t) = (x,y) \in \partial \Omega$.
Any function $a \in V_{J_1,J_2} $ can thus be written as
\begin{align}\label{eq:atilde}
a \, = \, \sum_{m=1}^{J_1} \alpha_m \phi_{1,m-1} + \sum_{n=1}^{J_2} \beta_n \phi_{2,n}\, ,
\end{align}
or in other words, $a \in V_{J_1,J_2} $ can be uniquely associated to vectors $\alpha\in \R^{J_1}$ and $\beta\in \R^{J_2}$ with entries $\alpha_{m}$, $m=1,\dots, J_1$ and $\beta_{n}$, $n = 1, \dots, J_2$, respectively.
Our approach in reconstructing $\tilde{a}$ is to simultaneously find the $J$ zeros of $F_{h,j}$ from \eqref{eq:Fh} by applying Newton's method to $F_h$. 
In $F_{h,j}$ from \eqref{eq:Fh}, $\phi_j$ is replaced by $\phi_{1,m-1}$ and $\phi_{2,n}$ from \eqref{eq:phifun}. 
Recall that, by Theorem~\ref{thm:main}, for a sufficiently close initial guess to $\tilde{a}$, Newton's method does converge to a root of $F_h$ quadratically.
It turns out that, when using noisy data, additional stability in the reconstruction can be achieved by using a simple backtracking line search. 
We incorporate this in the following way.
Denote by $x_k\in \R^J$ the $k$th iterate of the root search and by $a(x_k) \in V_{J_1,J_2} $ the linear combination of the functions in \eqref{eq:phifun} with individual contributions of each function determined by the entries of the vector $x_k \in \R^J$.
Once we determined the update $d_k \in \R^J$ by using Newton's method, we find the smallest number $\kappa \in \N_0$ such that
\begin{align}\label{eq:linesearchcrit}
\left|F_h\left(a\left(x_k + (0.5)^\kappa d_k\right)\right)\right| \, \leq \,  \left|F_h\left(a(x_k\right))\right| \quad \text{and } \quad a\left(x_k + (0.5)^\kappa d_k\right) > 0
\end{align} 
and define the next iterate to be $x_{k+1} = x_k + (0.5)^\kappa d_k$.
For the convenience of the reader we provide a detailed description of the reconstruction process including the computation of the Jacobi matrix in Newton's method in Algorithm~\ref{algo:newton}.
\begin{algorithm}[t]\label{algo:newton}
\DontPrintSemicolon
  
  \KwData{$f\in L^2(\Omega)$, $g \in H^{1/2}(\partial \Omega)$, $q\in L^2(\Omega)$, $\omega \subset \Omega$, $\phi_j$, $j=1,\dots, J$}
  \KwInput{$N, J_1, J_2 \in \N,$ $\alpha^{(0)} \in \R^{J_1}$, $\beta^{(0)} \in \R^{J_2}$, $\mathrm{tol}>0$}
  \KwOutput{$a_h$}

 Discretize the domain $\Omega$ into $(N+1)^2$ equidistant points. Then, $h=\sqrt{2}/N$\;
 $x_0 \gets \text{concatenate}(\alpha^{(0)},\, \beta^{(0)}) \in \R^{J}$ and $x_{-1} \gets 0 \in \R^J$\; 
 Def.\@ $a$ with entries $\alpha_{m}$ of $\alpha^{(0)}$ and entries $\beta_n$ of $\beta^{(0)}$ as in \eqref{eq:atilde} \;
 $k \gets 0 \in \N_0$ \;
 \While{$|x_k - x_{k-1}|_{\R^J}>\mathrm{tol}$}
   {
   Comp.\@ approximations $u_h^{(a)} \in V_h$ as in \eqref{eq:feform} and $z_h^{(a)} \in V_h$ as in \eqref{eq:zh} \label{algstep1} \;
  Comp.\@ $F_h(a)$ from \eqref{eq:Fh} with $u_h^{(a)}$ and $z_h^{(a)}$ from step \ref{algstep1} and $\phi_j$ from \eqref{eq:phifun}\;
   \For{$j=1,\dots,J$}    
        { 
       	Def.\@ $\eta$ as a linear combination of $\phi_{1,m-1}$ and $\phi_{2,n}$ from \eqref{eq:phifun} as in \eqref{eq:atilde} with $\alpha_i=\delta_{i,j}$, $\beta_i=0$ \textbf{if} $j\leq J_1$ and $\alpha_i=0$, $\beta_i=\delta_{i,j-J_1}$ \textbf{else} \label{algstep2}\;
       	Comp.\@ approximations $\dot{u}_h^{(a)} \in V_h$ as in \eqref{eq:uhdot} and $\dot{z}_h^{(a)} \in V_h$ as in \eqref{eq:zhdot} 
       	using $\eta$ from step \ref{algstep2} and $u_h^{(a)}$ and $z_h^{(a)}$ from step \ref{algstep1} \label{algstep3}\;
       	Comp.\@ $\dot{F}_h[a]\eta$ from \eqref{eq:dotFh} with $u_h^{(a)}$ and $z_h^{(a)}$ from step \ref{algstep1}, $\phi_j$ from \eqref{eq:phifun}, $\eta$ from step \ref{algstep2} and $\dot{u}_h^{(a)}$ and $\dot{z}_h^{(a)}$ from step \ref{algstep3} \;
       	Def.\@ the $j$th column of $J_{F_h}$ as $\dot{F}_h[a]\eta$ \label{algstep4}
        }
    Comp.\@ the Newton update $d_k = -J_{F_h}^{-1}F_h(a)$\;
    Find the smallest integer $\kappa$ such that \eqref{eq:linesearchcrit} is satisfied \;
   $x_{k+1} \gets x_k + (0.5)^\kappa d_k$ \;
   Def.\@ $\alpha^{(k+1)}$ as the first $J_1$ and $\beta^{(k+1)}$ as the last $J_2$ entries of $x_{k+1}$\;
   Def.\@ $a$ with entries $\alpha_{m}$ of $\alpha^{(k+1)}$ and entries $\beta_n$ of $\beta^{(k+1)}$ as in \eqref{eq:atilde} \;
   $k \gets k+1$
   }
$a_h \gets a$
\caption{Reconstruction of the Robin parameter $\tilde{a}$ with $\Omega = [0,1]^2$}
\end{algorithm}

Finite element simulations were performed using the open-source computing platform FEniCS (see \cite{AlnaesEtal14, Dolfinx23, BasixJoss, ScroggsEtal2022}).
For simulating $q$ we apply the same finite element solver that we use later on to evaluate both $F_h$ as well as its Jacobi matrix with entries $\dot{F}_{h,j}$ from \eqref{eq:dotFh}.
However, we use a higher degree of polynomials and a finer mesh size for this computation and interpolate the finite element solution afterwards to the coarser grid, on which we perform the reconstruction. This is supposed to mitigate the risk of inverse crime.

For all examples we define the right hand side in \eqref{eq:robin-gen-ex} as
\begin{align*}
f(x,y) \, =\,  -10x\mathrm{e}^{\sin(4\pi y)}\, \quad \text{for } (x,y) \in \Omega \quad \text{and} \quad g \, = \, 0 \, \quad \text{on } \partial \Omega\, .
\end{align*}
Moreover, let the exact $\tilde{a}$ be defined by a linear combination of $\phi_{1,m-1}$ and $\phi_{2,n}$ from \eqref{eq:phifun} as in \eqref{eq:atilde} with vectors
\begin{align}\label{eq:alphabeta}
\alpha \, = \, [10,\, 1,\,-0.5,\,2,\, 1,\, -0.5]^\top \,  \quad \text{and } \quad
\beta \, = \, [0.2,\, 1,\, -0.5,\, 2,\, 1,\, -0.5]^\top \, .
\end{align}
We assume the domain $\omega$ to be the union of the two discs around the points $[0.8, \,0.8]^\top$ and $[0.4,\,0.2]^\top$ with radii $0.05$ and $0.1$.
This corresponds to a known subdomain with an approximate volume of $4\%$ of the domain $\Omega$.
For our finite element simulations we pick the finest mesh size $h\approx 7\times 10^{-4}$ to compute $q$.
The solution $u^{(\tilde{a})}$ on $\Omega$, as well as $q$ on $\omega \subset \Omega$ are depicted in the top left and top middle plot of Figure~\ref{fig:ex1}, respectively. 
We find (visually) that $u^{(\tilde{a})} >0$ on $\partial \Omega$, which fulfills the requirements of Theorem~\ref{thm:main}.

\textbf{Example 1.}
In our first numerical example we study the convergence of Newton's method visually.
For the reconstruction scheme, let $h\approx 3\times 10^{-3}$ be the maximal mesh size.
\begin{figure}[t!]
\centering 
\includegraphics[scale=.28]{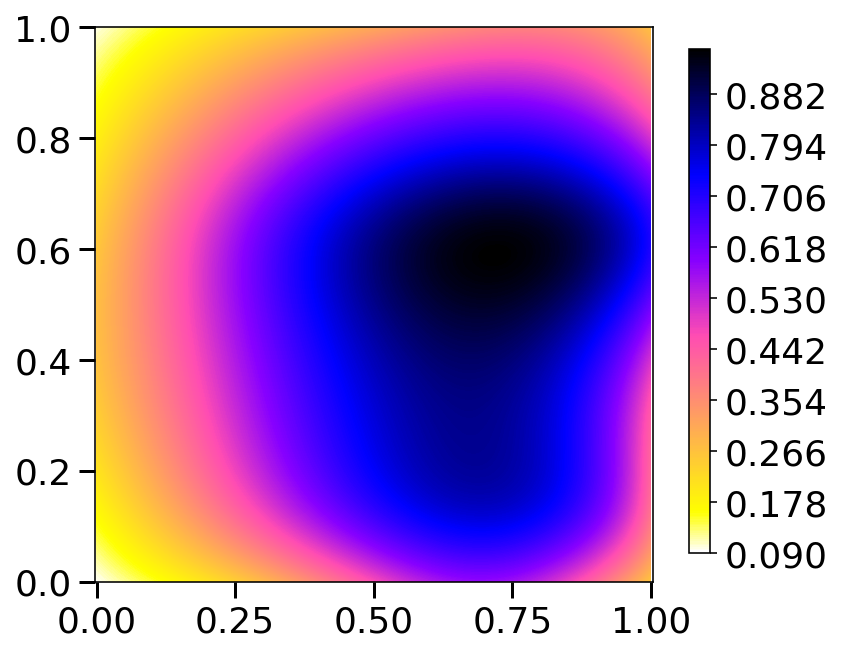}\hfill
\includegraphics[scale=.28]{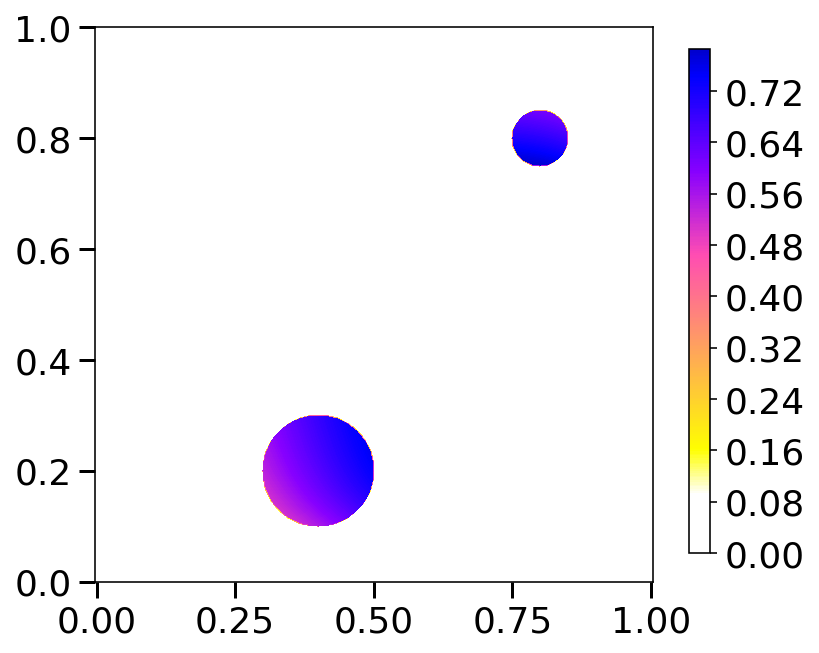}\hfill
\includegraphics[scale=.24]{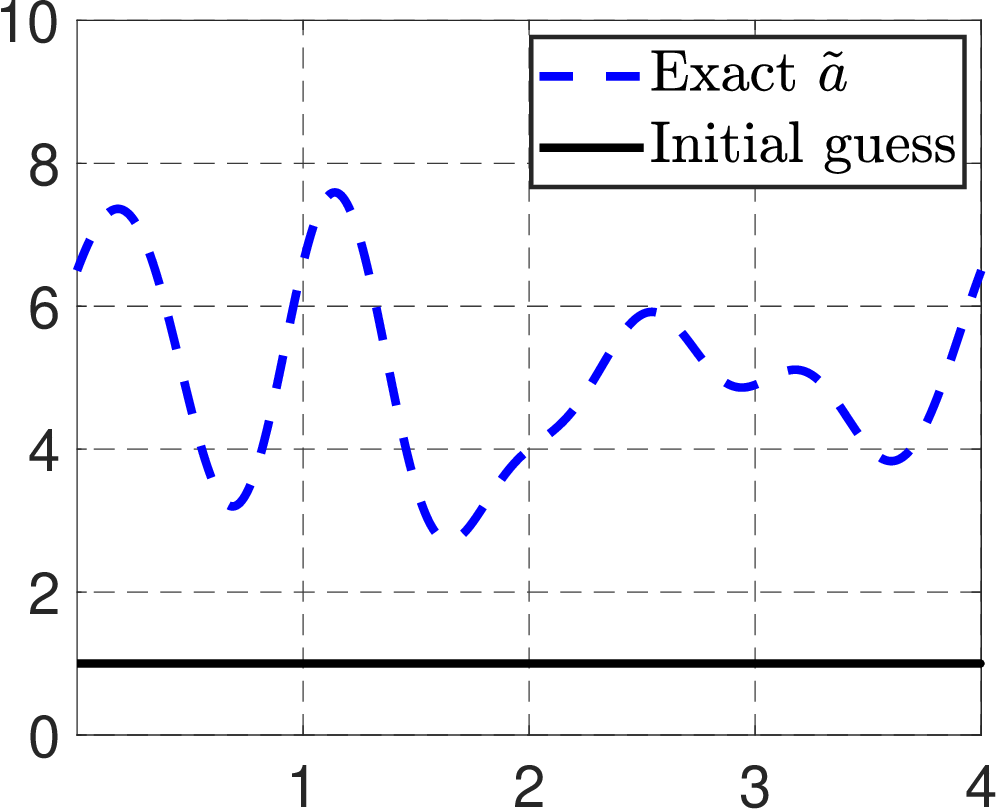}\hfill \\
\includegraphics[scale=.24]{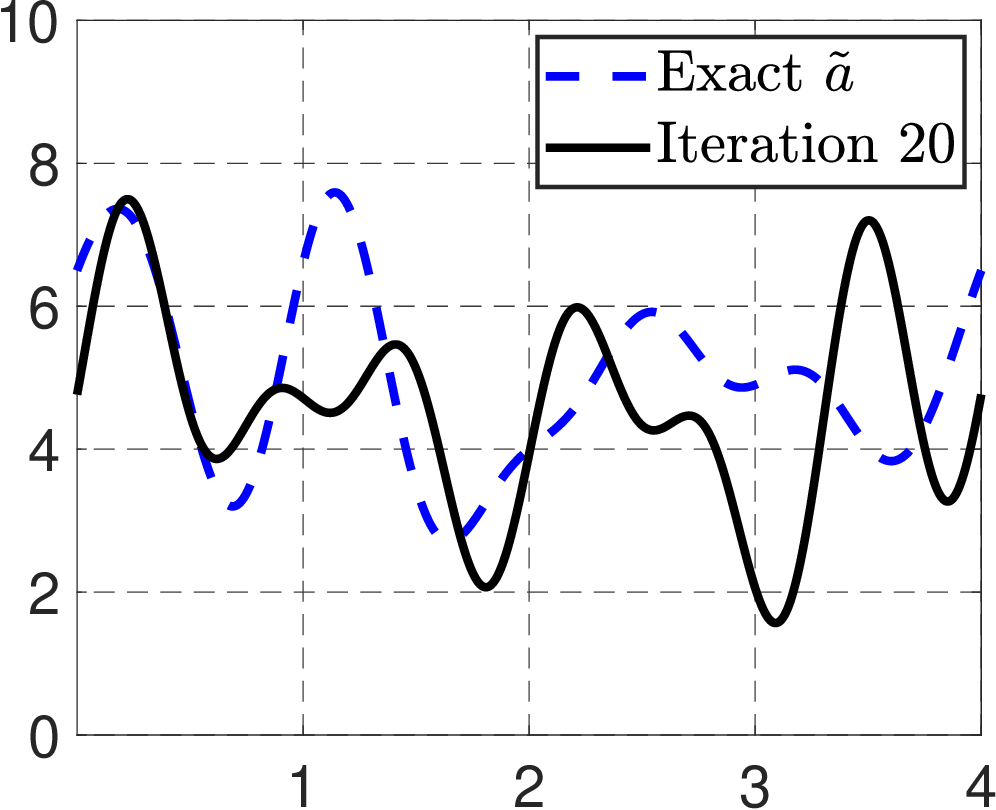}\hfill
\includegraphics[scale=.24]{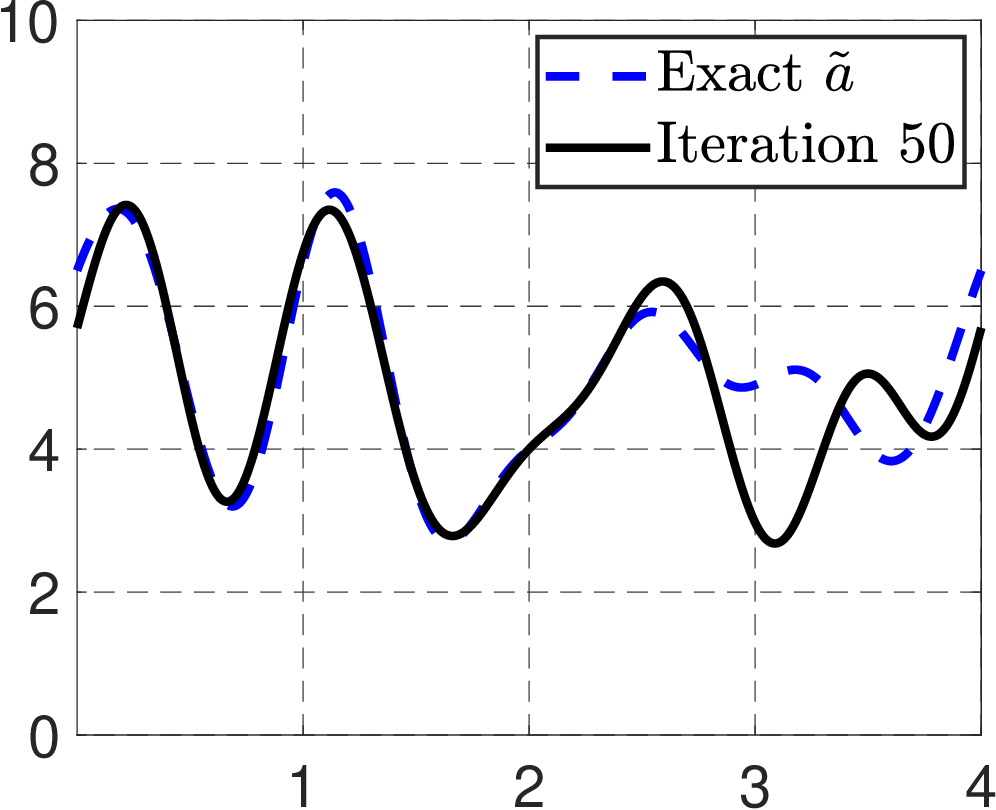}\hfill
\includegraphics[scale=.24]{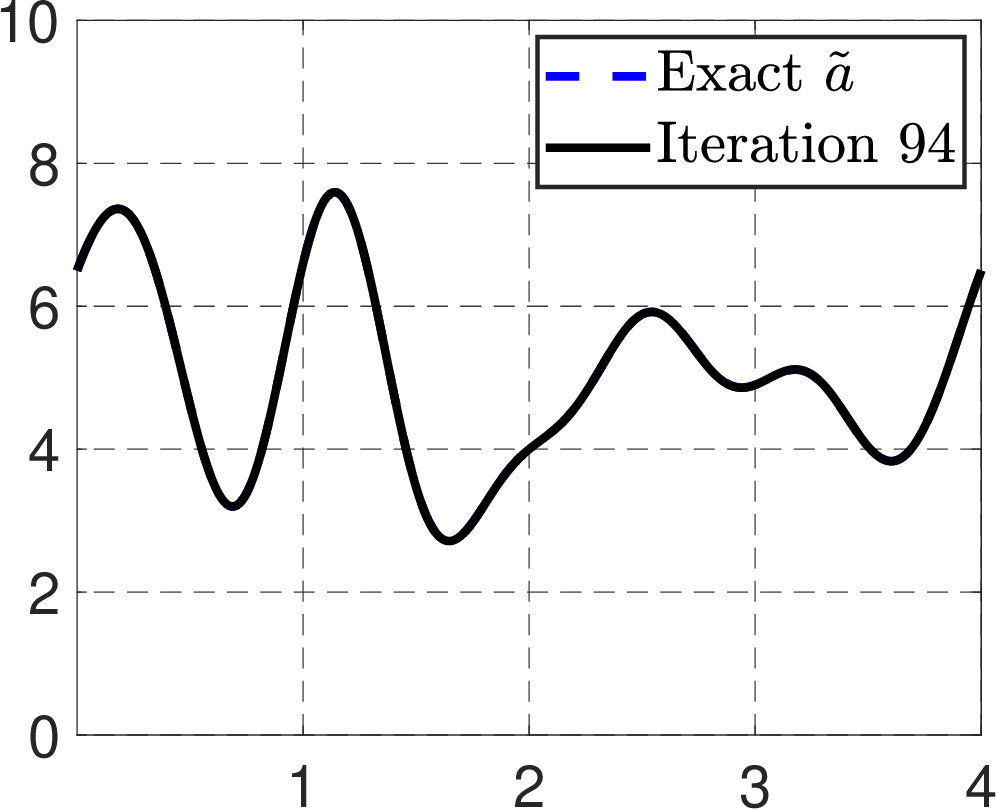}\hfill
\caption{Top left: Visualization of the exact solution on the domain $\Omega = [0,1]^2$. Top middle: Visualization of $q$ on the domain $\omega$. Top right to bottom right: The initial guess, the iterates 20, 50 and the final iteration 94. }
\label{fig:ex1}
\end{figure}
We start our reconstruction with a constant initial guess $a = 1$ as seen in the top right plot of Figure~\ref{fig:ex1}.
The bottom row in Figure~\ref{fig:ex1} displays some snapshot of the convergence history. The iteration stops after 94 steps with a flawless reconstruction of the true function $\tilde{a}$ as seen in the bottom right plot of Figure~\ref{fig:ex1}.
We emphasize that a successful reconstruction of $\tilde{a}$ depends on the initial guess of the Newton scheme, the provided data $q$ (in particular on the given set $\omega$) and on $\tilde{a}$ itself. 
In this example, e.g., if one disc of $\omega$ is removed, then the algorithm fails to converge to a reasonable approximation of $\tilde{a}$.

\textbf{Example 2.}
The purpose of this example is to verify \eqref{eq:h2bound} numerically.
We compute several reconstructions for different mesh sizes ranging from approximately $10^{-1}$ to $10^{-3}$.
For each simulation we start our reconstruction with the initial guess $a=1$. 
Subsequently we consider the final iterate of this simulation, denoted by $a_h$, and compute the error $\Vert \tilde{a} - a_h \Vert_{C^1(\partial \Omega)}$.
The corresponding double logarithmic plot showing the mesh size against the error is found in Figure~\ref{fig:ex2}.
Moreover, we provide in Figure~\ref{fig:ex2} a table that includes some mesh sizes, together with their corresponding error and the estimated order of convergence (EOC). Note that the values of the mesh size and the error are rounded values here.
\begin{figure}[t!]
\centering 
\begin{minipage}[t]{0.5\textwidth}
\adjustbox{valign=t,height=.27cm}{\includegraphics[scale=.38]{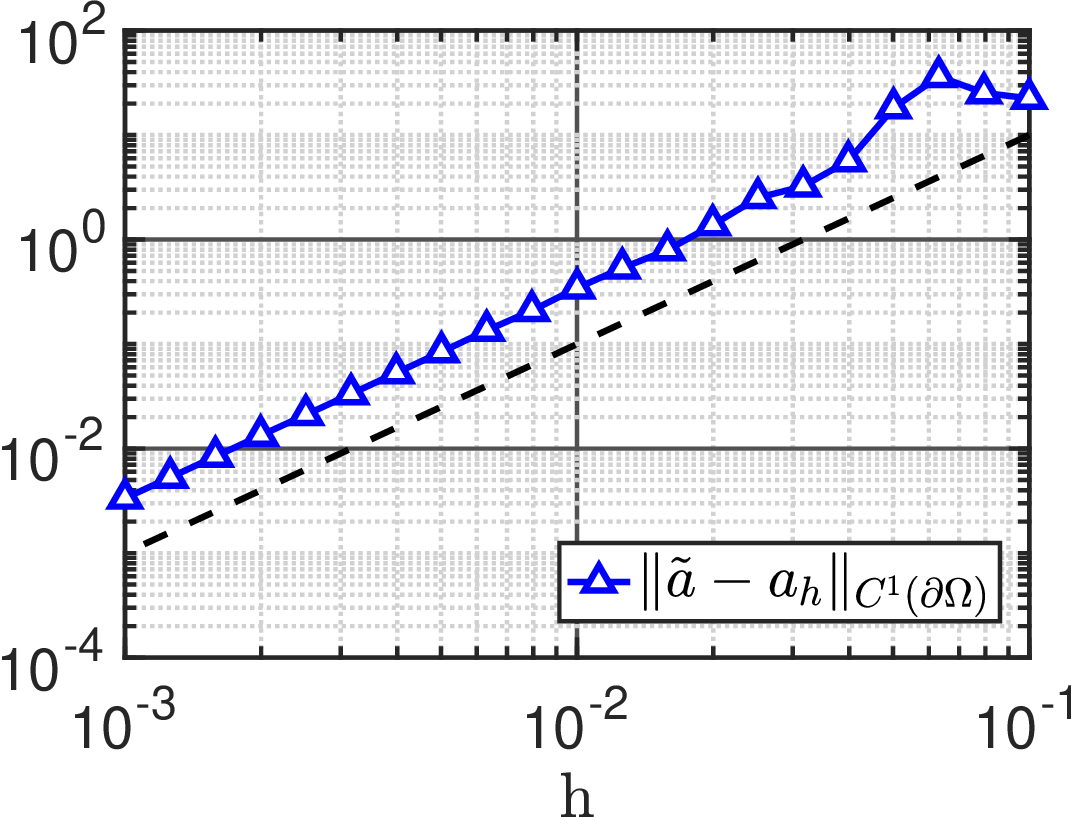}}
\end{minipage}
\hfill
\begin{minipage}[t]{0.45\textwidth}
        \adjustbox{valign=t,height=.28cm}{
\begin{tabular}{lll}
        \toprule
        $h$ & $\Vert \tilde{a} - a_h\Vert_{C^1(\partial \Omega)}$ & EOC \\
        \midrule
        \midrule
        $5.0\times 10^{-3}$    & $8.4\times 10^{-2}$     & $2.05$     \\
        $4.0\times 10^{-3}$    & $5.3\times 10^{-2}$     & $2.00$     \\
        $3.1\times 10^{-3}$    & $3.3\times 10^{-2}$    & $2.01$     \\
        $2.5\times 10^{-3}$    & $2.1\times 10^{-2}$     & $2.00$     \\
        $2.0\times 10^{-3}$    & $1.3\times 10^{-2}$     & $1.98$     \\
        $1.6\times 10^{-3}$    & $8.4\times 10^{-3}$     & $2.02$     \\
        $1.3\times 10^{-3}$   & $5.3\times 10^{-3}$     & $2.00$     \\
        $1.0\times 10^{-3}$   & $3.4\times 10^{-3}$     & $2.00$     \\
        \bottomrule
    \end{tabular}
    }
     \end{minipage}
\caption{Left: Double logarithmic plot showing the maximal mesh size $h$ against the error $\Vert \tilde{a}-a_h\Vert_{C^1(\partial \Omega)}$. The dashed line has slope 2.
Right: Some maximal mesh sizes together with their error and the EOC.}
\label{fig:ex2}
\end{figure}
We find that the predicted quadratic convergence that we expected from \eqref{eq:h2bound} is nicely attained. 

\textbf{Example 3.}
In this example we study the reconstruction of $\tilde{a}$ from noisy data $q^\delta$ given by $q^\delta = q + \delta$, where $\delta$ is some perturbation.
For some $\sigma>0$ we choose $\delta$ to be a Hadamard type function\footnote{Functions defined as $\delta(x) = \mathrm{e}^{\mathrm{i}x\cdot z}$ with $z=a+\mathrm{i}b$, $a,b \in \R^2$, $|a|=|b|$ and $a\cdot b=0$ are harmonic functions in $\Omega$.} defined as $\delta = \delta(\sigma) = \sigma(\delta_1 + \delta_2)$ with $$\delta_i(x) \, =\, \operatorname{Re}(\mathrm{e}^{\mathrm{i} (x-c_i) \cdot z})1_{B_{c_i}(x_i)}(x)\, ,$$ where $x_i$ and $c_i$ are the respective middle points and radii of the circles that $\omega$ consists of.
In our example, we choose $z=[10, \, 10i]^\top$.
We perform several simulations for a variety of mesh sizes $h$ ranging from $10^{-1}$ to $10^{-3}$ with noise levels $\sigma \in \{10^{-4},\,  10^{-5},\,  10^{-6}\}$. For each reconstruction we start with the initial guess $a=1$ and compute the error $\Vert \tilde{a} - a_h^{\delta(\sigma)}\Vert_{C^1(\partial \Omega)}$, where $a_h^{\delta(\sigma)}$ denotes the final iteration corresponding to a given mesh size $h$ and noisy data $\delta(\sigma)$.
In Figure~\ref{fig:ex3} we visualize the result of this experiment. 
\begin{figure}[t!]
\centering 
\begin{minipage}[t]{0.45\textwidth}
\adjustbox{valign=t,height=.27cm}{\includegraphics[scale=.38]{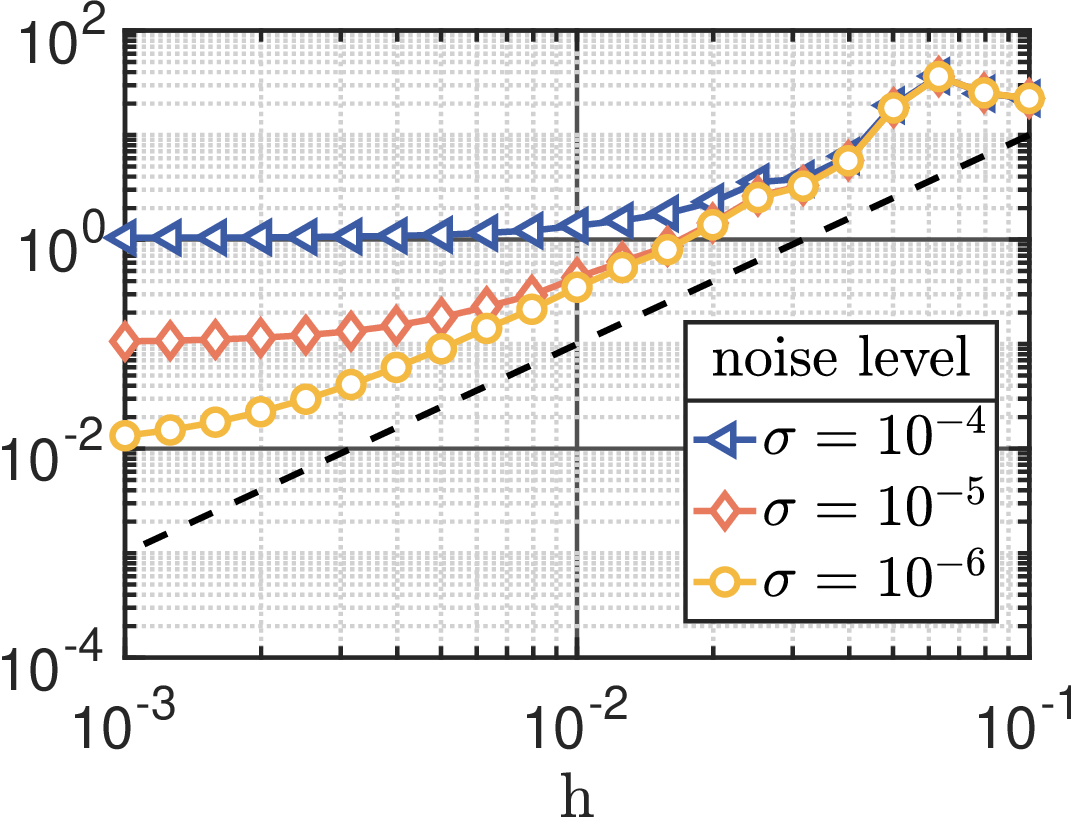}}
\end{minipage}
\hfill
\begin{minipage}[t]{0.45\textwidth}
\adjustbox{valign=t,height=.27cm}{\includegraphics[scale=.355]{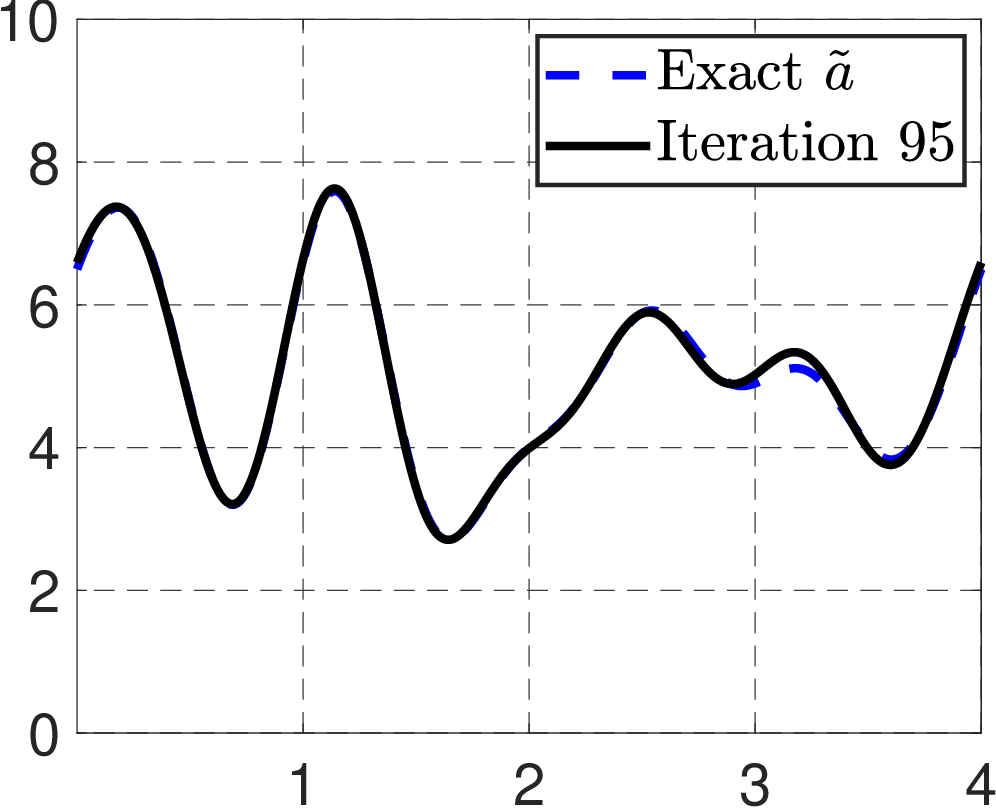}}
\end{minipage}
\caption{Left: Double logarithmic plot showing the maximal mesh size $h$ against the error $\Vert \tilde{a}-a_h^{\delta(\sigma)}\Vert_{C^1(\partial \Omega)}$ for three different noise levels $\sigma$. The dashed line has slope 2.
Right: The final iterate of the simulation corresponding to $h=10^{-3}$ and $\sigma = 10^{-4}$ together with the exact Robin parameter $\tilde{a}$.
}
\label{fig:ex3}
\end{figure}
Its left plot is double logarithmic and shows the mesh size $h$ against the error $\Vert \tilde{a} - a_h^{\delta(\sigma)}\Vert_{C^1(\partial \Omega)}$.
We find that the result is in accordance with the bound \eqref{eq:h2boundpert}.
The error decays quadratically, until the term $\Vert \delta \Vert_{L^2(\omega)}$ dominates the error bound.
In the right plot of Figure~\ref{fig:ex3} we see the final iteration for $h=10^{-3}$ and noise level $\sigma = 10^{-6}$.
Even though the error stagnates around the value $1$ for this noise level, the approximation $a_h^\delta$ visually only differs around $t=3$ (which corresponds to the top-left corner of $\Omega$) from the exact $\tilde{a}$.

\textbf{Example 4.}
For this example we consider an approximation space for the reconstruction algorithm that is smaller than the exact space $V_{J_1,J_2} $.
Since $\tilde{a}$ from \eqref{eq:atilde} lies in $V_{6,6}$, but not in any subspace $V_{j_1,j_2}$ with $j_1<J_1$ and $j_2<J_2$, we expect that the final iterate of the reconstruction algorithm would not capture the true features of the Robin parameter $\tilde{a}$.
For our experiment we let the tuple $(j_1, j_2)$ lie in the set $\{ (3,3),\, (4,4),\, (5,5) \}$ and start our reconstruction starting from the constant initial guess $a=1$.
The final iterates for each simulation can be found in Figure~\ref{fig:ex4}.
\begin{figure}[t!]
\centering 
\includegraphics[scale=.24]{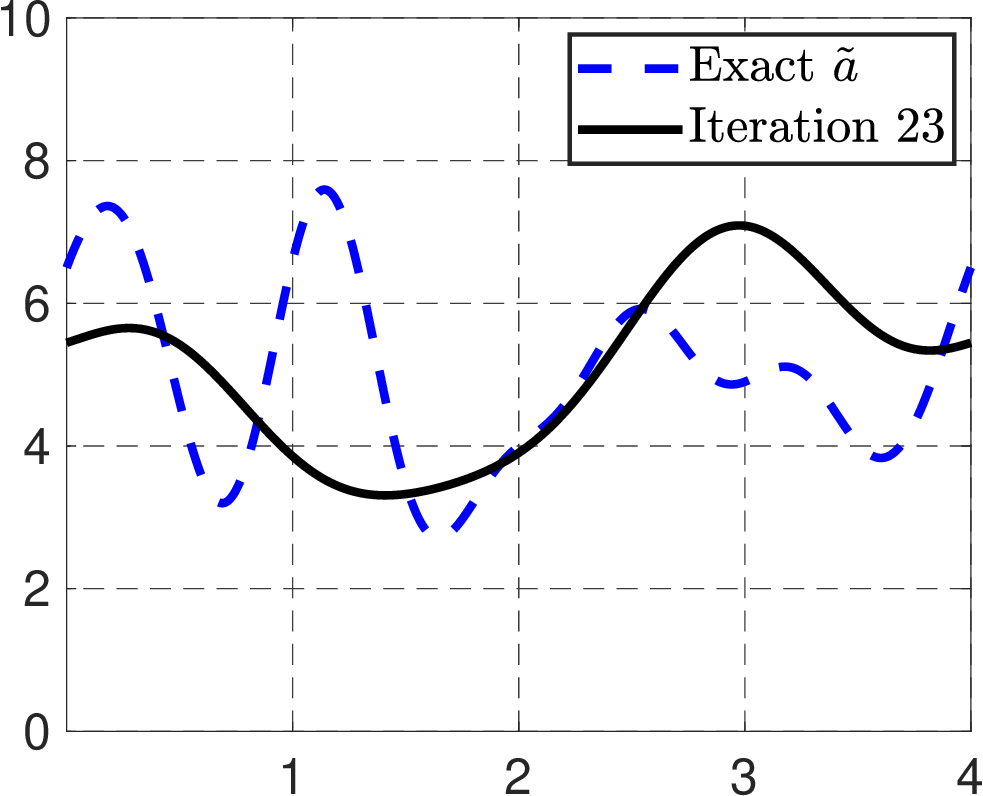}\hfill
\includegraphics[scale=.24]{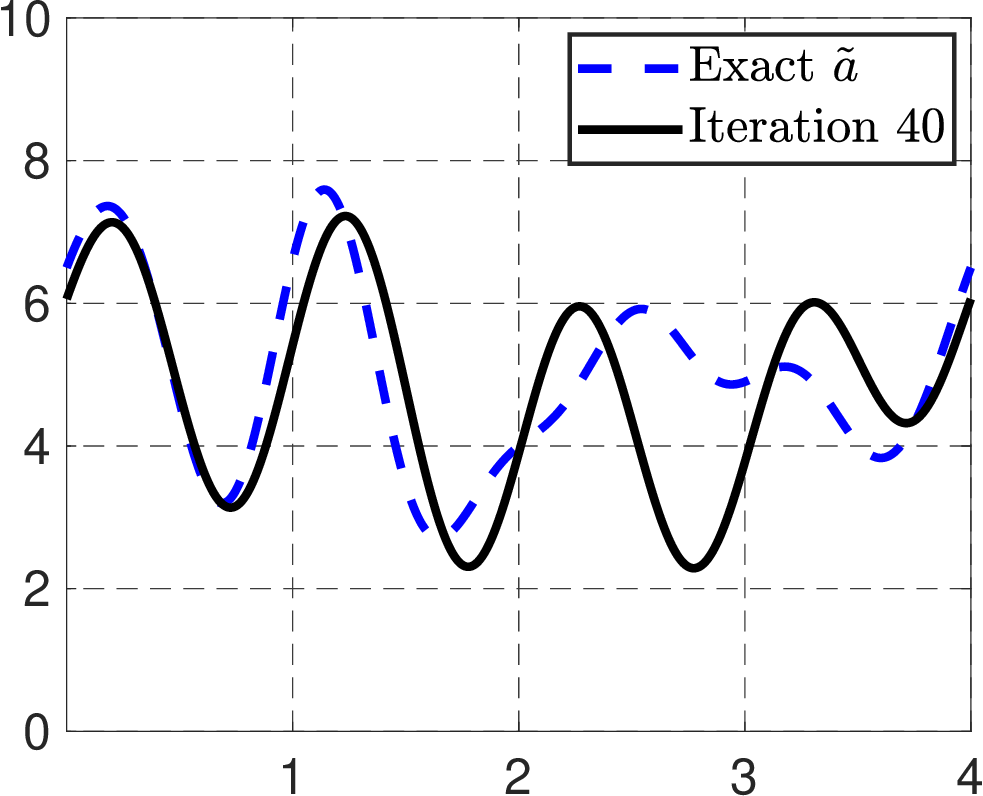}\hfill
\includegraphics[scale=.24]{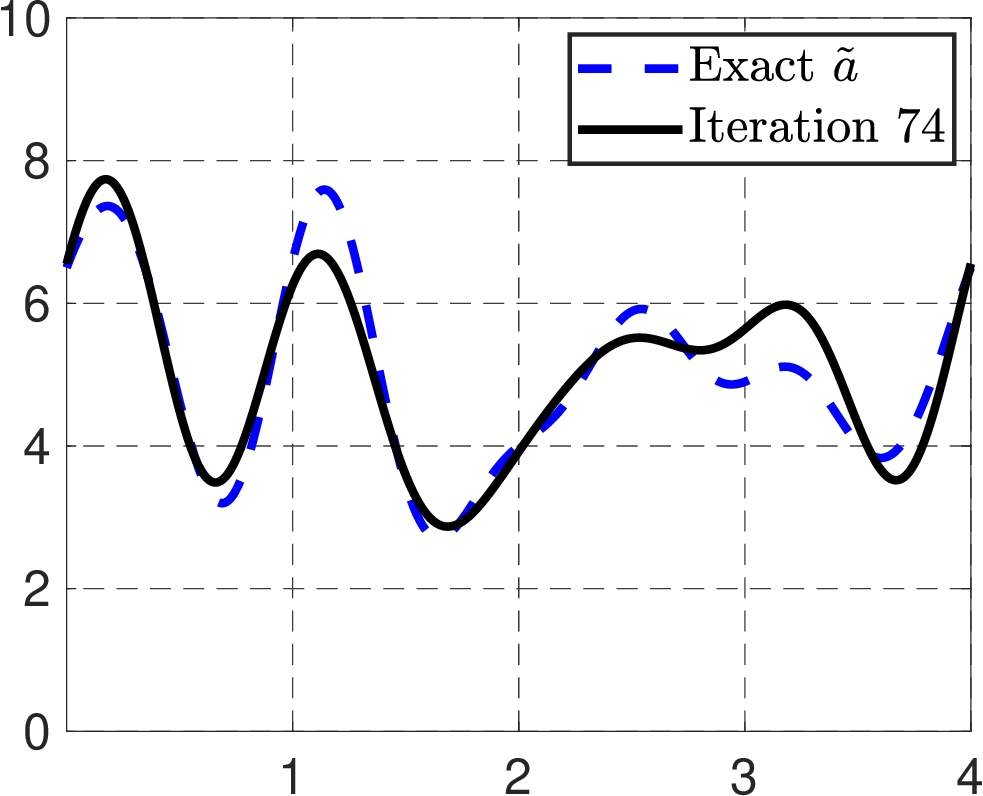}\hfill
\caption{Final iterates for smaller approximation spaces. The approximation space for the reconstruction scheme is
spanned by $\phi_{1,m-1}$ and $\phi_{2,n}$ from \eqref{eq:phifun} with $1\leq m\leq j_1$ and $1 \leq n \leq j_2$ for $(j_1,j_2)$ equal to $(3,3)$ (left), $(4,4)$ (middle) and $(5,5)$ (right).
}
\label{fig:ex4}
\end{figure}
For increasing $j_1$ and $j_2$ we see that the final iterates get closer to the true Robin parameter.
In particular, the final iterate for $(j_1,j_2)=(5,5)$ found in the right plot of Figure~\ref{fig:ex4} already shares most features of $\tilde{a}$.
We note that the approximation in the smaller approximation space does not coincide with the projection of $\tilde{a}$ onto these spaces.

\textbf{Example 5.}
In our final example we study the condition number of the matrix $J_{F_h}$ at $\tilde{a}$ from step \ref{algstep4} in Algorithm~\ref{algo:newton} numerically.
This matrix is a discrete version of the Fr\'echet derivative $\dot{F}_h[\tilde{a}]$ from \eqref{eq:dotFh}.
We consider the vectors $\alpha$ and $\beta$ from \eqref{eq:alphabeta} and prolong both with two more entries, respectively, which are all $1$.
For this example exclusively, we compute $q$ by using a first order finite element approximation and the same mesh, as for the computation of $J_{F_h}$.
The finest mesh size in this case is $h\approx 7\times 10^{-4}$.
Starting with the dimensions $J_1=J_2=2$ we successively first increase $J_1$ and then $J_2$ until $J_1=J_2=8$ is reached, i.e., until the finite dimensional subspace $V_{J_1,J_2}$ has 16 dimensions.
For each dimension from $4$ to $16$ we compute the condition number $\kappa(J_{F_h})$.
The result is found in Figure~\ref{fig:ex5}.
We see that the condition number increases exponentially as the dimension $J = J_1+J_2$ grows.
The proof of Lemma~\ref{lem:Fh} requires Corollary~\ref{cor:dotuh}, which, on the other hand, uses the stability result from Theorem~\ref{thm:stability}.
In the proof of Theorem~\ref{thm:stability} the norm equivalence on finite dimensional spaces has been used.
We assume that the norm equivalence is reflected in the exponential growth of the condition number of $J_{F_h}$ in Figure~\ref{fig:ex5}.
\begin{figure}[t!]
\centering 
\includegraphics[scale=.3]{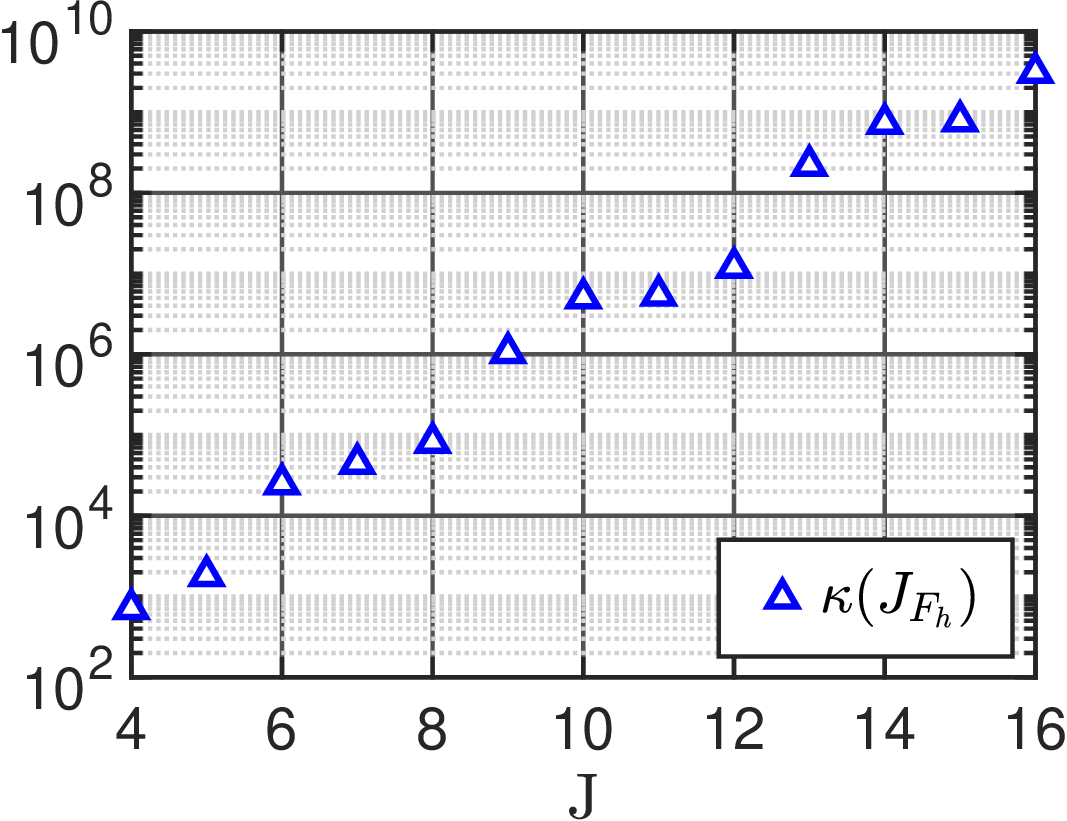}
\caption{Condition number of the Jacobian $J_{F_h}$ from step \ref{algstep4} in Algorithm~\ref{algo:newton} with respect to different finite dimensions $J=J_1+J_2$ of the subspace $V_{J_1,J_2}$.}
\label{fig:ex5}
\end{figure}

\bibliographystyle{amsplain}
\bibliography{references}

\end{document}